\newcommand{\bbb}[1]{\indent$\bullet$ #1\\}
\definecolor{LightCyan}{rgb}{0.88,1,1}
\definecolor{Red}{rgb}{1,0,0}
\definecolor{Green}{rgb}{0,1,0}
\definecolor{Lgray}{rgb}{0.75, 0.75, 0.75}
\newtheorem{theorem}{Theorem}[section]
\newtheorem{lemma}[theorem]{Lemma}
\newtheorem{proposition}[theorem]{Proposition}
\newtheorem{corollary}[theorem]{Corollary}
\theoremstyle{definition}
\newtheorem{example}[theorem]{Example}
\newtheorem{remark}[theorem]{Remark}
\newtheorem{notation}[theorem]{Notation}
\def\C{{\mathbb C}}
\def\Z{{\mathbb Z}}
\def\R{{\mathbb R}}
\def\N{{\mathbb N}}
\def\P{{\mathbb P}}
\def\F{{\mathbb F}}
\def\Osh{{\mathcal O}}
\def\Pic{\operatorname{Pic}}
\def\Fix{\operatorname{Fix}}
\def\rk{\mbox{rank}}
\def\tr{\operatorname{tr}}
\begin{document}

\title[Automorphisms of K3's with one-dimensional moduli space]
{Non-symplectic automorphisms of K3 surfaces with one-dimensional moduli space}

\author{Michela Artebani}
\address{
Departamento de Matem\'atica, \newline
Universidad de Concepci\'on, \newline
Casilla 160-C,
Concepci\'on, Chile}
\email{martebani@udec.cl}

\author{Paola Comparin}
\address{
Departamento de Matem\'atica y Estad\'istica, \newline
Universidad de la Frontera, \newline
Temuco, Chile}
\email{paola.comparin@ufrontera.cl}

\author{Mar\'ia Elisa Vald\'es}
\address{
Departamento de Matem\'atica, \newline
Universidad de Concepci\'on, \newline
Casilla 160-C,
Concepci\'on, Chile}
\email{mariaevaldes@udec.cl}

\subjclass[2000]{Primary 14J28; Secondary 14J50, 14J10}
\keywords{K3 surfaces, automorphisms} 
\thanks{The first and last author have been partially 
supported by Proyecto FONDECYT Regular 
N. 1160897 and N. 1211708, the second author has been partially supported by  
Proyecto FONDECYT Iniciaci\'on en Investigaci\'on N. 11190428,
all authors have been supported by  
Proyecto Anillo ACT 1415 PIA CONICYT}

\begin{abstract}
The moduli space of K3 surfaces $X$ with a purely non-symplectic 
automorphism $\sigma$ of order $n\geq 2$ is one dimensional exactly when $\varphi(n)=8$ or $10$. 
In this paper we classify and give explicit equations for 
the very general members $(X,\sigma)$ of the irreducible components of maximal dimension
of such moduli spaces. In particular we show that there is a unique one-dimensional component for $n=20,22, 24$,
three irreducible components for $n=15$ and  two components in the remaining cases. 
\end{abstract}

\maketitle
\tableofcontents

\section{Introduction} 
An automorphism $\sigma$ of finite order $n\geq 2$ of a complex K3 surface 
$X$ is purely non-symplectic if $\sigma^*(\omega_X)=\zeta_n\omega_X$, where
$\omega_X$ is a nowhere vanishing holomorphic $2$-form of $X$ and
$\zeta_n$ is a primitive $n$th root of unity. 
By \cite[Main Theorem 3]{MO} there exists one such pair $(X,\sigma)$ 
if and only if $n$ belongs to the set $TV_{K3}=\{n\in \N-\{60\}| \varphi(n)\leq 20\}$.

The structure of the moduli space of such K3 surfaces 
can be described by means of the Global Torelli theorem  
and the surjectivity theorem for periods of K3 surfaces (see \cite[\S 11]{DK}). 
In particular it is known that an irreducible component of 
the moduli space of pairs $(X,\sigma)$ for $n\geq 3$ 
is an arithmetic quotient of a Zariski open subset of 
a complex ball  of dimension $\dim(V^{\sigma})-1$,
where $V^{\sigma}$ is the $\zeta_n$-eigenspace of $\sigma^*$ in $H^2(X,\C)$.

In this paper we consider the orders $n$ such that the moduli space of 
 K3 surfaces carrying a purely non-symplectic automorphism of order $n$ is one dimensional.
 We show that the orders $n$ with such property, as expected, are exactly those $n\in TV_{K3}$
 with $\varphi(n)=8$ or $10$, i.e. $11, 15, 16, 20, 22, 24$ and $30$ (see \cite{MO}). 
For all these values of $n$ we classify pairs $(X,\sigma)$ such that $\dim(V^\sigma)=2$, i.e. 
we identify the fixed locus of $\sigma$ and of its powers, determine
the dimensions of the eigenspaces of $\sigma^*$ in $H^2(X,\C)$ 
and compute the N\'eron-Severi lattice of a very general pair.
 The orders $n=11$ and $n=16$ had been previously studied in \cite{OZ2, ArtebaniSartiTaki} 
and \cite{AlTabbaaSartiTaki} respectively.
We collect these results in the following Theorem.

\begin{theorem}\label{table2}
 Let $X$ be a complex K3 surface with a purely non-symplectic automorphism $\sigma$  
of order $n\geq 2$ such that $\varphi(n)=8$ or $10$ and $\dim(V^{\sigma})=2$. 
Then Table \ref{resumen} provides all possible values for the vector $d$ describing the dimensions of the eigenspaces 
of $\sigma^*$ in $H^2(X,\C)$, the topological invariants describing the fixed locus of powers of $\sigma$ (see Section \ref{background}
for the notation) and the N\'eron-Severi lattice of a very general K3 surface in each case. Moreover, all cases in the table exist.
\end{theorem}

 {\scriptsize
 \begin{table}[h!!]
\centering
\begin{tabular}{c|c|c|c|ccc|c}
&$n$&$d$&$i$&$g_i$&$k_i$&$N_i$ & {\rm NS}\\
\hline\hline
11a&11&$(2,2)$&11&1&0&2 & $U$ \\
\hline
11b&11&$(2,2)$&11&-&-&2 & $U(11)$\\
\hline
22&22&$(2,0,0,2)$&	22&-&-&6 & \\
&&	&	11&1&0&2 & $U$\\
&&		&	2&10&1&0 & \\
\hline
15a&15&$(2,1,0,2)$&	15&-&-&5 &\\
&&			&			5&2&0&1 & $U(3)\oplus A_2\oplus A_2$ \\
&&			&			3&2&0&2\\
\hline
15b&15&$(2,0,1,4)$&	15&-&-&7 & \\
&&			&			5&1&0&4 & $H_5\oplus A_4$\\
&&			&			3&4&1&1 &\\
\hline
15c&15&$(2,0,2,2)$&	15&-&-&4 & \\
&&			&			5&1&0&4 & $H_5\oplus A_4$\\
&&			&			3&4&0&0 &\\
\hline
30a&30&$(2,0,1,0,0,0,1,1)$&	30&-&-&1\\
&&&				15&-&-&5\\
&&	&			5&2&0&1 & $U(3)\oplus A_2\oplus A_2$\\
&&	&			3&2&0&2\\
&&	&			2&10&0&0\\
\hline
30b&30&$(2,0,0,1,0,0,1,3)$&	30&-&-&3\\
&&	&			15&-&-&7\\
&&	&			5&1&0&4 & $H_5\oplus A_4$\\
&&	&			3&4&1&1\\
&&	&			2&9&1&0\\
\hline
16a&16&$(2,0,0,0,6)$&	16&0&0&6\\
&&	&		8&0&0&6 & $U\oplus D_4$\\
&&	&		4&0&0&6\\
&&	&		2&7&2&0\\
\hline
16b&16&$(2, 0, 0, 2, 4)$ &	16&-&-&4\\
&&	&		8&0&0&6 & $U(2)\oplus D_4$\\
&&	&		4&0&0&6\\
&&	&		2&6&1&0\\
\hline
20&20&$(2,0,1,0,0,2)$&		20&-&-&3\\
&&	&		10&-&-&7\\
&&	&		5&2&0&1 & $U(2)\oplus D_4$\\
&&	&		4&0&0&6\\
&&	&		2&6&1&0\\
\hline
24&24&$(2,0,0,0,0,1,0,4)$&		24&-&-&5\\
&&	&		12&-&-&5\\
&&	&		6&0&0&11 & $U\oplus D_4$\\
&&	&		3&4&1&1\\
&&	&		2&7&2&0\\
\end{tabular}
\vspace{0.2cm}

\caption{Non-symplectic automorphisms with $\varphi(n)=8,10$}
\label{resumen}
\end{table}
}

This classification allows us to prove the following result, which provides 
explicit birational models for a very general pair $(X,\sigma)$
under the previous conditions (see Remark \ref{nsvg} about the generality assumption in the statement).

\begin{theorem}
\label{main}
Let $X$ be a very general complex K3 surface with a purely non-symplectic automorphism $\sigma$  
of order $n\geq 2$ such that $\varphi(n)=8$ or $10$ and $\dim(V^{\sigma})=2$, then up to a birational 
isomorphism $(X,\sigma)$ belongs to the  
families described in Table \ref{1dim}, where $a\in \C$ is a parameter, $\zeta_n$ denotes a primitive $n$th root of unity and  
$(*)$ means: minimal resolution of a degree $11$ covering  of a principal
 homogeneous space of order  $11$ of the rational elliptic surface $y^2=x^3+x+t$ (see Example \ref{ex11b}).
 
{\small
\begin{table}[h!]
\begin{center}
\begin{tabular}{c|l|ll}
$n$ &  $X$ & $\sigma$  \\[2pt]

\hline\hline
&&&\\[-1pt]

\multirow{2}{*}{11}  & a)\quad $y^2=x^3+ax+(t^{11}-1)$&$(x,y,\zeta_{11}t)$ \\[2pt]
& b)\quad (*)&\\[5pt]

\hline
&&&\\[-1pt]

\multirow{2}{*}{15}& a)\quad $y^2=x^3+(t^5-1)(t^5-a)$ & $(\zeta_3 x,y,\zeta_5t)$ \\[2pt]
& b)\quad $y^2=x_0^6+ x_0x_1^5+x_2^6+ax_0^3x_2^3$ & $(x_0,\zeta_5x_1,\zeta_{3}x_2,y)$\\[2pt] 
& c)\quad $y^3=x_0^5x_1+x_1^2x_2^2+x_1^4x_2+ax_1^6$ & $(\zeta_5 x_0,x_1,x_2,\zeta_3 y)$\\[5pt]

\hline
&&&\\[-1pt]

\multirow{2}{*}{16}& a)\quad $y^2=x^3+t^2x+at^3(t^8+1)$ & $(\zeta_{16}^2 x,\zeta_{16}^3y,\zeta_{16}^2t)$   \\[2pt]
 &b)\quad  $y^2=x_0(x_0^4x_2+x_1^5+x_1x_2^4+ax_1^3x_2^2)$ & $(x_0,\zeta_{8}^7 x_1,\zeta_{8}^3x_2,\zeta_{16}^3y)$  \\[5pt]
\hline
&&&\\[-1pt]

20 & $y^2=x_0(x_1^5+x_2^5+x_0^2x_2^3+a x_0^4x_2)$ & $(-x_0,\zeta_5x_1,x_2,iy)$  \\[5pt]

\hline
&&&\\[-1pt]

22 & $y^2=x^3+ax+(t^{11}-1)$ & $(x,-y,\zeta_{11}t)$  \\[5pt]

\hline
&&&\\[-1pt]
24& $y^2=x^3+t(t^4-1)(t^4-a)$ &$(\zeta_{12}x,\zeta_8y,it)$  \\[5pt]

\hline
&&&\\[-1pt]

\multirow{2}{*}{30} & a) \quad $y^2=x^3+(t^5-1)(t^5-a)$ &$(\zeta_3 x,-y,\zeta_5t)$ \\[2pt]
 &  b)\quad $y^2=x_0^6+ x_0x_1^5+x_2^6+ax_0^3x_2^3$ & $(x_0,\zeta_5x_1,\zeta_{3}x_2,-y)$ \\[2pt]
\end{tabular}
   \vspace{0.2cm}
   \caption{One dimensional families of $K3$ surfaces with non-symplectic automorphisms}\label{1dim} 
\end{center}
\end{table}
}
 \end{theorem}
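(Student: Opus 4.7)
The plan is to treat the seven orders $n \in \{11,15,16,20,22,24,30\}$ case by case, combining lattice-theoretic constraints on $\sigma^*$ acting on $H^2(X,\Z)$ with explicit geometric constructions. For $n=11,16$ the result essentially reduces to previously published work, so the new content is concentrated in the five remaining orders. For each $n$ I would proceed in three coordinated steps: first pin down the numerical invariants (the dimension vector $d$ and the fixed locus data of $\sigma$ and its powers), then realize a candidate $(X,\sigma)$ by an explicit model, and finally match the two to show the candidate is the very general member.

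\emph{Step 1: eigenspaces and fixed loci.} Since $H^2(X,\Z)$ is $\sigma^*$-invariant of rank $22$, the characteristic polynomial of $\sigma^*$ factors as a product of cyclotomic polynomials $\Phi_d$, $d\mid n$, each contributing a multiple of $\deg\Phi_d$ to the total rank. The hypotheses $\varphi(n)\in\{8,10\}$ and $\dim V^\sigma=2$ fix the multiplicity of $\Phi_n$ at $2$, and a short arithmetic enumeration, together with the signature $(3,19)$ of $H^2(X,\R)$ and the action of $\sigma^*$ on the transcendental and algebraic parts, leaves only a small list of admissible dimension vectors $d$ for each $n$. For each admissible $d$ I would then apply the topological Lefschetz fixed point formula and the holomorphic Lefschetz formula (for $\sigma$ and each nontrivial power $\sigma^k$) to determine the numerical structure of $\Fix(\sigma^k)$: the number of isolated fixed points, the number and genera of fixed curves, the local linearizations at isolated points. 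This is a routine but finite computation, and it is carried out simultaneously for all powers because $\sigma^k$ is again a non-symplectic automorphism (possibly of lower order) whose invariants must be compatible via $\Fix(\sigma^k)\supset\Fix(\sigma)$.

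\emph{Step 2: explicit models and verification.} For each surviving numerical case I would exhibit a one-parameter family in Table~\ref{1dim}: elliptic fibrations $y^2=x^3+p(t)x+q(t)$ with $\sigma$ acting by multiplying $t,x,y$ by suitable roots of unity (for $n=11,15\mathrm{a},16\mathrm{a},22,24,30\mathrm{a}$), or weighted-projective double covers of $\P^2$ or $\P(1,1,2,3)$-type surfaces (for $n=15\mathrm{b},16\mathrm{b},20,30\mathrm{b}$). The fixed locus of $\sigma$ is read off directly from the coordinates, and I would confirm that it matches the numerical data computed in Step 1. The N\'eron-Severi lattice of a very general member is computed from the elliptic fibration by adding the fiber, the zero section, the Mordell--Weil contribution and the reducible fibers (Shioda--Tate), or by exhibiting enough explicit classes and comparing ranks against $22-\dim V^\sigma-\dim\overline{V^\sigma}$.

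The main obstacle is \emph{completeness}: ruling out additional irreducible components of maximal dimension for each $n$, and for $n=15,22$ showing that every pair $(X,\sigma)$, not merely the very general one, is isomorphic to a member of the listed family. This requires showing that the invariant lattice $S(\sigma)=H^2(X,\Z)^{\sigma^*}$ is uniquely determined by the numerical data (via genus theory for lattices and Nikulin's primitive embedding results), so that the corresponding period domain $D^\sigma$ is irreducible, and then that the explicit model realizes the whole period domain. For $n=15$ and $22$, the additional rigidity of the transcendental lattice under the $\sigma^*$-action forces every member to be of the stated form, and this uniqueness argument, together with the lattice identification, is the technical heart of the proof.
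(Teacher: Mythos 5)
Your Steps 1 and 2 (eigenspace decomposition, topological and holomorphic Lefschetz formulas for $\sigma$ and its powers, explicit elliptic-fibration and double-cover models) coincide with what the paper does, and you correctly identify completeness as the hard part. But your proposed resolution of that part has a genuine gap. You argue: show $S(\sigma)$ is determined by the numerical data via genus theory and Nikulin's embedding results, conclude that $\mathcal D^\sigma$ is irreducible, then show the explicit family fills it. The middle inference does not follow. The components of the moduli space of pairs $(X,\sigma)$ correspond to conjugacy classes of the full representation $\rho:C_n\to O(L_{K3})$, not to isometry classes of the invariant lattice; two non-conjugate actions can have isometric invariant lattices and even the same eigenvalue multiplicities, so pinning down $S(\sigma)$ up to isometry does not yield irreducibility, and you give no mechanism for classifying $\rho$ up to conjugacy. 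You would also still need to prove that the one-parameter family dominates the component, which you assert rather than argue.

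The paper avoids this entirely by making the classification geometric and forced at every step. From the fixed-locus data it extracts a canonical projective model: a genus-$2$ curve fixed by a power of $\sigma$ gives a degree-two map to $\P^2$ whose branch sextic must be invariant and must avoid the fixed lines of the induced projectivities (orders $15$b, $20$, $16$b, $30$b), or a copy of $U$, $U\oplus D_4$, or the lattice from \cite{ArtebaniSarti} inside ${\rm NS}(X)$ gives a jacobian elliptic fibration (orders $11$, $22$, $15$a, $16$a, $24$, $30$a). The crucial intermediate step you are missing is the invariance of the fibration under the full automorphism, which the paper gets from the intersection-theoretic bound of \cite[Lemma 5]{ArtebaniSarti4} forcing $f\cdot\sigma^*(f)=0$; after that the action on the Weierstrass model is diagonal and the equation is determined by invariance of the coefficients. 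Two further ingredients absent from your plan: the doubling results of Proposition \ref{gs} (needed to force $\Fix(\sigma_{11})$ to contain a curve when $n=22$, and to produce the order-$30$ automorphisms from the order-$15$ ones), and the case-by-case geometric exclusions (e.g.\ for $n=15$, $d=(2,0,2,2)$ is eliminated by analyzing the quotient quadric and the distribution of type $II$ fibers, not by any Lefschetz count). Without these, several spurious numerical cases survive and the completeness claim is not established.
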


\begin{corollary}
The moduli space of K3 surfaces carrying a purely non-symplectic automorphism of order $n$ has 
a unique one-dimensional component for $n=20,22, 24$,
three irreducible components for $n=15$ and  two irreducible components for $n=11,16, 30$.
\end{corollary}

For orders  $22, 15, 30$ and $20$ we actually prove a stronger version of Theorem \ref{main}, 
since we provide projective models without assuming $X$ to be very general.

Finally, in case $n=22$ and $n=15$ we classify purely non-symplectic automorphisms 
of order $n$, that is we provide the same type of  information contained in Table \ref{resumen},  
without assuming $\dim(V^\sigma)=2$, see Theorem \ref{thm22} and Theorem \ref{thm15}.

The structure of the paper is the following.   
 In the first section we give preliminaries on non-symplectic automorphisms of K3 surfaces and we fix the corresponding 
 notation: fixed loci, invariant lattices and eigenspaces in cohomology, moduli spaces.
In the second section, for each order  $n\in \{11,22,15,30, 16,20,24\}$, 
we prove Theorem \ref{table2} (see Theorem \ref{order11} and Propositions \ref{prop22}, \ref{prop15}, \ref{prop30}, \ref{teo-16}, \ref{prop20}, \ref{prop24}) 
and Theorem \ref{main}.
In the third and fourth section we prove Theorem \ref{thm22} and Theorem \ref{thm15} respectively.

\section{Background and preliminary results} \label{background}
We will work over the complex numbers and  we will denote by $\zeta_i$ a primitive $i$th root of unity.
Let $X$ be a K3 surface over $\C$ and let $\sigma$ be a purely 
non-symplectic automorphism of $X$ of order $n\geq 3$, i.e. 
$\sigma^*(\omega_X)=\zeta_n\omega_X$, where $\omega_X$ is a generator of the 
complex vector space $H^{2,0}(X)$.
\begin{notation}In what follows we will denote by $\sigma_{k}$ an element of  $\langle\sigma\rangle$ whose order is $k$.
\end{notation}

\subsection{Fixed locus}
We start describing the fixed locus of $\sigma$. 
The local action of $\sigma$ in a neighborhood of one of its fixed points can be linearized 
and can be described  by a matrix of the form
\[
A_{i,n}=\left(
\begin{array}{cc}
\zeta_{n}^{i+1}&0\\
0&\zeta_{n}^{n-i}
\end{array}
\right),\quad i=0,1,2,\ldots,\left\lfloor \frac {n-1}2\right\rfloor,
\]
 see \cite[\S 5]{Nikulin}.
 When $i=0$ the fixed point belongs to a fixed curve, otherwise it is an isolated fixed point.
This description implies that the fixed locus of $\sigma$ 
is the union of isolated points and disjoint smooth curves. Moreover, by the Hodge index theorem, 
the fixed locus contains at most one curve of genus $g\geq 2$. In what follows we will use the following notation for the fixed locus of $\sigma$:
\[
\Fix(\sigma)=C_g\sqcup R_1\sqcup \ldots\sqcup R_k\sqcup \{p_1,\dots, p_N\},
\]
where $C_g$ is a smooth curve of genus $g$, $R_1,\dots,R_k$ are smooth rational curves 
and $p_1,\dots,p_N$ are isolated fixed points.
The fixed points such that the local action is given by the matrix 
$A_{i,n}$ will be called points of type $A_{i,n}$ and the number 
of such points will be denoted by $a_{i,n}$. 

We now recall the {\em holomorphic Lefschetz formula} \cite{AS}, which 
relates these numbers with the action of $\sigma^*$ on the cohomology groups 
$H^j(X,\Osh_X)$:
\[
\sum_{j=0}^2 \tr\left(\sigma^*_{|H^j(X,\Osh_X)}\right)=\sum_{i=0}^{\left\lfloor \frac {n-1}2\right\rfloor}
\frac{a_{i,n}}{(1-\zeta_{n}^{i+1})(1-\zeta_{n}^{n-i})}+\alpha \frac{1+\zeta_n}{(1-\zeta_n)^2},
\]
where $\alpha:=\sum_{C\subset \Fix(\sigma)} (1-g(C))$. 
Observe that 
\[
H^1(X,\Osh_X)=H^3(X,\Osh_X)=\{0\}
\]
since $X$ is a K3 surface, $\sigma^*={\rm id}$ on $H^0(X,\Osh_X)\cong \C$ 
and $\sigma^*$ acts as multiplication by $\bar \zeta_n$ on 
$H^2(X,\Osh_X)\cong H^{0,2}(X)=\C\bar\omega_X$.  
Thus the left hand side of the formula is equal to $1+\bar\zeta_n$.

Finally, we recall {\em Hurwitz formula} for a ramified covering $f:X\to Y$ of degree $d$ between smooth 
complex projective varieties, 
which will be used several times in the paper 
for both curves and surfaces:
\[
K_X\sim f^*K_Y+\sum_{i}(e_i-1)C_i,
\]
where the $C_i$'s are the irreducible components of the ramification locus and $e_i$ 
is the associated ramification index (see for example \cite[\S 16, \S 17]{BP}). 

 \subsection{Eigenspaces and invariant lattices}
We now consider  the action of $\sigma^*$ in $H^2(X,\Z)$ and $H^2(X,\C)$. 
We will denote by $S(\sigma^i)\subset H^2(X,\Z)$ the invariant lattice of $\sigma^i$
for $i=0,\dots,n-1$.
Moreover, for any divisor $k$ of $n$ let
\[
H^2(X,\C)^{\sigma}_{k}:=\{x\in H^2(X,\C): \sigma^*x=\zeta_kx\}
\]
and let $d_k$ be its dimension. In particular $d_1$ is the rank of $S(\sigma)$ 
and $d_n$ is the dimension of $V^{\sigma}=H^2(X,\C)^{\sigma}_n$. 
In what follows we will denote by $d$ the vector whose entries are the numbers 
$d_k$, as $k$ varies in the set of divisors of $n$ in decreasing 
order: 
\[
d=(d_n,\dots,d_k,\dots, d_1),\ k|n.
\]

\begin{remark}\label{ns}
Observe that, since $\sigma$ is purely non symplectic, then $S(\sigma^i)$ 
is contained in the N\'eron-Severi lattice of $X$ for any $i=0,\dots, n-1$. 
In fact, given $x\in S(\sigma^i)$ we have
\[
(x,\omega_X)=((\sigma^i)^*x,(\sigma^i)^*\omega_X)=(x,\zeta_n^i\omega_X)=\zeta_n^i(x,\omega_X),
\]
which implies $(x,\omega_X)=0$ and thus $x\in H^2(X,\Z)\cap \omega_X^{\perp}={\rm NS}(X)$.
\end{remark}

We also recall the {\em topological Lefschetz formula} \cite[Theorem 4.6]{AS}, for simplicity we state it only for $\sigma$: 
\[
\chi(\Fix(\sigma))=\displaystyle\sum_{i=0}^4(-1)^{i}\tr\left(\sigma^*\Big|_{H^{i}(X,\R)}\right),
\]
where the right side is equal to $2+\tr(\sigma^*\Big|_{H^{2}(X,\R)})$ since $H^i(X,\R)=\{0\}$ for $i=1,3$ and 
$\sigma^*={\rm id}$ on $H^i(X,\R)$  for $i=0,4$.

Finally, we recall some notation for lattices which will appear in the paper: $A_\ell$ ($\ell\geq 1$), $D_m$ ($m\geq 4$) 
and $E_n$ ($n=6,7,8$) denote the negative definite even lattices associated to the Dynkin diagrams 
of the corresponding types, $U$ and $H_5$ denote the lattices with the following Gram matrices
\[
U=\left(
\begin{array}{cc}
0 & 1\\
1 & 0
\end{array}
\right),\quad 
H_5=\left(
\begin{array}{cc}
2 & 1\\
1 & -2
\end{array}
\right),
\]
and $U(r)$ with $r\geq 2$ denotes the lattice whose Gram matrix is the one of $U$ multiplied by $r$.

\subsection{Moduli spaces}\label{moduli}
Let $X$ be a K3 surface with an order $n$ automorphism $\sigma$  
such that $\sigma^*(\omega_X)=\zeta_n\omega_X$.
The period line $\C\omega_X$  belongs to the domain
\[
\mathcal D^{\sigma}=\{\C z\in \P(V^{\sigma}):  (z,\bar z) >0, (z,z)=0\},
\]
where $V^{\sigma}$ is the $\zeta_n$-eigenspace of $\sigma^*$ in $H^2(X,\C)$.
Observe that, for $n\geq 3$, we have 
\[
(z,z)=(\sigma^*z,\sigma^*z)=\zeta_n^2(z,z),
\]
thus the condition $(z,z)=0$ is not necessary and 
$\mathcal D^\sigma$ can be easily proved to be isomorphic to a complex ball.
On the other hand, if $n=2$, then $\mathcal D^{\sigma}$ is a type $IV$ 
Hermitian symmetric space.
By  \cite[Theorem 11.3]{DK} an arithmetic quotient of a Zariski open subset of 
$\mathcal D^{\sigma}$ parametrizes isomorphism classes of 
$(\rho,M)$-polarized K3 surfaces, where $\rho:C_n\to O(L_{K3})$ is a representation 
induced by the isometry $\sigma^*$  of $H^2(X,\C)$ 
and the choice of an isometry $H^2(X,\Z)\to L_{K3}$, 
and $M\subseteq L_{K3}$ is the invariant lattice of ${\rm Im}(\rho)$. 
In particular such moduli space has dimension $\dim(\mathcal D^{\sigma})=\dim(V^{\sigma})-1$ if $n\geq 3$ 
and  $\dim(V^{\sigma})-2$ if $n=2$.

On the other hand, if $T_X$ is the transcendental lattice of $X$,  it is known that the eigenvalues of $\sigma^*$ 
in $T_X\otimes_{\mathbb Z} \C$  are the primitive $n$th roots of unity  \cite[Section 3]{Nikulin}, thus 
$\rk(T_X)=\dim(V^{\sigma})\varphi(n)$. 
Since $\rk(T_X)\leq 21$, this implies that 
\[
\dim(\mathcal D^\sigma)\leq \gamma(n):=\left\lfloor \frac{21}{\varphi(n)}\right\rfloor-1.
\] 
In particular the dimension of $\mathcal D^\sigma$ is at most one if $\gamma(n)=1$. 
 We show that the converse also holds.

 \begin{lemma}\label{moduli}
 Let $n\not=60$ be a positive integer with $\varphi(n)\leq 20$ and $\gamma(n)>1$, 
 then there exist a K3 surface $X$ and a purely non-symplectic automorphism $\sigma$ 
 of $X$ of order $n$ such that $\dim(\mathcal D^\sigma)>1$.
\end{lemma}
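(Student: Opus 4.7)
The plan is to prove the converse bound by exhibiting, for each admissible $n$, an explicit pair $(X,\sigma)$ with $\dim(\mathcal D^\sigma)=\dim(V^\sigma)-1\geq 2$. First I would enumerate the relevant orders: the inequality $\gamma(n)=\lfloor 21/\varphi(n)\rfloor-1>1$ forces $\varphi(n)\leq 7$, and since $\varphi(n)$ is even for $n\geq 3$ one gets the finite list
\[
n\in\{2,3,4,5,6,7,8,9,10,12,14,18\},
\]
all of which lie in $TV_{K3}$ and are distinct from $60$. So the lemma reduces to finitely many cases.

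Next I would produce a pair $(X,\sigma)$ with $\dim(V^\sigma)\geq 3$ for each such $n$. The most economical route is to quote the existing classifications in the literature (Nikulin for $n=2$; Artebani--Sarti, Artebani--Sarti--Taki, Taki, Dillies, Al Tabbaa--Sarti--Taki, and others for the prime and prime-power cases $n=3,4,5,6,7,8,9,10,12,14,18$); in each reference the irreducible components of the moduli space are tabulated, and one may simply read off a component of dimension $\geq 2$. Alternatively, a uniform direct construction suffices: for suitable exponents $(a,b,c)$ one can consider an elliptic K3 surface in Weierstrass form
\[
y^2=x^3+p(t)x+q(t)
\]
equipped with the automorphism $(x,y,t)\mapsto(\zeta_n^{2a}x,\zeta_n^{3a}y,\zeta_n^{c}t)$, choosing $p$ and $q$ as semi-invariant polynomials with several free coefficients. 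The resulting family then carries several independent moduli, which after quotienting by the residual rescaling action still produces a base of dimension $\geq 2$; together with the Lefschetz/eigenspace computation already developed in Section \ref{background} this gives $\dim(V^\sigma)\geq 3$.

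For each example one would verify three standard points: that the automorphism has order exactly $n$, that $\sigma^*\omega_X=\zeta_n\omega_X$ (reading off the action on $\omega_X=dx\wedge dt/y$), and that the number of essential moduli parameters is at least two, equivalently that $\mathrm{rk}(T_X)\geq 3\varphi(n)$.

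The main obstacle is simply the bookkeeping: the lemma is intrinsically a case-by-case statement, so the cleanest proof is a table of representatives together with references to the classifications where these families already appear. No conceptual difficulty arises beyond checking that the classifications cover each of the twelve values of $n$ above and that at least one listed component in each case has moduli dimension strictly larger than $1$.
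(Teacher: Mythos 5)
Your proposal is correct and follows essentially the same route as the paper: enumerate the twelve orders $n\in\{2,3,4,5,6,7,8,9,10,12,14,18\}$ with $\varphi(n)\leq 6$, and for each either cite a known classification or exhibit an explicit elliptic/double-cover family with at least two moduli. The only difference of detail is that the paper handles the even orders $6,10,14,18$ by lifting the order $3,5,7,9$ examples via Proposition \ref{gs} (or by composing with a commuting involution), rather than citing separate classifications for those orders.
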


\begin{proof} We will denote by $d(n)$ the dimension of the moduli space of K3 surface carrying a purely non-symplectic automorphism of order $n$.
The orders $n\geq 2$ with $\varphi(n)\leq 20$ and $\gamma(n)>1$ are $n=7,9, 14, 18$ with $\gamma(n)=2$, 
 $n=5,8,10,12$ with $\gamma(n)=4$, $n=3,4,6$ with $\gamma(n)=9$, and $n=2$.
  
 For prime orders $n=3,5,7$ it is known by \cite{ArtebaniSartiTaki} that $d(n)=\gamma(n)$.
 Moreover, the same is true for orders $n=6, 10, 14$ by Proposition \ref{gs} and \cite{ArtebaniSartiTaki}.
 
 For order $n=9$ it is known by \cite{ACV} that $d(n)=2$.
 Moreover, the general member of one of its components of maximal dimension is an elliptic K3 surface 
 with Weierstrass equation
 \[
 y^2=x^3+t(t^3-a)(t^3-b)(t^3-c),\ a,b,c\in \C,
 \]
 which carries the order nine automorphism  $\sigma(x,y,t)=(\zeta_9^4x, \zeta_9^6y, \zeta_3t)$. 
 This surface also admits the non-symplectic involution 
 $\tau(x,y,t)=(x,-y,t)$ which commutes with $\sigma$, so it carries the non-symplectic automorphism 
 $\sigma\tau$  of order $18$. This shows that $d(18)=2$ as well.  
 
 When $n=4$, \cite[Example 6.3]{ArtebaniSarti4} is a $9$-dimensional family of K3 surfaces with a purely non-symplectic automorphism 
 of order $4$. 
 
 When $n=8$, \cite[Example 4.1]{TabbaaSarti} is a $2$-dimensional family of K3 surfaces with a purely non-symplectic automorphism 
 of order $8$.
 
 When $n=12$ the family of elliptic K3 surfaces defined by the Weierstrass equation 
 \[
 y^2=x^3+t\prod_{i=1}^5(t^2-a_i),\ a_i\in \C
 \]
 is $4$-dimensional and has an order $12$ automorphism, 
 $\sigma(x,y,t)=(-\zeta_3x,iy,-t)$ which can be easily checked to be purely non-symplectic.

When $n=2$ it is well known that  $d(2)=19$ and there is a unique component of maximal dimension 
 whose  general element is a double cover of $\P^2$ branched along a smooth plane sextic.
 \end{proof}
 
 \begin{remark}\label{nsvg}
 Under the hypotheses of Theorem \ref{main}, since $T_X$ has the structure of a $\Z[\zeta_n]$-module by \cite[Section 3]{Nikulin} and $\dim(V^\sigma)=2$
we have that ${\rm rk }\,{\rm NS(X)}\geq 22-2\varphi(n)$. 
The generality assumption in the statement of the theorem means that the N\'eron-Severi lattice of $X$ 
has the minimal rank. 
\end{remark}
 
 Finally, we recall a result contained in \cite[Theorem 1.4, Theorem 1.5]{GS} and in \cite{Dillies}.

\begin{proposition}\label{gs}
Let $X$ be a K3 surface with a non-symplectic automorphism  $\sigma$ of order $n$.
If either 
\begin{enumerate}[i.]
\item $n=5,13,17,19$, 
\item or $n=7,11$ and the fixed locus of $\sigma$ contains a curve, 
\item or $n=3$ and the the fixed locus of $\sigma$ contains at least two curves, 
\item or $n=3$ and the the fixed locus of $\sigma$ contains a curve and two points,
\end{enumerate}
then $X$ admits a non-symplectic automorphism $\tau$ of order $2n$ with $\tau^2=\sigma$.

Moreover, if $n = 11$ and  the fixed locus of $\sigma$ consists of only isolated fixed points, 
then $X$ does not admit a non-symplectic automorphism $\tau$ of order $22$ with $\tau^2=\sigma$.
\end{proposition}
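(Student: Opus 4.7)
The plan is to reduce both the existence and non-existence parts to a lattice-theoretic problem about commuting involutions, and then invoke the global Torelli theorem. The key observation is this: in all listed cases $n$ is odd, so if a non-symplectic $\tau$ of order $2n$ with $\tau^2=\sigma$ exists, then $\iota:=\tau^n$ is a non-symplectic involution commuting with $\sigma$; conversely, given any non-symplectic involution $\iota$ commuting with $\sigma$, one solves $2k\equiv 1\pmod n$ and sets $\tau:=\iota\sigma^k$, obtaining an element of order $\mathrm{lcm}(2,n)=2n$ with $\tau^2=\sigma^{2k}=\sigma$ and $\tau^*\omega_X=-\zeta_n^k\omega_X$ a primitive $2n$-th root of unity times $\omega_X$. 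Thus the task is to decide when $X$ carries a non-symplectic involution commuting with $\sigma$.

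For the existence direction I would proceed case by case. Recall that by Remark \ref{ns} and \cite[Section 3]{Nikulin} the transcendental lattice $T_X$ is a $\Z[\zeta_n]$-module on which $\sigma^*$ acts through multiplication by $\zeta_n$, so $-\mathrm{id}_{T_X}$ always commutes with $\sigma^*$ and acts as $-1$ on $H^{2,0}(X)$. The real work is to extend $-\mathrm{id}_{T_X}$ to an isometry $\iota^*$ of $H^2(X,\Z)$ which (a) commutes with $\sigma^*$, (b) preserves the gluing between $T_X$ and $S(\sigma)\subset\mathrm{NS}(X)$, and (c) sends some K\"ahler class to a K\"ahler class. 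For prime $n\in\{5,13,17,19\}$, the invariant lattices have been determined (see \cite{ArtebaniSartiTaki} and related work) and one can explicitly define $\iota^*$ on $S(\sigma)$ so that the glue is respected; global Torelli then delivers a geometric involution $\iota$, and the relation $\iota^*\sigma^*=\sigma^*\iota^*$ forces $\iota\sigma=\sigma\iota$ up to an element of $\langle\sigma\rangle$, which is absorbed by replacing $\iota$ with $\iota\sigma^j$. For $n=7,11$ the presence of a fixed curve $C_g\subset\mathrm{Fix}(\sigma)$ gives a large enough $\sigma$-invariant sublattice of $\mathrm{NS}(X)$ (containing the class of $C_g$ and the components of $\sigma$-orbits of $(-2)$-curves) for the gluing condition to be solvable; for $n=3$ the analogous constructions in \cite{Dillies,GS} show that two fixed curves, or one curve plus two isolated points, suffice to fix the discriminant form compatibly.

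The main obstacle, and the most delicate part, is the non-existence statement for $n=11$ with $\mathrm{Fix}(\sigma)$ consisting only of isolated points. Here I would argue by contradiction: suppose $\tau$ of order $22$ with $\tau^2=\sigma$ exists and set $\iota:=\tau^{11}$, a non-symplectic involution commuting with $\sigma$. Then $\mathrm{Fix}(\iota)$ is $\sigma$-invariant, and since each irreducible component of $\mathrm{Fix}(\iota)$ is a smooth curve or isolated point, the restriction of $\sigma$ to $\mathrm{Fix}(\iota)$ permutes its components. By Nikulin's classification of non-symplectic involutions via the triple $(r,a,\delta)$ the fixed locus $\mathrm{Fix}(\iota)$ is either empty, a pair of elliptic curves, or contains a curve of positive genus together with rational curves; in particular the topological Euler characteristic $\chi(\mathrm{Fix}(\iota))$ is controlled by the invariant lattice of $\iota^*$. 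Combining the holomorphic Lefschetz formula for $\sigma$ (which, under the isolated-points hypothesis, forces a specific distribution of the eleven isolated fixed points across the matrices $A_{i,11}$) with the Lefschetz fixed-point count for the commuting involution $\iota$ restricted to $\mathrm{Fix}(\sigma)$, one finds that $\iota$ must fix some of the isolated points of $\sigma$. A local analysis at such a common fixed point together with the commutation relation $\iota^*\sigma^*=\sigma^*\iota^*$ forces an eigenvalue of $\iota^*$ at that point incompatible with $\iota$ being a non-symplectic involution (both eigenvalues of $\sigma^*$ at an isolated fixed point are primitive $11$-th roots of unity, and $\iota^*$ must swap or fix them up to sign), yielding the contradiction.
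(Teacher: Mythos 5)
First, note that the paper does not actually prove this proposition: it is quoted verbatim from \cite[Theorems 1.4, 1.5]{GS} and \cite{Dillies}, so you are attempting something the authors only cite. Your opening reduction is correct and is the right framework: for odd $n$, an order-$2n$ root $\tau$ of $\sigma$ is equivalent to a non-symplectic involution $\iota$ commuting with $\sigma$ (and since $\Aut(X)\to O(H^2(X,\Z))$ is faithful for K3 surfaces, commutation of $\iota^*$ with $\sigma^*$ already gives $\iota\sigma=\sigma\iota$ exactly, so your ``absorbed by replacing $\iota$ with $\iota\sigma^j$'' clause is unnecessary). The problem is that neither half of the remaining argument is actually carried out. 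For existence, ``one can explicitly define $\iota^*$ on $S(\sigma)$ so that the glue is respected'' is precisely the content to be proved, and it is where the fixed-locus hypotheses live: the natural candidate $\mathrm{id}_{S(\sigma)}\oplus(-\mathrm{id}_{T_X})$ extends to $H^2(X,\Z)$ only when $-\mathrm{id}$ acts trivially on the glue group, e.g.\ when $S(\sigma)$ is unimodular. For $n=11$ this is exactly the dichotomy $S(\sigma)\cong U$ (fixed curve present) versus $S(\sigma)\cong U(11)$ (only isolated points), where the glue group is $(\Z/11)^2$ and no effective isometry of $U(11)$ acts as $-\mathrm{id}$ on it. Your sentence ``a fixed curve gives a large enough $\sigma$-invariant sublattice for the gluing condition to be solvable'' names no mechanism and, as written, is an assertion rather than a proof; the case analysis over the invariant lattices for $n=3,5,7,13,17,19$ (plus the Kähler-cone condition needed to invoke Torelli) is the whole theorem.

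The more serious defect is in the non-existence part, where the final step would fail. A non-symplectic involution has local eigenvalues $(1,-1)$ at every fixed point (it has no isolated fixed points), so if $\iota$ fixes an isolated fixed point $p$ of $\sigma$, then $p$ simply lies on a smooth curve $C\subset\Fix(\iota)$ which is $\sigma$-invariant and on which $\sigma$ restricts to an order-$11$ automorphism fixing $p$. Riemann--Hurwitz permits $C\cong\P^1$ carrying both points of $\Fix(\sigma)$, so there is no ``eigenvalue incompatible with $\iota$ being a non-symplectic involution'': the local data at a common fixed point are perfectly consistent. (Your quotation of Nikulin's trichotomy is also off --- the main fixed curve may have genus $0$.) The contradiction has to be global: for instance, $[C]$ would be a $\sigma^*$-invariant $(-2)$-class, hence would lie in $S(\sigma)\cong U(11)$, which represents only multiples of $22$; or one runs the holomorphic Lefschetz formula for the putative order-$22$ automorphism, whose fixed locus is forced into $\Fix(\sigma)=\{p_1,p_2\}$ with prescribed local types, and checks it has no solution. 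Either route completes the argument for the (very) general member, but neither is present in your sketch, and extending from the very general member to every surface in the family with $\Fix(\sigma)$ isolated also needs to be addressed.
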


 \section{Proof of Theorem \ref{table2} and Theorem \ref{main}}\label{s2}
 In this section we prove the two main theorems for each order.

\subsection{Order 11}\label{sec:11}
Non-symplectic automorphisms of order $11$ have been classified in \cite{OZ2} and \cite[Sec. 7]{ArtebaniSartiTaki}.
In particular the proof of Theorem \ref{main} for order $11$ follows from the following result.

\begin{theorem}\label{order11}
Let $X$ be a K3 surface with a non-symplectic automorphism $\sigma$ of order $11$ such that $\rk \,S(\sigma)=2$ 
(or equivalently $\dim(V^\sigma)=2$).
Then two cases can occur:
\begin{enumerate}[a)]
\item   $\Fix(\sigma)=C_1\sqcup\{p_1,p_2\}$ and $S(\sigma)={\rm NS}(X)\cong U$,
\item $\Fix(\sigma)=\{p_1,p_2\}$ and $S(\sigma)={\rm NS}(X)\cong U(11)$.
\end{enumerate}
where $C_1$ is a smooth curve of genus one.
In both cases $d=(2,2)$. Moreover, up to birational isomorphisms, $(X,\sigma)$ belongs to the family in Example \ref{ex11a} in case a)  and 
 to the family in Example \ref{ex11b} in case b).
\end{theorem}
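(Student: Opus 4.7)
The plan is to combine the Lefschetz formulas with Nikulin's classification of invariant lattices, drawing on the detailed case analyses carried out in \cite{OZ2} and \cite[\S 7]{ArtebaniSartiTaki}.

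I would first pin down the basic numerics. By \cite[\S 3]{Nikulin}, $T_X$ is a free $\Z[\zeta_{11}]$-module of rank $\dim(V^{\sigma})=2$, so $\rk T_X = 2\varphi(11) = 20$ and $\rk {\rm NS}(X)=2$. The eigenspace decomposition of $H^2(X,\C)$ under $\sigma^*$ gives $d_1 + 10\, d_{11} = 22$ with $d_{11}=2$, hence $d_1 = 2$; thus $d=(2,2)$. Since $S(\sigma)$ is primitive in $H^2(X,\Z)$ (as the kernel of $\sigma^*-\id$), lies inside ${\rm NS}(X)$ by Remark \ref{ns}, and has the same rank as ${\rm NS}(X)$, we conclude $S(\sigma) = {\rm NS}(X)$.

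Next I would determine the fixed locus. The trace of $\sigma^*$ on $H^2(X,\R)$ equals $d_1 - d_{11} = 0$ (using $\sum_{j=1}^{10}\zeta_{11}^j = -1$), so the topological Lefschetz formula gives $\chi(\Fix(\sigma)) = 2$. Combined with the holomorphic Lefschetz formula for $\sigma$ (and its powers, whose local weights at fixed points differ from those of $\sigma$ but whose fixed-point sets coincide since $11$ is prime), the numbers $a_{i,11}$ and $\alpha = \sum_C(1-g(C))$ satisfy a system of linear relations over $\Q(\zeta_{11})$. Solving this system, as carried out in \cite{OZ2} and \cite[\S 7]{ArtebaniSartiTaki}, leaves exactly two configurations compatible with $\rk S(\sigma)=2$: either $\Fix(\sigma) = C_1 \sqcup \{p_1,p_2\}$ with an elliptic fixed curve (case a), or $\Fix(\sigma) = \{p_1,p_2\}$ with no fixed curves (case b).

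To identify the N\'eron--Severi lattice in each case, I would use the fact that the invariant sublattice of an order-$p$ isometry of a unimodular lattice is $p$-elementary; hence $S(\sigma)$ is a rank-$2$ even hyperbolic $11$-elementary lattice, and up to isomorphism only $U$ and $U(11)$ satisfy this. The two are distinguished by the existence of $(-2)$-classes: $U$ contains $(-2)$-classes and supports an elliptic fibration with a $(-2)$-section, compatible with case a) (whose fixed elliptic curve appears as a smooth fiber of such a fibration), whereas $U(11)$ contains no $(-2)$-class, forcing the absence of any fixed rational curve in case b). A direct discriminant count using Nikulin's formula expressing the length of the discriminant group of $S(\sigma)$ in terms of the fixed-point invariants confirms the matching.

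Finally I would check that Examples \ref{ex11a} and \ref{ex11b} realize the two cases. For Example \ref{ex11a} the verification is direct: the order-$11$ automorphism $t \mapsto \zeta_{11}t$ pointwise fixes the smooth fiber over $t=0$ (an elliptic curve for generic $a$), and contributes two isolated fixed points on the type-II fiber over $t=\infty$ (the cusp and the intersection with the zero section), yielding $\Fix = C_1 \sqcup \{p_1,p_2\}$. The harder step is Example \ref{ex11b}: one must verify that the degree-$11$ cover of a principal homogeneous space of the rational elliptic surface $y^2 = x^3 + x + t$ yields a K3 surface carrying a purely non-symplectic order-$11$ automorphism whose fixed locus consists of exactly two isolated points. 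This is the main technical obstacle of the theorem and is established in \cite{OZ2}.
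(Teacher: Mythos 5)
Your proposal is correct and follows essentially the same route as the paper, which for order $11$ simply invokes the classifications of \cite{OZ2} and \cite[Sec.~7]{ArtebaniSartiTaki} together with Examples \ref{ex11a} and \ref{ex11b}; your Lefschetz and lattice-theoretic computations fill in the skeleton of that cited classification rather than replacing it, and you defer the two genuinely hard steps (the holomorphic Lefschetz case analysis and the verification of Example \ref{ex11b}) to the same sources the paper relies on. The only imprecision is the claim that $U$ and $U(11)$ are the \emph{only} rank-two even hyperbolic $11$-elementary lattices (the lattice with Gram matrix $\left(\begin{smallmatrix}2&1\\1&-5\end{smallmatrix}\right)$ is another), but this is harmless since, as you yourself note, the actual determination of the discriminant length of $S(\sigma)$ comes from Nikulin's formula in terms of the fixed-locus invariants.
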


\begin{example}\label{ex11a}
Given $a\in \C$, let $X_{11a}$ be the elliptic fibration with Weierstrass equation
\[
y^2=x^3+ax+(t^{11}-1).
\]
For general $a\in \C$ the fibration has one fiber of Kodaira type $II$ over $t = \infty$ 
and $22$ fibers of type $I_1$. Observe that $X_{11a}$ carries the order $11$ automorphism
\[
\sigma_{11a}(x, y,t) = (x, y,\zeta_{11}t),
\]
which fixes the smooth fiber over $t=0$ and two points in the fiber over $t=\infty$.
\end{example}

\begin{example}\label{ex11b}
Consider the extremal rational elliptic surface $\phi:Y\to \P^1$ with Weierstrass equation
\[
y^2=x^3+x+t.
\]
The fibration has a fiber of type $II^*$ over $t=\infty$ 
and two fibers of type $I_1$ over the zeroes of $\Delta=4+27t^2$
thus it is extremal.
Given $\alpha\in \P^1$ such that $\phi^{-1}(\alpha)$  is smooth, 
let $\phi_{\alpha,e}:Y_{\alpha,e}\to\P^1$ be the principal homogeneous space of $\phi$ 
associated to a non-trivial $11$-torsion element $e$ in $\phi^{-1}(\alpha)$.
We recall that  $\phi_{\alpha,e}$ has the same configuration of singular fibers 
as $\phi$ and it has a fiber $F=11F_0$ of multiplicity $11$ over $\alpha$ such 
that $(F_0)_{|F_0}=e\in \Pic^0(F_0)$ (see \cite[\S 4, Chapter V]{CD} as a reference for 
principal homogeneous spaces of rational jacobian elliptic fibrations). 
Let $\psi_{\alpha,e}:Z_{\alpha,e}\to\P^1$ be the degree $11$ base change 
of $\phi_{\alpha,e}$ branched along $t=\infty$ and $t=\alpha$.
A minimal resolution of $Z_{\alpha,e}$ is a K3 surface $X_{\alpha,e}$ carrying an elliptic fibration 
$\pi_{\alpha,e}$ induced by $\psi_{\alpha,e}$ which has $22$ fibers of type $I_1$ over the 
two fibers of type $I_1$ of $\psi_{\alpha,e}$ 
and a fiber of type $II$ over $t=\infty$.
The covering automorphism of $Z_{\alpha,e}\to Y_{\alpha,e}$ induces an order $11$ automorphism $\sigma_{11b}$
of $X_{\alpha,e}$.
\[
\xymatrix{
X_{\alpha,e}\ar[r]\ar[d]^{\pi_{\alpha,e}} &Z_{\alpha,e}\ar[d]^{\psi_{\alpha,e}}\ar[r] & Y_{\alpha,e}\ar[d]^{\phi_{\alpha,e}}\\
\P^1\ar[r] & \P^1\ar[r]^{11:1} & \P^1.
}
\]
We will denote by $(X_{11b},\sigma_{11b})$ the family of K3 surfaces 
with automorphism obtained with this construction.
The automorphism $\sigma_{11b}$ fixes exactly two points in the fiber of $\pi_{\alpha,e}$ of type $II$.
\end{example}

\subsection{Order 22}\label{sec:22}
In this section we will give the classification of purely non-symplectic automorphisms of order $22$ with $\dim (V^{\sigma})=2$. 
The full classification,  including the cases with $\dim(V^\sigma)=1$, will be given in Section \ref{class22}.
\begin{proposition}\label{prop22}
Let $X$ be a K3 surface with a purely non-symplectic automorphism $\sigma$ of order $22$ 
such that $\dim (V^\sigma)=2$. Then the fixed loci of $\sigma=\sigma_{22}$ and of its powers 
$\sigma_{11}=\sigma^2$ and $\sigma_2=\sigma^{11}$  
are as follows:
\[
\begin{array}{c|c|c}
\Fix(\sigma_{22}) & \Fix(\sigma_{11}) & \Fix(\sigma_{2})\\
\hline
\{p_1,\dots,p_6\} & C_1\sqcup \{p_5,p_6\} & C_{10}\sqcup R \\
\end{array}
\]
where $g(C_1)=1,\ g(C_{10})=10$ and $g(R)=0$.
Moreover $d=(2,0,0,2)$ and ${\rm NS}(X)\cong U$ for a very general K3 surface with such property.
\end{proposition}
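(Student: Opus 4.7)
The plan is to apply Theorem \ref{order11} to $\sigma_{11}=\sigma^2$ and then extend the analysis to the full cyclic group $\langle\sigma\rangle$. The first step is to verify that the hypothesis $\dim V^\sigma=2$ forces $\dim V^{\sigma_{11}}=2$. Because $\Z[\zeta_{22}]=\Z[\zeta_{11}]$, the transcendental lattice $T_X$ carries a common module structure over this ring, so the eigenvalues of $\sigma^*$ on $T_X\otimes\C$ are the primitive $22$nd roots of unity, each with multiplicity $2$; the squaring map is a bijection from primitive $22$nd roots onto primitive $11$th roots, hence each primitive $11$th root is an eigenvalue of $\sigma_{11}^*$ of multiplicity $2$, giving $\dim V^{\sigma_{11}}=2$. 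The last assertion of Proposition \ref{gs} then rules out case (b) of Theorem \ref{order11}, since $\sigma$ is itself an order $22$ non-symplectic automorphism with $\sigma^2=\sigma_{11}$. Hence case (a) holds: $\Fix(\sigma_{11})=C_1\sqcup\{p_5,p_6\}$ with $g(C_1)=1$ and, for very general $X$, ${\rm NS}(X)=S(\sigma_{11})\cong U$.

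The second and most delicate step is to show that $\sigma$ acts trivially on ${\rm NS}(X)=U$. Since $\sigma^2=\sigma_{11}$ fixes $U$ pointwise, the image of $\sigma$ in $O(U)=\{\pm\mathrm{id},\pm s\}$ (where $s$ denotes the swap of the two standard isotropic generators $e,f$) has order at most $2$. The negatives are excluded because $\sigma$ permutes effective divisor classes, and neither $-\mathrm{id}$ nor $-s$ preserves the effective cone containing $[C_1]$. The swap $s$ is excluded by a careful analysis of $[C_1]$: since $\sigma$ commutes with $\sigma_{11}$ it permutes the connected components of $\Fix(\sigma_{11})$, and $C_1$ is the unique positive-dimensional component, so $\sigma(C_1)=C_1$ and thus $[C_1]\in U$ is a $\sigma^*$-invariant isotropic class. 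Such a class must lie on one of the two primitive isotropic rays $\Z e$ and $\Z f$, which are interchanged by $s$. Consequently $\sigma$ acts as the identity on ${\rm NS}(X)$, and $d=(2,0,0,2)$ follows immediately: $d_1=2$ from this trivial action, $d_{22}=2$ by hypothesis, and $d_{11}=d_2=0$ because the $\sigma^*$-eigenvalues on $T_X\otimes\C$ are primitive $22$nd roots.

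For the fixed loci, $\sigma_2=\sigma^{11}$ acts as $-\mathrm{id}$ on $T_X$ (because $\zeta_{22}^{11}=-1$) and as the identity on ${\rm NS}(X)$, so $S(\sigma_2)=U$ has Nikulin invariants $(r,a,\delta)=(2,0,0)$, and Nikulin's classification of non-symplectic involutions yields $\Fix(\sigma_2)=C_{10}\sqcup R$ with $g(C_{10})=10$ and $R$ rational. To describe $\Fix(\sigma)\subseteq\Fix(\sigma_{11})$, note that $\sigma|_{C_1}=\sigma_2|_{C_1}$ is a non-trivial involution on the elliptic curve $C_1$ (because $C_1$ is not a component of $\Fix(\sigma_2)$), hence has exactly $4$ fixed points; the topological Lefschetz formula then gives $\chi(\Fix(\sigma))=2+\tr(\sigma^*|H^2(X,\R))=2+2+2\mu(22)=6$, which forces $p_5$ and $p_6$ to be $\sigma$-fixed as well, accounting for all six isolated fixed points. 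I expect the main obstacle to be the isotropic-ray argument pinning down the $\sigma$-action on $U$ in the second step; once this is settled, the rest reduces to Nikulin's tables and a single Lefschetz count.
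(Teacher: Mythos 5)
Your proof is correct, but it reaches the two key conclusions by a genuinely different route than the paper. Both arguments start the same way: use Proposition \ref{gs} (last assertion) together with Theorem \ref{order11} to force case (a) for $\sigma_{11}$, so that $\Fix(\sigma_{11})=C_1\sqcup\{p_5,p_6\}$ and ${\rm NS}(X)=S(\sigma_{11})\cong U$. The divergence is in deciding between $d=(2,0,1,1)$ and $d=(2,0,0,2)$: the paper compares the two candidate values $\chi_{22}=4$ or $6$ coming from the topological Lefschetz formula and eliminates the first by a holomorphic Lefschetz computation at the fixed points of type $A_{10,22}$, whereas you pin down $\sigma^*|_U$ inside $O(U)=\{\pm\operatorname{id},\pm s\}$ directly, excluding the negatives by effectivity and the swap $s$ by the $\sigma^*$-invariant isotropic class $[C_1]$, which must lie on one of the two isotropic rays that $s$ interchanges. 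This lattice-theoretic argument is clean and avoids the holomorphic Lefschetz formula entirely; it also immediately gives $S(\sigma_2)={\rm NS}(X)\cong U$, so you can read off $\Fix(\sigma_2)=C_{10}\sqcup R$ from Nikulin's $(r,a,\delta)=(2,0,0)$ table, while the paper only knows $\chi_2=-16$ at that stage and must separately exclude the genus-$9$ alternative via Riemann--Hurwitz (no order-$11$ automorphism on a genus-$9$ curve). The trade-off is that the paper's Lefschetz route works without first identifying the full N\'eron--Severi action, while yours leans on the explicit structure of $O(U)$, which happens to be available here because $U$ has rank $2$.

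One small imprecision: a non-trivial involution of an elliptic curve need not have $4$ fixed points --- it could be a fixed-point-free translation by a $2$-torsion point --- so the parenthetical ``hence has exactly $4$ fixed points'' is not justified as stated. This does not damage the argument, since $\Fix(\sigma)\cap C_1$ has cardinality in $\{0,4\}$ and $\Fix(\sigma)\cap\{p_5,p_6\}$ has cardinality in $\{0,2\}$, so your Lefschetz count $\chi(\Fix(\sigma))=6$ forces $4+2$ in one stroke; you should phrase it that way rather than asserting the $4$ fixed points before invoking the count.
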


\begin{proof} Decomposing $H^2(X,\C)$ as the direct sum of the eigenspaces 
of $\sigma^*$ we obtain, with the notation in Section \ref{background}:
\[
\dim H^2(X,\C)=22=10d_{22}+10d_{11}+d_2+d_1=20+10d_{11}+d_2+d_1.
\]
Since $d_{22}=2$, thus $d_1+d_2=2$ and $d_{11}=0$, so either $d=(2,0,1,1)$ or $(2,0,0,2)$.
Let $\chi_i:=\chi(\Fix(\sigma_{i})), i\in\{2,11,22\}$. By the topological Lefschetz formulas we have

\begin{equation}
\begin{cases}
\chi_{22}&=d_{22}-d_{11}-d_{2}+d_1+2\\
\chi_{11}&=-d_{22}-d_{11}+d_{2}+d_1+2\\
\chi_{2}&=-10d_{22}+10d_{11}-d_{2}+d_1+2.
\end{cases}
\label{sistema22}
\end{equation}

This implies $\chi_{11}=2$.
By Proposition \ref{gs}, if a K3 surface admits a non-symplectic automorphism of order $11$
without fixed curves,  it does not admit a non-symplectic automorphism of order $22$.
This result and Theorem \ref{order11} imply that $\Fix(\sigma_{11})$ is the union of a 
smooth genus one curve $C$ and two points $p,q$. 
On the other hand the same equations give that 
$\chi_{22}=4$ if $d=(2,0,1,1)$ and $=6$ if $d=(2,0,0,2)$.
This implies that  $\sigma_{22}$ is not the identity on $C$, thus 
it acts on it as an involution with $4$ fixed points, and it either exchanges or fixes $p$ and $q$.

We will now show that $\sigma_{22}$ must fix $p$ and $q$, i.e. that $\chi_{22}=6$.
Observe that the fixed points of $\sigma_{22}$ on $C$ are of type $A_{10,22}$ 
since they are contained in a fixed curve of $\sigma_{22}^2$. 
If these were the only fixed points of $\sigma_{22}$, an easy computation shows that  
holomorphic Lefschetz formula does not hold, giving a contradiction.

Finally $\chi_{2}=-16$. By \cite{Nikulin} this implies that the fixed locus of $\sigma_2$ 
is either a genus $9$ curve or the union of a genus $10$ curve and a rational curve.
The first case is not possible since a curve of genus $9$ has no order $11$ automorphisms 
by the Riemann-Hurwitz formula.

Observe that for a very general $K3$ surface as in the statement 
${\rm rk}\, {\rm NS}(X)=22-2\varphi(22)=2$ (see Remark \ref{nsvg})
and  $S(\sigma_{11})\subseteq {\rm NS}(X)$ by Remark \ref{ns},
thus ${\rm NS}(X)=S(\sigma_{11})\cong U$ by Theorem \ref{order11}.
\end{proof}

\begin{example}\label{ex22}
 The elliptic K3 surface in Example \ref{ex11a}  
 \[
y^2=x^3+ax+(t^{11}-1), \ a\in\C
\]
admits the order $22$ automorphism  
\[
\sigma_{22}(x,y,t)=(x,-y,\zeta_{11}t),
\]
which fixes four points in the smooth fiber over $t=0$ and 
two points in the fiber of type $II$ over $t=\infty$.
The involution $\sigma_2=\sigma_{22}^{11}$ fixes the curve $y=0$, which has genus $10$,
and the sections at infinity. 
 Since $\sigma_2$ has fixed curves and since there exist no symplectic 
 automorphism of a K3 surface of order $11$ \cite{Nikulin}, then $\sigma$ is purely non-symplectic.
\end{example}

\begin{proof}[Proof of Theorem \ref{main}, order 22]
Let $X$ be a K3 surface with a purely non-symplectic automorphism $\sigma=\sigma_{22}$ of order $22$.
By Proposition \ref{prop22}  $\Fix(\sigma_{11})$ contains an elliptic curve $C_1$ and two points. 
Thus by Theorem \ref{order11} $(X,\sigma_{11})$ belongs to the family in Example \ref{ex11a} up to isomorphism, i.e. 
it carries an elliptic fibration $\pi:X\to \P^1$ with Weierstrass equation
\[
y^2=x^3+ax+(t^{11}-1), \ a\in\C
\]
and $\sigma_{11}(x,y,t)=(x,y,\zeta_{11}t)$.
The lattice generated by the class of a fiber and 
the class of a section of $\pi$ is isometric to the lattice $U$ 
and is fixed by the automorphism $\sigma_{11}^*$, thus
it coincides with $S(\sigma_{11})$ by Theorem \ref{order11}.
Since $\sigma_{22}^*$ preserves the lattice $S(\sigma_{11})$ 
and this contains a unique class of elliptic fibration 
and a unique class of smooth rational curve, 
then $\sigma_{22}^*$ preserves both.
By Proposition \ref{prop22} the fixed locus of the 
involution $\sigma_2$ is the disjoint union of a smooth curve $C_{10}$ 
of genus $10$ and a smooth rational curve $R$. 
The curve $C_{10}$ is clearly transverse to the fibers of $\pi$,
thus each fiber of $\pi$ contains fixed points of $\sigma_2$.
This implies that the action induced by $\sigma_2$ on $\P^1$ is the identity,
i.e. each fiber of $\pi$ is preserved by $\sigma_2$.
Moreover, the unique section $S$ of $\pi$ must be pointwise fixed 
by $\sigma_2$, so that $R=S$.
Since $\sigma_2$ is an involution which preserves each fiber of $\pi$ and fixes  
$S$, then it is defined by $(x,y,t)\mapsto (x,-y,t)$.
This shows that the action of $\sigma_{22}=\sigma_{11}\circ \sigma_2$ 
on $\pi$ is the one described in the statement of Theorem \ref{main}, 
concluding the proof.
\end{proof}

\subsection{Order 15}\label{sec:15}
In this section we will give the classification of purely non-symplectic automorphisms of order $15$ 
with $\dim(V^\sigma)=2$. The full classification, including the cases with $\dim(V^\sigma)=1$, 
will be given in Section \ref{class15}.

\begin{proposition}\label{prop15}
Let $X$ be a K3 surface with a purely non-symplectic automorphism $\sigma$ of order $15$ 
such that $\dim(V^\sigma)=2$. Then the fixed loci of $\sigma=\sigma_{15}$ and its powers 
$\sigma_{i}=\sigma^{\frac{15}{i}}$ 
are as follows:
 \[
\begin{array}{c||c|c|c}
&\Fix(\sigma_{15}) & \Fix(\sigma_{5}) & \Fix(\sigma_{3})\\
\hline
a)&\{p_1,\dots,p_5 \}& C_2\sqcup \{p_1\} & C_2'\sqcup\{p_2,p_3\} \\
b)&\{p_1,\dots,p_7 \}& C_1\sqcup \{p_1,\dots,p_4\} & C_4\sqcup R\sqcup \{p_1\}\\
c) & \{p_1,\dots,p_4\} & C_1\sqcup \{p_1,q_1,q_2,q_3\} & C_4
\end{array}
\]
where $g(C_1)=1, g(C_2)=g(C_2')=2, g(C_4)=4$ and $g(R)=0$.
Moreover, $d=(2,1,0,2)$ in case a), $d=(2,0,1,4)$ in case b) 
and $d=(2,0,2,2)$ in case c).
Finally ${\rm NS}(X)\cong U(3)\oplus A_2\oplus A_2$ for a very general K3 surface $X$ in case a) 
and ${\rm NS}(X)\cong H_5\oplus A_4$ for a very general K3 surface $X$ in cases b) and c), 
where $H_5$ is the lattice defined in \cite[Section 1]{ArtebaniSartiTaki}. 
\end{proposition}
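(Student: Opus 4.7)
The plan is to mirror the strategy used for order $22$ in Proposition \ref{prop22}: combine the eigenspace dimension count, the topological and holomorphic Lefschetz formulas, the known classifications of non-symplectic automorphisms of orders $3$ and $5$, and the fact that $\sigma_3$ and $\sigma_5$ commute, so in particular $\Fix(\sigma_{15})=\Fix(\sigma_3)\cap\Fix(\sigma_5)$.

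First I decompose $H^2(X,\C)$ into $\sigma^*$-eigenspaces and group by Galois orbits, obtaining $22=8d_{15}+4d_5+2d_3+d_1$. With $d_{15}=2$ the admissible vectors $(d_5,d_3,d_1)$ are the non-negative integer solutions of $4d_5+2d_3+d_1=6$. Computing $\tr(\sigma_k^*|H^2(X,\R))$ via the Galois-orbit sums of roots of unity (using that $\sigma_5=\sigma^3$ sends primitive $15$th roots in pairs to primitive $5$th roots and primitive $3$rd roots to $1$, while $\sigma_3=\sigma^5$ sends primitive $15$th roots in quadruples to primitive $3$rd roots and primitive $5$th roots to $1$), the topological Lefschetz formula gives
\begin{align*}
\chi(\Fix(\sigma_{15}))&=4+d_1-d_3-d_5,\\
\chi(\Fix(\sigma_5))&=-2+d_1+2d_3-d_5,\\
\chi(\Fix(\sigma_3))&=-6+d_1-d_3+4d_5.
\end{align*}

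Next I appeal to the Artebani--Sarti--Taki classification for order $5$ and to Nikulin's for order $3$ to restrict the possible shapes of $\Fix(\sigma_5)$ and $\Fix(\sigma_3)$. Since $\sigma_3$ preserves $\Fix(\sigma_5)$, Riemann--Hurwitz applied to $\sigma_3$ on each $\sigma_5$-fixed curve and a count of the isolated fixed points give independent constraints on $\#\Fix(\sigma_{15})=\chi(\Fix(\sigma_{15}))$. A case-by-case comparison against the formulas above should rule out every admissible vector except $(d_{15},d_5,d_3,d_1)\in\{(2,1,0,2),(2,0,1,4)\}$, corresponding to cases a) and b). In each surviving case I apply Riemann--Hurwitz to $\sigma_3|_C$ and to $\sigma_5|_{C'}$ on the fixed curves and use the holomorphic Lefschetz formula for $\sigma_{15}$ to determine the types $A_{i,15}$ of the isolated fixed points, reproducing the tabulated fixed loci.

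Finally, since $\dim V^\sigma=2$ the transcendental lattice has rank $16$, so for very general $X$ the rank of ${\rm NS}(X)$ equals $6$. By Remark \ref{ns} the invariant lattices $S(\sigma_3)$ and $S(\sigma_5)$ both lie in ${\rm NS}(X)$; reading them off from the classifications of order $3$ and order $5$ invariant lattices (keyed to the fixed loci determined above) and glueing them via their discriminant groups pins down ${\rm NS}(X)$ as $U(3)\oplus A_2\oplus A_2$ in case a) and as $H_5\oplus A_4$ in case b). I expect this last step to be the main obstacle, since it requires identifying the full isometry class of ${\rm NS}(X)$, and not merely its rank and signature, which demands a careful discriminant-form computation and a primitivity check inside $H^2(X,\Z)$.
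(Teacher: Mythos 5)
Your setup (eigenspace decomposition, the three topological Lefschetz identities, and the appeal to the order-$3$ and order-$5$ classifications) matches the paper exactly, and your formulas for $\chi(\Fix(\sigma_k))$ agree with system \eqref{sistema15}. But there is a genuine gap at the crucial step: you assert that a ``case-by-case comparison against the formulas above'' together with Riemann--Hurwitz will eliminate every vector except $(2,1,0,2)$ and $(2,0,1,4)$. This fails for $d=(2,0,2,2)$, which gives $(\chi_{15},\chi_5,\chi_3)=(4,4,-6)$ and is \emph{numerically consistent}: the holomorphic Lefschetz formula admits the solution $a=(0,1,0,0,3,0,0)$, so $\Fix(\sigma_5)$ is an elliptic curve plus four points with $\sigma$ fixing three points on the elliptic curve, and $\chi_3=-6$ is compatible with $\Fix(\sigma_3)$ being either a genus-$4$ curve (an order-$5$ automorphism of a genus-$4$ curve with $4$ fixed points is allowed by Riemann--Hurwitz) or a genus-$5$ curve plus a rational curve (order $5$, quotient genus $1$, two fixed points is also allowed). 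The paper has to kill both subcases by genuinely geometric arguments: in the genus-$5$ case it exhibits the elliptic fibration $y^2=x^3+p_{12}(t)$ with twelve type $II$ fibers and derives a contradiction from how $\sigma_5$ must permute them; in the genus-$4$ case it passes to the quotient quadric $\P^1\times\P^1$ and checks that no curve of bidegree $(3,3)$ is invariant under the induced order-$5$ automorphism. Nothing in your local/numerical toolkit substitutes for these steps, so the proposal as written does not close the classification of admissible $d$.

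Two smaller remarks. In case a) you also need to rule out $\Fix(\sigma_3)$ containing a genus-$3$ curve, which the paper does by citing that no genus-$3$ curve admits an order-$5$ automorphism; your Riemann--Hurwitz step plausibly covers this. For the N\'eron--Severi lattice you anticipate a delicate discriminant-form gluing, but none is needed: in case a) the single invariant lattice $S(\sigma_3)=S(\sigma^5)$ already has rank $d_1+4d_5=6=\operatorname{rk}\mathrm{NS}(X)$, and in case b) $S(\sigma_5)=S(\sigma^3)$ has rank $d_1+2d_3=6$, so $\mathrm{NS}(X)$ equals that invariant lattice, whose isometry class is read off directly from the order-$3$ and order-$5$ classifications.
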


\begin{proof} Decomposing $H^2(X,\C)$ as the direct sum of the eigenspaces 
of $\sigma^*$ we obtain, with the notation in Section \ref{background}:
\[
22=8d_{15}+4d_{5}+2d_{3}+d_1=16+4d_{5}+2d_{3}+d_1,
\]
thus $d\in \{(2,1,0,2), (2,0,2,2), (2,0,1,4), (2,0,0,6)\}$.
Let $\chi_{i}:=\chi(\Fix(\sigma_{i})), i\in\{3,5,15\}$. By the topological Lefschetz fixed point formulas:
 \begin{equation}
\begin{cases}
\chi_{15}&=d_{15}-d_{5}-d_{3}+d_1+2\\
\chi_5&=-2d_{15}-d_{5}+2d_3+d_1+2\\
\chi_3&=-4d_{15}+4d_5-d_3+d_1+2.
\end{cases}
\label{sistema15}\end{equation}
We will show that $d=(2,1,0,2)$, $d=(2,0,1,4)$ and $d=(2,0,2,2)$ are the only possible cases.

Assume that $d=(2,1,0,2)$. Thus $(\chi_{15}, \chi_5, \chi_3)=(5,-1,0)$.
By \cite{ArtebaniSartiTaki} we have that  $\Fix(\sigma_{5})$ is the union of a curve $C_2$
of genus $2$ and one point. Since $\chi_{15}=5$ the action of $\sigma$ on $C_2$ 
has order $3$  with $4$ fixed points by the Riemann-Hurwitz formula. In particular 
$\Fix(\sigma)$ is the union of $5$ points.
Finally by \cite{ArtebaniSarti}  $\Fix(\sigma_{3})$ is either the union of a genus $2$ curve and $2$ points 
or contains a curve of genus three. 
The second case is not possible since there is no genus $3$ curve with an order five automorphism 
by \cite[Table 5]{broughton}.

If $d\not=(2,1,0,2)$, then $\chi_{5}=4$ 
and $\chi_{3}=-6,-3,0$ if $d=(2,0,2,2), (2,0,1,4)$ or $(2,0,0,6)$ respectively.
By \cite{ArtebaniSartiTaki}  $\Fix(\sigma_{5})$ is either the union of an elliptic curve $C_1$ and $4$ points,
or the union of $4$ points. 
Observe that $C_1$ can not be contained in $\Fix(\sigma_{3})$ since 
by \cite{ArtebaniSarti} this would imply $\chi_{3}\geq 3$. 
Thus, looking at the possible actions of $\sigma$ on $C_1$ and the $4$ points we find that  
$\chi_{15}$ is either $1$, $4$ or $7$.

If $d=(2,0,0,6)$, then $\chi_{15}=10$ by \eqref{sistema15}, giving a contradiction.

If $d=(2,0,1,4)$, then $\chi_{15}=7$ by \eqref{sistema15}.
Thus $\Fix(\sigma_5)$ is the union of an elliptic curve $C_1$ and $4$ points and 
$\Fix(\sigma)$ consists of $7$ points, $3$ of them on $C_1$.
Moreover $\chi_{3}=-3$, thus by \cite{ArtebaniSarti} $\Fix(\sigma_{3})$ is 
either the union of a genus $4$ curve, a rational curve and one point, or 
it contains a curve of genus $3$. The last case is not possible by \cite[Table 5]{broughton}.

If $d=(2,0,2,2)$, then $\chi_{15}=4$ and $\chi_3=-6$ by \eqref{sistema15}.
Observe that $\sigma$ has $7$ types of isolated fixed points.
The fixed points of type $A_{1,15}, A_{4,15}, A_{7,15}$ are isolated fixed points for $\sigma_3$ too, 
while points of type $A_{2,15},A_{3,15},A_{5,15},A_{6,15}$ lie on a curve fixed by $\sigma_3$.
Observe that $\sigma_{15}$ acts on the set of isolated fixed points of $\sigma_3$ 
with orbits of length either $1$ or $5$.  Thus we have
\[
a_{1,15}+a_{4,15}+a_{7,15}\leq a_{1,3}, \quad a_{1,15}+a_{4,15}+a_{7,15}\equiv a_{1,3} \mod 5.
\]

Moreover, points of type $A_{4,15}, A_{5,15}$ lie on a curve fixed by $\sigma_5$, 
while points of type $A_{1,15}, A_{2,15}, A_{3,15}, A_{6,15}, A_{7,15}$ are isolated fixed points for $\sigma_5$ too.
Checking types one has 
\begin{equation}\label{eqpts}
a_{1,15}+a_{3,15}+a_{6,15}\leq a_{1,5},\quad 
a_{2,15}+a_{7,15}\leq a_{2,5}.  
\end{equation}

Since $\chi_3=-6$,  then $a_{1,3}=0$ by \cite{ArtebaniSarti}.
Applying the holomorphic Lefschetz formula to $\sigma$ with this condition and 
using the fact that $\alpha=0$,  we find that $(a_{1,15},a_{2,15},\ldots,a_{7,15})=(0,1,0,0,3,0,0)$.
Since $a_{5,15}=3$, then we find that $\Fix(\sigma_5)$ contains an elliptic curve $C_1$
and $\sigma$ fixes three points on it.

For $\Fix(\sigma_3)$ there are two possibilities by \cite{ArtebaniSarti}:
it is either a curve of genus four or the union of a genus five curve and a rational curve.
The second case is excluded by Lemma \ref{g50}, where $C'$ is the elliptic curve $C_1$.

We now compute the N\'eron-Severi lattice of a  very general $X$ in each case. Observe that
since $d_{15}=2$ and $\varphi(15)=8$, the N\'eron-Severi lattice of $X$ has rank $22-2\cdot 8=6$.
In case a) the invariant lattice $S(\sigma^5)=S(\sigma_3)$ has rank $d_1+4d_5=6$,
thus ${\rm NS}(X)=S(\sigma_3)\cong U(3)\oplus A_2\oplus A_2$, where the last isomorphism is by \cite{ArtebaniSartiTaki}.
In cases b) and c) the invariant lattice $S(\sigma^3)=S(\sigma_5)$ has rank $d_1+2d_3=6$,
thus we conclude as before that ${\rm NS}(X)=S(\sigma_5)$. In both cases $S(\sigma_5)$ 
is isomorphic to $H_5\oplus A_4$  by \cite{ArtebaniSartiTaki}.
\end{proof}

\begin{lemma}\label{g50}
Let $X$ be a K3 surface and $\tau$ be a non-symplectic automorphism of order three of $X$ 
whose fixed locus is the disjoint union of a smooth curve $C$ of genus five and a smooth rational curve $R$.
Then $X$ has no purely non-symplectic automorphism $\sigma$ of order $15$  such that $\sigma^5=\tau$ 
and such that the fixed locus of $\sigma^3$ contains a curve $C'$ distinct from $C$ and $R$.
\end{lemma}

\begin{proof}
Let $\pi:X\to Y$ be the quotient morphism by $\tau$.
Since the fixed locus of $\tau$ is a smooth curve and the automorphism is non-symplectic of order $3$, 
then $Y$ is a smooth rational surface. 
Moreover,  the invariant lattice $S(\tau)$  has rank two  by \cite{ArtebaniSarti}.
Since $\pi^*$ is injective and $\pi^*{\rm NS}(Y)$ is a sublattice of $S(\tau)$,
then $\rk\,{\rm NS}(Y)\leq 2$. Thus $Y$ is isomorphic to either $\P^2$ or a Hirzebruch surface $\mathbb F_r$, $r\geq 0$.
The covering $\pi$ is branched along a smooth curve $B$
 whose class $[B]\in {\rm NS}(Y)$ satisfies $-3K_Y=2[B]$ by the Hurwitz formula.
This excludes the case $Y\cong \P^2$.
We recall that if $Y=\mathbb F_r$, then $-K_Y=(r+2)f+2e$, where $f^2=0, e^2=-r$ and $f\cdot e=1$.
Thus $r$ must be even and 
\[
[B]=\frac{3(r+2)}{2}f+3e.
\]
Since $R^2<0$, then its image in $Y$ has the same property and is the unique 
curve of negative self-intersection in $Y$, i.e. $[\pi(R)]=e$.
Moreover, since $B$ is the disjoint union of $\pi(R)$ and 
 $\pi(C)$ then
\[
([B]-e)\cdot e=\frac{3(r+2)}{2}-2r=\frac{6-r}{2}=0.
\]
Thus $Y\cong \mathbb F_6$ and the class of $\pi(C)$ is $12f+2e$. 
Let $p:\F_6\to \P^1$ be the natural fibration. Observe that the restriction 
of $p$ to $\pi(C)$ is a double cover of $\P^1$ since $(12f+2e)\cdot f=2$,
thus $\pi(C)$ is hyperelliptic. This implies that there are $12$ fibers of $p$ 
which are tangent to $\pi(C)$.

Assume now that $X$ has an automorphism $\sigma$ of order $15$ with $\sigma^5=\tau$. 
Then $\sigma$ induces an automorphism $\bar\sigma$
of $Y$ of order $5$ which preserves both $\pi(R)$ and $\pi(C)$.
Since $\sigma^3$ is not the identity on $R$, then $\bar\sigma$ is not the identity on $\pi(R)$, 
thus we can assume that it acts on the basis of the fibration $p$ 
as $(x,y)\mapsto (\zeta_5x,y)$, where $\zeta_5$ is a primitive $5$-th root of unity.
In particular there are exactly two fibers of $p$ which are invariant for $\bar\sigma$.

This implies that the image of the
curve $C'$ in $\Fix(\sigma^3)$ 
is a fiber of  $p$, which is invariant for $\bar\sigma$.
On the other hand 
$\bar\sigma$ preserves $\pi(C)$ and thus 
permutes the $12$ fibers of $p$ 
which are tangent to $\pi(C)$.
Thus it should preserve at least two of them.
The curve $\pi(C)$ can not be tangent to $\pi(C')$,
since otherwise $C'$ would be singular, thus 
$\bar\sigma$ should leave invariant three fibers 
of $p$, a contradiction.
\end{proof}

\begin{example}\label{ex15a} 
Let $B$ be the plane sextic defined by
\[
F_6(x_0,x_1,x_2)=a_1x_0^6+ a_2x_0x_1^5+a_3x_2^6+a_4x_0^3x_2^3,
\]
with general $a_1,a_2,a_3,a_4\in \C$. 
Let $X$ be a double cover of $\P^2$ branched along $B$, 
which can be defined by $x_3^2-F_6(x_0,x_1,x_2)=0$ in $\P(1,1,1,3)$.
Then $X$ is a K3 surface carrying an order $15$ 
automorphism
\[
\sigma_{15}(x_0,x_1,x_2,x_3)=(x_0,\zeta_5 x_1,\zeta_3x_2,x_3)
\]
whose fixed locus is the union of $5$ points, which project to the 
points $(1,0,0)$, $(0,1,0)$, $(0,0,1)$ of $\P^2$. 
Observe that $\sigma_5$ 
fixes the genus two curve defined by $x_1=0$ and the point $(0,1,0,0)$,
while $\sigma_3$ fixes the genus two curve $x_2=0$ and the points $(0,0,1,\pm 1)$.
Since both $\sigma_3$ and $\sigma_5$ fix curves, then none of them is symplectic by \cite{Nikulin},
thus $\sigma_{15}$ is purely non-symplectic. 
This is an example of case a) in Proposition \ref{prop15}.
\end{example}

\begin{example}\label{ex15b}
Consider the elliptic surface with Weierstrass equation
\[
y^2=x^3+(t^5-1)(t^5-a),
\]
with general $a\in \C$. Then $X$ is a K3 surface with the 
automorphism of order $15$:
\[
\sigma_{15}(x,y,t)=(\zeta_3 x, y,\zeta_5 t).
\]
The elliptic fibration has one fiber of type $IV$ over $t=\infty$ 
and $10$ fibers of type $II$. 
The automorphism $\sigma_3$   fixes 
the genus $4$ curve defined by $x=0$, the section at infinity 
and the center of the fiber of type $IV$.
The automorphism $\sigma_5$   fixes 
the smooth fiber over $t=0$ and four points in the fiber of $t=\infty$.
The automorphism $\sigma_{15}$ fixes $3$ points in the fiber over $t=0$ 
and $4$ points in the fiber over $t=\infty$. 
As in the previous Example, $\sigma_3$ and $\sigma_5$ fix curves, thus they are non-symplectic 
and $\sigma_{15}$ is purely non-symplectic.
This is an example of case b) in Proposition \ref{prop15}.
\end{example}

\begin{example}\label{ex15c}

Consider $P=\P(1,1,2)$ with coordinates $x_0,x_1,x_2$ and let $D$ 
be a curve of degree $6$ in $P$ of equation 
\[
G_6(x_0,x_1,x_2)=x_0^5x_1+a_1x_1^2x_2^2+a_2x_1^4x_2+a_3x_1^6+a_4x_2^3=0,
\]
where $a_1,a_2,a_3,a_4\in\C$ are general.
Observe that $D$ is smooth, since it does not pass through 
the singular point $(0,0,1)$ and its partial derivatives only vanish at the origin.
Let $Y\cong \F_2$ be the blow up of the singular point of $P$ and 
let $B$ be the preimage of $D$ in $Y$. 
Since $2[D]\sim -3K_P$ and the resolution $Y\to P$ is crepant,
then $2[B]\sim -3K_{Y}$.
Let $X$ be the triple cover of $Y$ branched along $B$. 
By the Hurwitz formula $X$ is a K3 surface.
Observe that the curve $D$ has the order $5$ automorphism
\[
(x_0,x_1,x_2)\mapsto (\zeta_5x_0,x_1,x_2),
\]
which lifts to an order five automorphism $\varphi$ of $X$.
The composition of $\varphi$ with the covering automorphism 
of $X\to Y$ is an order $15$ automorphism $\sigma$ of $X$.
A birational model of $(X,\sigma)$ in $\P(1,1,2,2)$ is 
\[
x_3^3+G_6(x_0,x_1,x_2)=0,\ \sigma(x_0,x_1,x_2,x_3)=(\zeta_5 x_0,x_1,x_2,\zeta_3 x_3).
\]
Embedding $\P(1,1,2,2)$ in $\P^4$ via the map 
$
(x_0,x_1,x_2,x_3)\mapsto (x_0^2,x_0x_1,x_1^2,x_2,x_3),
$
we also obtain a birational model of $(X,\sigma)$ as complete intersection in $\P^4$ 
with three $A_1$ singularities at $q_i=(0,0,0,\zeta_3^i,1)$, $i=1,2,3$:
\[
\left\{
\begin{array}{l}
y_1^2-y_0y_2=0,\\
y_4^3+y_0^2y_1+a_1y_2y_3^2+a_2y_2^2y_3+a_3y_2^3+a_4y_3^3=0,
\end{array}
\right.
\]
\[
\sigma(y_0,y_1,y_2,y_3,y_4)=(\zeta_5^2y_0,\zeta_5y_1,y_2,y_3,\zeta_3y_4).
\]
The fixed locus of $\sigma_3$ is the genus four curve $y_4=0$.
The fixed locus of  $\sigma_5$ is the union of the curve $C_1$ of genus one defined by $y_0=y_1=0$, 
the point $p_1=(1,0,0,0,0)$ and three points over $q_1,q_2,q_3$.
Finally, $\sigma$ fixes $p_1$ and three points in $C_1\cap C_4$.
This is an example of case c) in Proposition \ref{prop15}.
\end{example}

\begin{proof}[Proof of Theorem \ref{main}, order 15]
Let $X$ be a K3 surface with a purely non-symplectic automorphism $\sigma_{15}$ of order $15$.
By Proposition \ref{prop15},
$\Fix(\sigma_{15})$ contains either $4,5$ or $7$ isolated fixed points.

{\em Case a)}. We first assume that $\Fix(\sigma_{15})$ consists of $5$ fixed points,
 $\Fix(\sigma_{5})$ is the union of a curve $C_2$
of genus $2$ and one point and $\Fix(\sigma_{3})$ is the union of a genus $2$ curve $C'_2$ and $2$ points.
Let $\varphi:X\to \P^2$ be the morphism associated to the linear system $|C'_2|$,
which is a degree two morphism branched along a plane sextic $B$ which 
possibly contracts the smooth rational curves disjoint from $C'_2$  to simple singular points of $B$ \cite{SD}.
Since $[C'_2]$ is fixed by $\sigma_{15}^*$,  the automorphism $\sigma_{15}$ 
descends to an automorphism $\bar\sigma_{15}$ of $\P^2$. 
Let $\bar\sigma_3=\bar\sigma_{15}^5$ and  $\bar\sigma_5=\bar\sigma_{15}^3$.
Up to a projectivity we can assume that $\bar\sigma_{15}$, and thus $\bar\sigma_3$ and $\bar\sigma_5$ are diagonal.
Observe that both $\bar\sigma_3$ and $\bar\sigma_5$ must fix pointwise a line and a point in $\P^2$,
since both $\sigma_3$ and $\sigma_5$ fix pointwise a curve of positive genus.
Moreover, by the previous description, the two lines must be distinct. 
Thus we can assume that 
\[
\bar\sigma_3(x_0:x_1:x_2)=(x_0:x_1:\zeta_3x_2)\quad 
\bar\sigma_5(x_0:x_1:x_2)=(x_0:\zeta_5x_1:x_2).
\]
The branch sextic $B$ of $\varphi$ is invariant for $\bar\sigma_{15}$.
Observe that $B$ can not contain a line fixed by either $\bar\sigma_3$ or $\bar\sigma_5$ 
since otherwise $\Fix(\sigma_3)$ and $\Fix(\sigma_5)$ would contain a smooth rational curve.
This implies that $B$ is defined by an equation of the form $F_6(x_0,x_1,x_2)=0$ 
as given in Example \ref{ex15a}.
If either $a_2$ or $a_3$ vanishes, then $B$ would contain a line.
If $a_1=0$, then $B$ would contain a singular point of type $E_8$, 
whose central component would be fixed by both $\sigma_3$ and $\sigma_5$, a contradiction.
Thus $a_1a_2a_3\not=0$, in particular $B$ is smooth. 
Up to rescaling the variables, an equation for $X$ is the one given 
in Table \ref{1dim} with $a\in \C$.

{\em Case b)}. We now consider the case when $\Fix(\sigma_{15})$ consists of $7$ points, 
$\Fix(\sigma_5)$ is the union of an elliptic curve $C_1$ and $4$ points and 
$\Fix(\sigma_3)$ is the union of a genus $4$ curve $C_4$, a rational curve $R$ and one point.
Let $\pi:X\to \P^1$ be the morphism associated to the linear system $|C_1|$, which
is an elliptic fibration.  Since $C_1$ is invariant for $\sigma_{15}$, then the elliptic fibration 
is invariant for $\sigma_{15}$. 
On the other hand, since $\sigma_3$ fixes the curve $C_4$ of genus $g>1$, then 
it induces the identity on $\P^1$. 
The automorphism $\sigma_3$ acts on $C_1$ and has fixed points in $C_1\cap C_4$, 
thus it has exactly $3$ fixed points by the Riemann-Hurwitz formula.
This implies that $C_1\cdot C_4\leq 3$. Moreover, $C_1\cdot C_4>1$ since otherwise 
the restriction of $\pi$ to $C_4$ would be an isomorphism onto $\P^1$.
Thus $C_1\cdot C_4$ is either $2$ or $3$.
We now show that the second case does not appear. 

If $C_1\cdot C_4=3$, then the curve $R$ can not intersect $C_1$, since otherwise 
$C_1$ would contain more than three fixed points of $\sigma_5$.
Thus $R$ must be contained in a reducible fiber $F$ of $\pi$. 
The fiber $F$ is invariant for $\sigma_3$, it can only contain 
an isolated fixed point of $\sigma_3$ and $C_4\cdot F=3$.
By \cite[Lemma 4.1]{ArtebaniSarti} $F$ should be of type $I_0^*$,
but this contradicts the fact that the rank of the invariant lattice of $\sigma_3$ 
is $4$ by \cite[Theorem 2.2]{ArtebaniSarti}.

Thus $C_1\cdot C_4=2$. Since the general fiber of $\pi$ must contain 
three fixed points of $\sigma_3$ by the Riemann-Hurwitz formula,
then the curve $R$ must be a section of $\pi$.
Thus $\pi$ is a jacobian elliptic fibration invariant for an order three automorphism 
and with a fixed section. This implies that up to a coordinate change $\pi$ has 
Weierstrass equation 
\[
y^2=x^3+p(t),
\]
with $\sigma_3(x,y,t)=(\zeta_3x,y,t)$, where $\deg(p)\leq  12$. 
In these coordinates $C_1$ is the fiber over $t=0$ and $C_4$ is the curve $x=0$.
Since $\sigma_5$ has order $5$ on $R$, then it induces an order five automorphism on $\P^1$ 
which can be assumed to be $\bar\sigma_5(t)=\zeta_5t$.
Since $\sigma_5$ is the identity when $t=0$, then $\sigma_5(x,y,t)=(x,y,\zeta_5t)$.
Thus up to a coordinate change $\pi$ has Weierstrass equation of the form
\[
y^2=x^3+(t^5-1)(t^5-a),
\]
with $a\in \C$ and $\sigma_{15}(x,y,t)=(\zeta_3x,y,\zeta_5t)$,
as in Example \ref{ex15b}.

{\em Case c)}. Finally, assume that $\Fix(\sigma_{15})$ consists of $4$ points,
$\Fix(\sigma_5)$ is the union of an elliptic curve $C_1$ and $4$ points and 
$\Fix(\sigma_3)$ is a curve of genus $4$.
Following the same argument in the proof of Lemma \ref{g50} 
we obtain that the quotient of $X$ by $\sigma_3$ is 
a smooth rational surface $Y$ isomorphic to a Hirzebruch surface $\mathbb F_r$ for some even $r\geq 0$. 
This quotient is a cyclic degree three covering branched along a smooth curve $B$ of genus four whose class 
is $[B]=\frac{3(r+2)}{2}f+3e$, where $f^2=0, e^2=-r, f\cdot e=1$.
Since $B$ is smooth, then $[B]\cdot e=\frac{6-3r}{2}\geq 0$, thus $r\leq 2$.

The case when $r=0$, i.e. $Y\cong \P^1\times\P^1$ can be excluded as follows.
In this case $B$ is a curve of type $(3,3)$.
The automorphism $\sigma_5$ descends to an automorphism  $\bar\sigma_5$ of $\P^1\times \P^1$ 
which has a fixed curve, thus up to a coordinate change we can assume
\[
\bar\sigma_5:(x_0:x_1),(y_0:y_1)\mapsto (x_0:x_1),(\zeta_5y_0:y_1).
\]
However there is no curve of type $(3,3)$ which is invariant for this automorphism,
giving a contradiction.

Thus $Y\cong \mathbb F_2$, $[B]=6f+3e$ and $[B]\cdot e=0$.
After contracting the $(-2)$-curve of $Y$ we obtain the surface $P\cong \P(1,1,2)$. 
Since the contraction is a crepant morphism the image of $B$ is a smooth curve $D$ with 
$2[D]\sim -3K_P$, i.e. of degree $6$. 
The automorphism $\sigma_5$ descends to an automorphism  $\bar\sigma_5$  
of $P$ which preserves $D$ and fixes the image of the curve $C_1$.
This implies that, after a coordinate change, we can assume $\bar\sigma_5(x_0,x_1,x_2)=(\zeta_5x_0,x_1,x_2)$.
The equation of $D$ must be invariant for $\bar\sigma_5$, thus it is of the form given in Example \ref{ex15c}. 
If all the coefficients of $G_6$ are non-zero, then one obtains the equation in Table \ref{1dim} up to rescaling the variables.
\end{proof}

\subsection{Order 30}

\begin{proposition}\label{prop30}
Let $X$ be a K3 surface with a purely non-symplectic automorphism $\sigma_{30}$ of order $30$
such that $\dim(V^\sigma)=2$.
Then there are two possibilities for the fixed locus of $\sigma_{30}$ and of its powers:
\[
\begin{array}{c||c|c|c|c|c}
& \Fix(\sigma_{30}) & \Fix(\sigma_{15}) & \Fix(\sigma_{5})&\Fix(\sigma_{3})&\Fix(\sigma_{2})\\
\hline
a) & \{p_1\} &  \{p_1,\ldots, p_5\} & C_{2}\sqcup \{p_1\}& C'_2\sqcup\{p_2,p_3\} &C_{10}\\
b) & \{p_1, p_2,p_5\}&\{p_1,\ldots,p_7\}&C_1\sqcup\{p_1,\ldots,p_4\}&C_4\sqcup R\sqcup\{p_1\}&C_9\sqcup R\\
\end{array}
\]
where $C_g, C_g'$ have genus $g$ and $g(R)=0$.
Moreover, $d=(2,0,1,0,0,0,1,1)$ in case a) and $d=(2,0,0,1,0,0,1,3)$ in case b).

Finally ${\rm NS}(X)\cong U(3)\oplus A_2\oplus A_2$ for a very general K3 surface $X$ in case a) 
and ${\rm NS}(X)\cong H_5\oplus A_4$ for a very general K3 surface $X$ in case b). 
\end{proposition}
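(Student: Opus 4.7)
The plan is to reduce to Proposition \ref{prop15} applied to $\sigma_{15}=\sigma_{30}^2$, and then to use the Lefschetz formulas together with Nikulin's classification of non-symplectic involutions to pin down the remaining data. The automorphism $\sigma_{15}$ is purely non-symplectic of order $15$ with $\dim V^{\sigma_{15}}=2$: indeed, $T_X$ is a $\Z[\zeta_{30}]$-module by \cite[Sec.~3]{Nikulin}, so $\rk T_X=\varphi(30)\,d_{30}=16$ and all eigenvalues of $\sigma_{15}^*$ on $T_X\otimes \C$ are primitive $15$th roots of unity, forcing $\dim V^{\sigma_{15}}=\rk T_X/\varphi(15)=2$. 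Hence $(X,\sigma_{15})$ realizes one of the two cases of Proposition \ref{prop15}. Tracking, via squaring of eigenvalues, which $\zeta_{30}^j$-eigenspace of $\sigma_{30}^*$ maps to the $\zeta_{15}^i$-eigenspace of $\sigma_{15}^*$, one obtains
\[
d_{15}(\sigma_{15})=d_{30}+d_{15},\ d_{5}(\sigma_{15})=d_{10}+d_5,\ d_{3}(\sigma_{15})=d_6+d_3,\ d_1(\sigma_{15})=d_2+d_1,
\]
which, matched against the vectors $(2,1,0,2)$ and $(2,0,1,4)$ from Proposition \ref{prop15}, yield $d_{15}=0$ together with the partial sums $(d_{10}+d_5,\,d_6+d_3,\,d_2+d_1)=(1,0,2)$ in case a) or $(0,1,4)$ in case b).

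Next, I would pin down the individual $d_k$ and the fixed loci. Plugging the remaining unknowns into the topological Lefschetz formulas for every power $\sigma_i$, $i\mid 30$, expresses $\chi_i=\chi(\Fix(\sigma_i))$ as a linear combination of the $d_k$. The constraint $\Fix(\sigma_{30})\subseteq\Fix(\sigma_{15})$, a set of isolated points by Proposition \ref{prop15}, forces $\chi_{30}\geq 0$; meanwhile $\rk S(\sigma_2)=4d_5+2d_3+d_1$ must be compatible, via Nikulin's $(r,a,\delta)$ invariants, with a fixed locus of Euler characteristic $\chi_2$. Combining these with the $\sigma_{30}$-invariance of the fixed curves of $\sigma_5$ and $\sigma_3$ and Riemann--Hurwitz applied to the induced actions singles out the $d$-vectors $(2,0,1,0,0,0,1,1)$ in case a) and $(2,0,0,1,0,0,1,3)$ in case b): in case a) one finds $\chi_{30}=1$ and $\chi_2=-18$, so $\Fix(\sigma_2)=C_{10}$ by Nikulin; in case b) one finds $\chi_{30}=3$ and $\chi_2=-14$, so $\Fix(\sigma_2)=C_9\sqcup R$. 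The holomorphic Lefschetz formula for $\sigma_{30}$ (with $\alpha=0$, as $\sigma_{30}$ has no fixed curves) then determines the local types of its isolated fixed points, identifying them among the points of $\Fix(\sigma_{15})$.

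The N\'eron-Severi lattice follows as in Proposition \ref{prop15}: the generality hypothesis gives $\rk {\rm NS}(X)=22-2\varphi(30)=6$ by Remark \ref{ns}, which equals $\rk S(\sigma_3)=d_1+d_2+4d_5+4d_{10}=6$ in case a) and $\rk S(\sigma_5)=d_1+d_2+2d_3+2d_6=6$ in case b). Since both invariant lattices are contained in ${\rm NS}(X)$ by Remark \ref{ns}, we conclude ${\rm NS}(X)\cong U(3)\oplus A_2\oplus A_2$ in case a) and ${\rm NS}(X)\cong H_5\oplus A_4$ in case b), using the lattice identifications of \cite{ArtebaniSartiTaki}. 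The main obstacle is the second step: several subcases of $(d_{10},d_5)$, $(d_6,d_3)$ and $(d_2,d_1)$ satisfy both the partial-sum constraints and the bound $\chi_{30}\geq 0$, and Nikulin's invariants alone do not uniquely determine $\Fix(\sigma_2)$; resolving this ambiguity requires exploiting the $\sigma_{30}$-equivariance of the fixed curves of $\sigma_3$ and $\sigma_5$ together with the explicit geometric descriptions from Examples \ref{ex15a} and \ref{ex15b}.
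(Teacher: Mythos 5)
Your setup coincides with the paper's: reduce to Proposition \ref{prop15} via $\sigma_{15}=\sigma_{30}^2$, track eigenspaces to get $d_{15}=0$ and the partial sums $(d_{10}+d_5,d_6+d_3,d_2+d_1)$, and cut down the candidates with the topological Lefschetz formulas and $\chi_{30}\geq 0$. Your justification that $\dim(V^{\sigma_{15}})=2$ via the $\Z[\zeta_{30}]$-module structure of $T_X$ is in fact cleaner than the paper's appeal to one-dimensional families. Up to that point the two arguments agree: in case a) one is left with three candidate vectors $d$ (the paper's Table \ref{tab:30}, with $\chi_2\in\{-18,-16,-8\}$) and in case b) with five (Table \ref{tab:30-1}, with $\chi_2\in\{-16,-14,-12,-10,-8\}$).

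The genuine gap is the step you yourself flag as ``the main obstacle'': you assert that ``one finds $\chi_{30}=1$ and $\chi_2=-18$'' in case a) and ``$\chi_{30}=3$ and $\chi_2=-14$'' in case b), but nothing in your argument actually eliminates the competing rows, and the tools you list up to that point (Nikulin's invariants, Riemann--Hurwitz, $\sigma_{30}$-invariance of the fixed curves) do not suffice: all remaining candidates are consistent with them. The paper closes this by invoking the explicit models already established in the proof of Theorem \ref{main} for $n=15$. In case a), $X$ is the double cover of $\P^2$ of Example \ref{ex15a}; since every candidate has $\chi_2\leq -8$, the involution $\sigma_2$ cannot descend to a nontrivial involution of $\P^2$ (that would confine the fixed curves of $\sigma_2$ to genus at most $2$, forcing $\chi_2\geq -2$), so $\sigma_2$ is the covering involution, fixes the genus-$10$ branch sextic, and $\chi_2=-18$ picks out the first row. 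In case b), $X$ carries the $\sigma_{30}$-invariant elliptic fibration of Example \ref{ex15b}; $\chi_2\leq -8$ forces $\sigma_2$ to fix a curve of genus $>1$, necessarily transverse to the fibers, so $\sigma_2$ acts trivially on the base, fixes the section, and is $(x,y,t)\mapsto(x,-y,t)$; its fixed locus is the section plus the genus-$9$ curve $y=0$, giving $\chi_2=-14$. The same geometric description, not the holomorphic Lefschetz formula alone, is what identifies which points of $\Fix(\sigma_{15})$ survive in $\Fix(\sigma_{30})$. Without carrying out this last geometric step your proof determines neither the vector $d$ nor the fixed loci, so as written the argument is incomplete.
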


\begin{proof}
Let $\chi_{i}=\chi(\Fix(\sigma_{i})), i=30,15,5,3,2$.
First observe that given a one dimensional family of K3 surfaces admitting a purely non-symplectic automorphism of order 30, 
every element in the family admits a purely non-symplectic automorphism of order 15. 
Thus this corresponds to one of the three families in Proposition \ref{prop15} 
and the vector $(\chi_{15},\chi_5,\chi_3)$ is either 
$(5,-1,0)$, $(7,4,-3)$ or $(4,4,-6)$.

Decomposing $H^2(X,\C)$ as the direct sum of the eigenspaces of $\sigma^*$ we obtain:
\begin{equation}
22=8d_{30}+8d_{15}+4d_{10}+2d_{6}+4d_5+2d_3+d_2+d_1.
\label{30}
\end{equation}
Assuming $d_{30}=2$, this gives $d_{15}=0$.
Using the topological Lefschetz fixed point formulas we compute the topological Euler characteristic 
of the fixed loci of powers of $\sigma_{30}$ by:
\begin{equation}
\begin{cases}
\chi_{30}&=d_{10}+d_6-d_5-d_3-d_2+d_1\\
\chi_{15}&=-d_{10}-d_6-d_{5}-d_{3}+d_2+d_1+4\\
\chi_5&=-d_{10}+2d_6-d_5+2d_3+d_2+d_1-2\\
\chi_3&=4d_{10}-d_6+4d_5-d_3+d_2+d_1-6\\
\chi_2&=-4d_{10}-2d_6+4d_5+2d_3-d_2+d_1-14.
\end{cases}\label{sistema30}
\end{equation}

We first assume to be in case a) of Proposition \ref{prop15}, i.e. 
$\Fix(\sigma_{5})$ is the union 
of a smooth curve $C_2$ of genus $2$ and a point $p_1$,
$\Fix(\sigma_{3})$ is the union of 
a smooth curve $C'_2$ of genus $2$ and two isolated points 
and $\Fix(\sigma_{15})$ consists of $5$ isolated points $p_1,\dots,p_5$.
In particular $(\chi_{15},\chi_5,\chi_3)=(5,-1,0)$.
Moreover, since the fixed locus of $\sigma_{15}$ only contains isolated points,
the same holds for $\sigma_{30}$. Thus $\chi_{30}\geq 0$.
By \eqref{30} and \eqref{sistema30} we get the possibilities in Table \ref{tab:30}.
{\small
\begin{table}[h]
\begin{tabular}{cccccccc|ccccc}
 $d_{30}$&$d_{15}$&$d_{10}$&$d_{6}$&$d_{5}$&$d_{3}$&$d_{2}$&$d_1$&$\chi_{30}$&$\chi_{15}$&$\chi_5$&$\chi_3$&$\chi_2$\\
 \hline
 \rowcolor{Lgray}    2& 0& 1& 0& 0& 0& 1& 1& 1& 5& -1& 0& -18 \\
     2& 0& 1& 0& 0& 0& 0& 2& 3& 5& -1& 0& -16 \\
     2& 0& 0& 0& 1& 0& 0& 2& 1& 5& -1& 0& -8    
     \end{tabular}
     \vspace{0.2cm}
\caption{}
\label{tab:30}
\end{table}}

In particular $\chi_{30}$ is either $3$ or $1$,
thus 
$\Fix(\sigma_{30})$ is either the union of $p_1$ and two of the $p_i$'s with $i\geq 2$ (and the other two are exchanged)
or $\Fix(\sigma_{30})=\{p_1\}$ and $\sigma_{30}$ has no fixed points on $C_2$.
By the proof of Theorem \ref{main} in the case $n=15$,
the linear system associated to $C_2$ 
defines a double cover $\varphi:X\to \P^2$ which can be defined in $\P(1,1,1,3)$ by an equation of the form
\[
y^2=x_0^6+x_0x_1^5+x_2^6+ax_0^3x_2^3,
\]
where $a\in \C$ and in these coordinates
$
\sigma_{15}(x_0,x_1,x_2,y)=(x_0,\zeta_5x_1,\zeta_3x_2,y).
$
Since $\sigma_{30}$ preserves $C_2$, then it induces an 
automorphism $\bar\sigma_{30}$ of $\P^2$.
The involution $\sigma_2$ either induces the identity or an involution of $\P^2$.
The latter is not possible since the fixed locus of $\sigma_2$ would 
contain a curve of genus at most $2$, while $\chi_2\leq -8$ by Table \ref{tab:30}.
 Thus $\sigma_2$ coincides with the  (automorphism induced by) 
the covering involution of $\varphi$, which fixes a smooth genus $10$ curve, so that $\chi_2=-18$ 
and  $\chi_{30}=1$ by Table \ref{tab:30}. Thus $\sigma_{30}$ fixes a unique point.
Since $C_2$ is invariant for $\sigma_{30}$, then $\varphi(C_2)$ is a line 
which contains two fixed points for $\bar \sigma_{30}$.
Since $\chi_{30}=1$,  their preimages by $\varphi$ 
are four points exchanged in pairs by $\sigma_2$.

Assume now to be in case b) of Proposition \ref{prop15}, i.e. 
$\Fix(\sigma_{3})$ is the union of a curve $C_4$ of genus $4$, 
a rational curve $R$ and a point, 
$\Fix(\sigma_{5})$ is union of an elliptic curve $C_1$ and four points, 
and $\Fix(\sigma_{15})$ is the union of $7$ points (3 on $C_1$).
In particular $(\chi_{15},\chi_5,\chi_3)=(7,4,-3)$.
Moreover, $\chi_{30}\geq 0$ since $\Fix(\sigma_{15})$ only contains 
isolated points, and thus the same holds for $\sigma_{30}$.
There are five possible vectors $d$ such that $(\chi_{15},\chi_5,\chi_3)=(7,4,-3)$
(see Table \ref{tab:30-1}). 
{\small
\begin{table}[h]
\begin{tabular}{cccccccc|ccccc}
 $d_{30}$&$d_{15}$&$d_{10}$&$d_{6}$&$d_{5}$&$d_{3}$&$d_{2}$&$d_1$&$\chi_{30}$&$\chi_{15}$&$\chi_5$&$\chi_3$&$\chi_2$\\
 \hline
     2& 0& 0& 1& 0& 0& 2& 2& 1& 7& 4& -3& -16 \\
   \rowcolor{Lgray}2& 0& 0& 1& 0& 0& 1& 3& 3& 7& 4& -3& -14 \\
     2& 0& 0& 0& 0& 1& 1& 3& 1& 7& 4& -3& -10 \\
  2& 0& 0& 1& 0& 0& 0& 4& 5& 7& 4& -3& -12 \\
    2& 0& 0& 0& 0& 1& 0& 4& 3& 7& 4& -3& -8 \\
\end{tabular}
   \vspace{0.2cm}
\caption{}
\label{tab:30-1}
\end{table}}
By the proof of Theorem \ref{main} in the case $n=15$,
 $X$ admits an elliptic fibration $\pi:X\to \P^1$ with Weierstrass equation
\[
y^2=x^3+(t^5-1)(t^5-a),
\]
with $a\in \C$ and $\sigma_{15}(x,y,t)=(\zeta_3x,y,\zeta_5t)$.
By the same argument in the proof of Theorem \ref{main} in the case $n=15$,
using  \cite[Lemma 5]{ArtebaniSarti4}, one concludes that the elliptic fibration 
is invariant for $\sigma_{30}$. Since $\chi_2\leq -8$, then $\sigma_2$ 
fixes a curve of genus $>1$. Such curve is clearly transverse to all fibers 
of $\pi$, thus $\sigma_2$ induces the identity on the basis of the fibration.
Moreover, $\sigma_2$ must fix the section at infinity $R$ of the fibration, 
since it preserves $R$ and  each fiber of $\pi$.
This implies that $\sigma_2(x,y,t)=(x,-y,t)$.
In particular $\sigma_2$ fixes $R$ and the curve defined by $y=0$,
which has genus $9$, so that $\chi_2=-14$.
Moreover $\sigma_{30}$ fixes three points: 
two points on $R$ and the center of the fiber of type $IV$ over $t=\infty$.

Assume now to be in case c) of Proposition \ref{prop15}, i.e. 
$\Fix(\sigma_{3})$ is  a curve $C_4$ of genus $4$, 
$\Fix(\sigma_{5})$ is union of an elliptic curve $C_1$ and four points, 
and $\Fix(\sigma_{15})$ is the union of $4$ points.
In particular $(\chi_{15},\chi_5,\chi_3)=(4,4,-6)$.
Moreover, $\chi_{30}\geq 0$ since $\Fix(\sigma_{15})$ only contains 
isolated points, and thus the same holds for $\sigma_{30}$.
By the proof of Theorem \ref{main} in the case $n=15$, 
$X$ is the minimal resolution of the double cover of $P=\P(1,1,2)$
branched along a smooth curve $D$ of degree $6$ not passing 
through the singular point of $P$. 
The automorphism induced by $\sigma_5$ in $P$ can be assumed to 
be $(x_0,x_1,x_2)\mapsto (\zeta_5 x_0,x_1,x_2)$.
The automorphism $\sigma_2$, since it commutes with $\sigma_3$,  
induces an involution $\bar\sigma_2$ of $P$ which preserves the curve $D$. 
Moreover it can be also diagonalized. However, no diagonal involution leaves 
invariant the general equation as in Example \ref{ex15c}, thus this case 
is not possible.

The N\'eron-Severi lattice of a very general $X$ in cases a) and b)   is clearly the same as in Proposition \ref{prop15}.
\end{proof}

\begin{example}\label{ex30a}
The double cover $X$ of $\P^2$ in Example \ref{ex15a}
carries the order $30$ automorphism 
\[
\sigma_{30}(x_0,x_1,x_2,x_3)=(x_0,\zeta_5 x_1,\zeta_3x_2,-x_3).
\]
Observe that for a general choice of the coefficients the fixed locus of $\sigma_2$ is the smooth plane sextic defined by 
$x_3=0$, which has genus $10$. Moreover, the fixed locus of $\sigma_{30}$ consists 
of the point $(0,1,0,0)$. This is an example of case a) in Proposition \ref{prop30}.
\end{example}

\begin{example}\label{ex30b}
The elliptic K3 surface in Example \ref{ex15b}
carries the order $30$ automorphism
\[
\sigma_{30}(x,y,t)=(\zeta_3 x, -y,\zeta_5 t).
\]
Observe that for general $a\in \C$ the fixed locus of $\sigma_2$ 
is the curve $y=0$, which has genus $9$.
Moreover, as observed in the proof of Proposition \ref{prop30}, 
the fixed locus of $\sigma_{30}$ consists of two points in the section at infinity (over $t=0$ and $t=\infty$) 
and the center of the fiber of type $IV$ over $t=\infty$.  This is an example of case b) in Proposition \ref{prop30}.
\end{example}

\begin{proof}[Proof of Theorem \ref{main}, order 30]
Let $X$ be a K3 surface with a purely non-symplectic automorphism $\sigma$ of order $30$ 
such that $\dim(V^\sigma)=2$. 
It is straightforward from the proof of Proposition \ref{prop30} that, up to isomorphism, 
$(X,\sigma)$ belongs to one of 
the families in Examples \ref{ex30a} and \ref{ex30b}.
\end{proof}

\subsection{Order 16}
Purely non-symplectic automorphisms of order 16 
on K3 surfaces have been classified in \cite{AlTabbaaSartiTaki}.
The following result has the same statement as \cite[Theorem 4.1]{AlTabbaaSartiTaki},
but we provide a slightly different proof since we use the weaker hypothesis 
$\dim(V^\sigma)=2$.

\begin{proposition}\label{teo-16}
Let $\sigma_{16}$ be a purely non-symplectic automorphism of order $16$ 
of a K3 surface $X$ and assume that 
$\dim(V^\sigma)=2$ (or equivalently $S(\sigma_2)$ has rank $6$). 
Then there are two possibilities for the fixed locus of $\sigma_{16}$  
and of its powers:
{\small 
\[
\begin{array}{c||c|c|c|c}
& \Fix(\sigma_{16}) & \Fix(\sigma_{8}) & \Fix(\sigma_{4})&\Fix(\sigma_{2})\\
\hline
a) & \{p_1,\dots,p_6\}\sqcup R & \{p_1,\dots,p_6\}\sqcup R  &  \{p_1,\dots,p_6\}\sqcup R &  C_7\sqcup R\sqcup R'  \\
b) & \{p_1,p_2, p_7,p_8\}&  \{p_1,\dots,p_6\}\sqcup R&   \{p_1,\dots,p_6\}\sqcup R  & C_6\sqcup R 
\end{array}
\]
}
where $g(C_6)=6$, $g(C_7)=7$ and $g(R)=g(R')=0$.
Moreover, $d=(2,0,0,0,6)$ in case a) and $d=(2,0,0,2,4)$ in case b).

Finally ${\rm NS}(X)\cong U\oplus D_4$ for a very general $X$ in case a) and ${\rm NS}(X)\cong U(2)\oplus D_4$ 
for a very general $X$ in case b).
\end{proposition}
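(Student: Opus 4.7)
The plan is to follow the template used for orders $22$, $15$ and $30$: from the dimension identity for the eigenspace decomposition of $\sigma^*$ on $H^2(X,\C)$, extract numerical constraints on the vector $d$; then combine the topological Lefschetz formulas applied to $\sigma_{16}$ and its powers with the Nikulin classification of non-symplectic involutions, and with the holomorphic Lefschetz formula, to determine the fixed loci of all the powers; finally, identify ${\rm NS}(X)$ as the invariant lattice $S(\sigma_2)$ via Remark \ref{ns}.

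\smallskip

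The first observation is that $22 = 8d_{16} + 4d_8 + 2d_4 + d_2 + d_1$, so $d_{16}=2$ is automatically equivalent to $\rk S(\sigma_2) = d_1 + d_2 + 2d_4 + 4d_8 = 6$, confirming the parenthetical remark in the statement. Since $\sigma_2 = \sigma^8$ acts as $-1$ on $V^\sigma$ (total multiplicity $\varphi(16)\cdot 2 = 16$) and as $+1$ on $S(\sigma_2)$, the topological Lefschetz formula gives $\chi(\Fix(\sigma_2)) = -8$. Nikulin's classification of non-symplectic involutions with $r=6$ then selects the two listed configurations, $C_7 \sqcup R \sqcup R'$ with $(r,a,\delta)=(6,2,0)$ and $C_6 \sqcup R$ with $(r,a,\delta)=(6,4,0)$, using $g-k = 11-r = 5$ together with the $2$-elementary lattice constraints in signature $(1,5)$.

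\smallskip

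The main work is to show $d_4 = d_8 = 0$ and identify $\Fix(\sigma_4)$, $\Fix(\sigma_8)$ and $\Fix(\sigma_{16})$. I would use the inclusion chain $\Fix(\sigma_{16}) \subseteq \Fix(\sigma_8) \subseteq \Fix(\sigma_4) \subseteq \Fix(\sigma_2)$: since $\sigma_4^2 = \sigma_2$ acts trivially on $\Fix(\sigma_2)$, the automorphism $\sigma_4$ restricts to an involution on each component of $\Fix(\sigma_2)$, so every component is either fixed pointwise by $\sigma_4$ or contributes only finitely many isolated fixed points, counted by Riemann--Hurwitz. Candidates with $d_4>0$ or $d_8>0$ would force $\sigma_4$ (resp.\ $\sigma_8$) to fix pointwise a curve of positive genus inside $\Fix(\sigma_2)$; confronting this with the classifications of order-$4$ and order-$8$ purely non-symplectic automorphisms in \cite{ArtebaniSarti} and \cite{TabbaaSarti} rules out all such vectors. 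With $d_4 = d_8 = 0$ the trace computations simplify to $\chi(\Fix(\sigma_4)) = \chi(\Fix(\sigma_8)) = 8$ and $\chi(\Fix(\sigma_{16})) = 8 - 2 d_2$; positivity forces $\sigma_4$ (hence $\sigma_8$) to fix one rational component of $\Fix(\sigma_2)$ pointwise, and Riemann--Hurwitz on the positive-genus component forces exactly six further isolated fixed points, yielding $\Fix(\sigma_4) = \Fix(\sigma_8) = \{p_1,\ldots,p_6\} \sqcup R$ in both cases. The holomorphic Lefschetz formula for $\sigma_{16}$, with correction term $\alpha = \sum_{C \subset \Fix(\sigma_{16})}(1-g(C))$, then separates case (a), where $\sigma_{16}$ fixes $R$ pointwise and $\alpha = 1$, $d_2 = 0$, from case (b), where $\Fix(\sigma_{16})$ is purely isolated, $\alpha = 0$ and $d_2 = 2$; the remaining values of $d_2$ are excluded because the associated linear system in the point-types $a_{i,16}$ admits no nonnegative integer solution compatible with the already determined $\Fix(\sigma_4)$.

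\smallskip

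For the N\'eron--Severi lattice of a very general $X$, the rank is $22 - 2\varphi(16) = 6$, and Remark \ref{ns} combined with $\rk S(\sigma_2) = 6$ gives ${\rm NS}(X) = S(\sigma_2)$. Nikulin's tables for $2$-elementary lattices of signature $(1,5)$ then read off $U \oplus D_4$ when $(r,a,\delta)=(6,2,0)$ (case a) and $U(2) \oplus D_4$ when $(r,a,\delta)=(6,4,0)$ (case b). I expect the main obstacle to be the systematic exclusion of the intermediate $d$-vectors with $d_4$ or $d_8$ nonzero: this requires careful bookkeeping that combines the topological and holomorphic Lefschetz constraints for each power with the known fixed-locus classifications for orders $4$ and $8$, rather than any single decisive argument.
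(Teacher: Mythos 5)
Your overall strategy is the same as the paper's: eigenspace decomposition, topological Lefschetz formulas for each power, Nikulin's classification for $\sigma_2$, Riemann--Hurwitz on the fixed curve, holomorphic Lefschetz for $\sigma_{16}$, and identification of ${\rm NS}(X)$ with $S(\sigma_2)$. However, three of your intermediate steps are not correct as stated. First, Nikulin's classification with $r=6$ does \emph{not} select only the two listed configurations: $(r,a)=(6,6)$ gives a third possibility, $\Fix(\sigma_2)=C_5$ with no rational component, which must be excluded separately (the paper does so by first proving that $\Fix(\sigma_4)$ must contain a rational curve). Second, your mechanism for killing $d_4>0$ and $d_8>0$ --- that these would force $\sigma_4$ or $\sigma_8$ to fix pointwise a positive-genus curve --- is only the right mechanism for $d_8$: one computes $\chi_4=8-8d_8$, and $d_8=1$ gives $\chi_4=0$, which via the relation $N_4=2\alpha_4+4$ of \cite[Proposition 1]{ArtebaniSarti4} forces $\alpha_4=-1$, impossible inside $\Fix(\sigma_2)$. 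For $d_4$ the exclusion works differently: $\chi_8=8-3d_4$, and the value $5$ is incompatible with $N_8=2+4\alpha_8$ from \cite[Proposition 2.2]{TabbaaSarti} (which forces $\chi_8\equiv 2 \bmod 6$), while the value $2$ is numerically consistent and is only ruled out because Riemann--Hurwitz for $\sigma_8$ restricted to the positive-genus component of $\Fix(\sigma_2)$ allows exactly $2$ or $6$ fixed points there, hence $\chi_8\in\{4,8\}$. Without these two specific numerical relations, which you gesture at only as ``the classifications,'' the bookkeeping does not close.

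Third, the separation of cases a) and b) cannot be achieved by the holomorphic Lefschetz formula alone. In the configuration $\Fix(\sigma_2)=C_7\sqcup R\sqcup R'$ there is a spurious sub-case with $\chi_{16}=4$, $\alpha_{16}=0$, $N_{16}=4$ that survives all the Lefschetz constraints; the paper eliminates it by observing that the forced point types ($n_{3,16}=n_{7,16}=1$, $n_{8,16}=2$) would place two isolated fixed points of types $A_{3,16}$ and $A_{7,16}$ on the invariant rational curve $R'$, contradicting the local analysis in the proof of \cite[Lemma 4]{ArtebaniSarti4}. Likewise, in the configuration $C_6\sqcup R$ one needs the inequality $N_{16}\geq 2\alpha_{16}+1$ from \cite[Proposition 2]{AlTabbaaSartiTaki} to conclude that $R$ is not pointwise fixed by $\sigma_{16}$. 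These geometric inputs are the actual content of the case separation; your ``no nonnegative integer solution'' criterion would not exclude them. The concluding identification of ${\rm NS}(X)$ with $S(\sigma_2)$ and the lattice computation are fine.
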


 \begin{proof}
Decomposing $H^2(X,\C)$ as the direct sum of the eigenspaces 
of $\sigma_{16}^*$ we obtain:
\begin{equation}
22=8d_{16}+4d_{8}+2d_{4}+d_{2}+d_1.
\label{16}
\end{equation}
Since $d_{16}=2$, 
this implies that $d_{8}$ is either 0 or 1 and gives the 14 possibilities for the vector $d$ in Table \ref{tab16}.
{\small \begin{table}[h]
\begin{tabular}{ccccc|cccc}
$d_{16}$&$d_8$&$d_4$&$d_2$&$d_1$&$\chi_{16}$&$\chi_8$&$\chi_4$&$\chi_2$\\
\hline
 2& 1& 0& 1& 1& 2& 4& 0& -8 \\
     2& 0& 2& 1& 1& 2& 2& 8& -8 \\
     2& 0& 1& 3& 1& 0& 5& 8& -8 \\
     2& 0& 0& 5& 1& -2& 8& 8& -8 \\
     2& 1& 0& 0& 2& 4& 4& 0& -8 \\
     2& 0& 2& 0& 2& 4& 2& 8& -8 \\
     2& 0& 1& 2& 2& 2& 5& 8& -8 \\
     2& 0& 0& 4& 2& 0& 8& 8& -8 \\
     2& 0& 1& 1& 3& 4& 5& 8& -8 \\
     2& 0& 0& 3& 3& 2& 8& 8& -8 \\
     2& 0& 1& 0& 4& 6& 5& 8& -8 \\
   \rowcolor{Lgray}   2& 0& 0& 2& 4& 4& 8& 8& -8 \\
     2& 0& 0& 1& 5& 6& 8& 8& -8 \\
  \rowcolor{Lgray}    2& 0& 0& 0& 6& 8& 8& 8& -8 
     \end{tabular}
        \vspace{0.2cm}
     \caption{}
\label{tab16}
\end{table}}

Let $N_i$ be the number of isolated fixed points of $\sigma_i$, $\chi_i=\chi(\Fix(\sigma_{i}))$
and 
\[
\alpha_i=\sum_{C\subset\Fix(\sigma_i)}(1-g(C))
\] 
for $i\in\{2,4,8,16\}$.
By the topological Lefschetz fixed point 
formula we get
 \begin{equation}
\begin{cases}
\chi_{16}=&=-d_2+d_1+2\\
\chi_{8}=&=-d_{4}+d_2+d_1+2\\
\chi_4=&=-4d_8+2d_4+d_2+d_1+2\\
\chi_2=&=-8d_{16}+4d_8+2d_4+d_2+d_1+2.
\end{cases}\label{sistema16}
\end{equation}
Table \ref{tab16}  shows the values of $(\chi_{16},\chi_8,\chi_4,\chi_2)$ for each possible vector $d$.

Observe that $\chi_2=-8$. By \cite{Nikulin}, $N_2=0$ and
$\Fix(\sigma_2)$ is the union of a curve of genus $g$ and $k$ 
rational curves with $(g,k)=(5,0),(6,1)$ or $(7,2)$.

Moreover, $\chi_4=0$ or $8$.
By \cite[Proposition 1]{ArtebaniSarti4} we have that $N_4=2\alpha_4+4$. 
Since $\chi_4=2\alpha_4+N_4$ one has 
\[
\chi_4=4\alpha_4+4.
\]
If $\chi_4=0$, then $\alpha_4=-1$, but this is not possible since $\Fix(\sigma_4)\subseteq\Fix(\sigma_2)$ and it is not compatible with the aforementioned possibilities for $\Fix(\sigma_2)$.
Thus $\chi_4=8$, $\alpha_4=1$ and $\Fix(\sigma_4)$ contains a rational curve (and no more curves) and 6 points.
This implies that  the case $(g,k)=(5,0)$ is impossible.

The cases $(g,k)=(6,1)$ and $(7,2)$ are treated in Lemma \ref{lemma1} and Lemma \ref{lemma2}. 
We conclude that the only admissible cases are the ones in Proposition \ref{teo-16}.
Observe that both in case a) and b) we have that $d_4=d_8=0$ and $d_1+d_2=6$.
This implies that $S(\sigma_8)=S(\sigma_4)=S(\sigma_2)$ has rank $6$. 
If $X$ is very general, then $\rk\,{\rm NS}(X)=22-2\varphi(20)=6$ and thus by Remark \ref{ns} 
${\rm NS}(X)=S(\sigma_2)$. 
Moreover by \cite[Theorem 4.2.2]{Nikulin-inv} or \cite[Figure 1]{ArtebaniSartiTaki} 
 the invariant lattice of $\sigma_2$ is isometric to $U\oplus D_4$
 in case a) and $U(2)\oplus D_4$ in case b), see \cite{AlTabbaaSartiTaki}.
\end{proof} 

\begin{lemma}\label{lemma1}
If $Fix(\sigma_2)$ is the union of a curve of genus 6 and a rational curve, then the fixed loci of $\sigma_{16},\sigma_8$ and $\sigma_4$ are as follows:
\[\small
\Fix(\sigma_{16})= \{p_1,p_2, p_7,p_8\},\ \Fix(\sigma_8)=\{p_1,\dots,p_6\}\sqcup R,\] 
\[
\Fix(\sigma_4)=\{p_1,\dots,p_6\}\sqcup R. \]
\end{lemma}

\begin{proof} Let $C_6$ (resp. $R$) be the smooth curve  of genus $6$ (resp. rational curve) in $\Fix(\sigma_2)$.
By the previous analysis we know that $\sigma_4$ fixes pointwise $R$ and 
has $6$ isolated fixed points $p_1,\dots,p_6$ on $C_6$.
 
By the Riemann-Hurwitz formula for $\sigma_{8}$ on $C_6$, 
we observe that either a) $2$ of the $p_i$'s are fixed and the other four are permuted in pairs 
by $\sigma_8$ or b) the points $p_1,\dots,p_6$ are fixed points for $\sigma_8$.
Observe that case a) is not possible since $\chi_4=8$ and $\chi_8=4$ does not appear 
in Table \ref{tab16}.

 By the Riemann-Hurwitz formula for $\sigma_{16}$ on $C_6$, we obtain that
$\sigma_{16}$ fixes $2$ of the $p_i$'s and exchanges the other four  
 in pairs. Thus $(\chi_{16},\chi_8,\chi_4,\chi_2)=(4,8,8,-8)$.

Observe that six of the fixed points of $\sigma_8$ lie on a curve fixed pointwise by $\sigma_2$ 
and not by $\sigma_4$, thus the local action of $\sigma_8$ at such points is  
either of type $A_{2,8}$ or $A_{3,8}$. 
By \cite[Proposition 2.2]{TabbaaSarti} we have that $6=2+4\alpha_8$, thus $\alpha_8=1$.
This implies that $N_8=6$ and the curve $R$ is pointwise fixed by $\sigma_8$.
On the other hand, by \cite[Proposition 2]{AlTabbaaSartiTaki}, 
$N_{16}$ is bigger or equal to $2\alpha_{16}+1$. This implies that $\alpha_{16}=0$,
i.e. $R$ is not pointwise fixed by $\sigma_{16}$.
 \end{proof}

\begin{lemma}\label{lemma2}
If $Fix(\sigma_2)$ is the union of a curve of genus 7 and 2 rational curves, then the fixed loci of $\sigma_{16},\sigma_8$ and $\sigma_4$ are as follows:
\small \[\Fix(\sigma_{16})=\{p_1,\dots,p_4,q_1,q_2\}\sqcup R,\ \Fix(\sigma_8)= \{p_1,\dots,p_4,q_1,q_2\}\sqcup R,\]
\[ \Fix(\sigma_4)= \{p_1,\dots,p_4,q_1,q_2\}\sqcup R.
\]
\end{lemma}

\begin{proof} 
Let $C_7$ (resp. $R,R'$) be the smooth curve of genus $7$ (resp. rational curves) in $\Fix(\sigma_2)$. 
We already know that one rational curve is fixed by $\sigma_4$, say $R$. 
Thus $\sigma_4$ fixes $2$ points $q_1,q_2$ on $R'$ and $4$ points $p_1,\ldots,p_4$ on $C_7$.
This implies that the curves $R$ and $R'$ cannot be exchanged by $\sigma_{16}$ nor by $\sigma_8$
and that $\chi_{16}\geq4$ and $\chi_8\geq4$.

By the Riemann-Hurwitz formula for $\sigma_{8}$ on $C_7$,
either the four $p_i$'s are fixed by $\sigma_8$ or none of them is fixed by $\sigma_8$. 
This implies that either $\chi_8=4$ or $\chi_8=8$.
Looking at Table \ref{tab16}, we find that we are left with the three possibilities of Table \ref{tab16-2}.
\begin{table}[h!]
\begin{tabular}{ccccc|cccc}
$d_{16}$&$d_8$&$d_4$&$d_2$&$d_1$&$\chi_{16}$&$\chi_8$&$\chi_4$&$\chi_2$\\
\hline
2& 0& 0& 2& 4& 4& 8& 8& -8 \\
2& 0& 0& 1& 5& 6& 8& 8& -8 \\
2& 0& 0& 0& 6& 8& 8& 8& -8 
     \end{tabular}
        \vspace{0.2cm}
     \caption{}
\label{tab16-2}
\end{table}

In particular $\chi_{8}=8$ and $\{p_1,\dots, p_4,q_1,q_2\}\subset\Fix(\sigma_8)$.
Moreover, by \cite[Proposition 2.2]{TabbaaSarti} we obtain that $2+4\alpha_8=6$, thus $\alpha_8=1$.
This implies that $\sigma_8$ fixes pointwise the curve $R$.

By the Riemann-Hurwitz formula for $\sigma_{16}$ on $C_7$, 
either a) $\sigma_{16}$ fixes the four $p_i$'s and thus $\chi_{16}=8$, 
or b) it does not fix any of them and $\chi_{16}=4$.
By \cite[Proposition 2, Remark 1.3]{AlTabbaaSartiTaki} the cases $(N_{16},\alpha_{16})=(2,1)$ 
and $(N_{16},\alpha_{16})=(8,0)$ are impossible.
Thus in case a) $\alpha_{16}=1$ and $N_{16}=4$, i.e. the fixed locus of 
$\sigma_{16}$ contains $p_1,\dots,p_4,q_1,q_2$ 
and the curve $R$.
On the other hand in case b) we have that $\alpha_{16}=0$, i.e. 
$\sigma_{16}$ fixes exactly $q_1,q_2$ and two points on $R$.
We now show that this case can not appear.
By \cite[Remark 1.3]{AlTabbaaSartiTaki} if $N_{16}=4$, then $n_{3,16}=n_{7,16}=1$ and
$n_{8,16}=2$. Observe that the  points of type $A_{8,16}$ lie on a curve fixed by $\sigma_8$,
thus they must be the two points on $R$. 
This implies that the points of type $A_{3,16}$ and $A_{7,16}$ are $q_1, q_2$.
However, two isolated fixed points of $\sigma_{16}$ lying on an invariant smooth rational curve  
can not be of these types by the proof of \cite[Lemma 4]{ArtebaniSarti4}.
 \end{proof}

For the following examples, see \cite[Example 4.2]{AlTabbaaSartiTaki}.

\begin{example}\label{ex16-1}
Consider the elliptic fibration defined by
\[
y^2=x^3+t^2x+at^3+t^{11},\ a\in \C
\]
with the order $16$ automorphism $\sigma_{16}(x,y,t)=(\zeta_8x,\zeta_{16}^3y,\zeta_8t)$.
The action of $\sigma_{16}^*$ on the holomorphic two form $\omega_X=(dx \wedge dt)/2y$ is the multiplication by $\zeta_{16}$,
thus $\sigma_{16}$ is purely non-symplectic.
The fibration has a fiber of type $I_0^*$ over $t=0$ and a fiber of type $II$ over $t=\infty$. 
The automorphism $\sigma_{16}$ fixes the central component of the fiber of type ${I^*_0}$, four points in the other components 
of the same fiber and two more fixed points in the fiber over $t=\infty$.
This is an example of case a) in Proposition \ref{teo-16}.
\end{example}

\begin{example}\label{ex16-2} Consider the plane sextic $B$ defined by 
\[
F_6(x_0,x_1,x_2)=x_0(x_0^4x_2+a_1x_1^5+a_2x_1x_2^4+a_3x_1^3x_2^2)=0
\]
for general $a_1,a_2,a_3\in \C$. Observe that  $B$ is the union of a smooth plane quintic $C$ and a line $L$. 
Let $Y$ be the double cover of $\P^2$  branched along $B$, which can be 
defined by the equation $
x_3^2=F_6(x_0,x_1,x_2)
$
in $\P(1,1,1,3)$.
The surface $Y$ has the order $16$ automorphism
\[
\sigma_{16}(x_0,x_1,x_2,y)=(x_0,\zeta_8^7x_1,\zeta_8^3x_2,\zeta_{16}^3y).
\]
The surface $Y$ has $5$ singular points of type $A_1$ over the intersection points 
of $C$ and $L$. Its minimal resolution $X$ is a K3 surface and $\sigma_{16}$ lifts to an 
automorphism $\tilde\sigma_{16}$ of $X$. The automorphism $\tilde\sigma_{16}$ has $4$ fixed points: 
two of them over the points $(1,0,0,0)$ and $(0,1,0,0)$ and the other two in the exceptional divisor 
over $(0,0,1,0)$ (which is a singular point of $Y$). Thus this is an  example of case $b)$ in Proposition 
\ref{teo-16}.
\end{example}

\begin{proof}[Proof of Theorem \ref{main}, order 16]

Let $X$ be a K3 surface with a purely non-symplectic automorphism $\sigma$ of order $16$ 
such that $\dim(V^\sigma)=2$.  
By Proposition \ref{teo-16}, $\Fix(\sigma_{16})$ is either the union of a rational curve and $6$ points 
or the union of $4$ isolated points. 

{\em Case a).}   
By Proposition \ref{teo-16} ${\rm NS}(X)=S(\sigma_2)\cong U\oplus D_4$
  for a very general K3 surface.
In what follows we assume $X$ to be very general. 
By \cite[Lemma 2.1]{kondo_trivial} or \cite[\S 3, proof of Corollary 3]{shapiro}, $X$ has an elliptic fibration 
$
\pi:X\rightarrow \P^1
$
with a section $S$ and a reducible fiber of type $\tilde D_4=I_0^*$.
The curve $C_7$ fixed by the involution $\sigma_2$ has to be transverse to the fibers of $\pi$, 
since its genus is bigger than $1$. 
Thus $\sigma_2$ induces the identity on the basis of the fibration.
Since ${\rm NS}(X)= S(\sigma_2)$, $\sigma_2^*$ is the identity on ${\rm NS}(X)$,
hence each smooth rational curve is invariant for $\sigma_2$. 
This implies that the section $S$  and the central component of the fiber of type $I_0^*$ 
are pointwise fixed by $\sigma_2$. 
Since a smooth fiber of $\pi$ must contain four fixed points for $\sigma_2$ and one of them is on $S$, 
then $C_7$ intersects it in $3$ points. 
Applying \cite[Lemma 5]{ArtebaniSarti4} with $x=[C_7]$, one concludes that the elliptic fibration 
$\pi$ is invariant under $\sigma_{16}$.
The section $S$ corresponds to the curve $R'$ (see notation of Proposition \ref{teo-16}) i.e. it is not fixed pointwise by $\sigma_{16}$, 
otherwise each fiber of $\pi$, including the smooth ones, would have an order $16$ automorphism with a fixed point,  
which is impossible for an elliptic curve \cite[]{Hartshorne}. 
Thus $\sigma_{16}$ induces an automorphism of order $8$ on the basis of $\pi$.
This implies that $\sigma_8$ preserves each fiber of $\pi$ and acts on it as an involution with a fixed point. 

Consider a Weierstrass equation for $\pi$ with respect to the section $S$: 
\[
y^2=x^3+A(t)x+B(t),\quad t\in\P^1.
\]
We can assume that the two invariant fibers of $\sigma_{16}$ are over $t=0$ and $t=\infty$,
and that the fiber of type $I_0^*$ is over $t=0$.
Since ${\rm NS}(X)\cong U\oplus D_4$  the fiber $F_{0}$ over $0$ is the only reducible fiber of $\pi$,
moreover $24-e(F_0)$ is divisible by $8$. This implies that the fiber over $t=\infty$ is of type $II$.
By \cite[Table IV.3.1]{Miranda}, this implies that the vanishing order $v(\Delta)$ of $\Delta(t)$ at $t=0$ is $6$ and at $t=\infty$ is $2$. 
Thus $\Delta(t)=t^6P(t)$ with $P(0)\neq0, \deg(P(t))=16$.
Moreover $v(B(\infty))=1$, thus $B(t)=t^3Q(t)$ with $\deg(Q(t))=8$. 
Since the action of $\sigma_{16}$ on the basis of $\pi$ 
has order 8 and the fibers over $t=0,\infty$ are preserved by $\sigma_{16}$, then $Q(t)=t^8+a$ 
with $a\in\C$. 
By \cite[Table IV.3.1]{Miranda}, we have that $A(t)=t^2$. 
Moreover, $\sigma_{16}(x,y,t)=(\zeta_8x,\zeta_{16}^3y,\zeta_8t)$ and $X$ belongs to the 
family in Example \ref{ex16-1}.

{\em Case b).} In this case $\Fix(\sigma_{16})$ is the union of $4$ isolated points,
and $S(\sigma_2)\cong U(2)\oplus D_4$ by Proposition \ref{teo-16}. As before we assume $X$ to be very general, i.e. that 
${\rm NS}(X)=S(\sigma_2)$. It is known that the surface $X$ has a degree two morphism 
 $\pi:X\to \P^2$ which is the minimal resolution of a double cover ramified along the union of 
 a line $\ell$ and a quintic curve $C$ \cite[Section 4]{AlTabbaaSartiTaki}.
In particular, $X$ has six $(-2)$-curves, i.e. $5$ exceptional divisors $E_1,\dots, E_5$ over the points $\ell\cap C$ 
and the proper transform $E$ of (the double cover) of $\ell$.
It follows from Vinberg's algorithm (see \cite{roulleau}) that these are the only $(-2)$-curves of $X$.
This implies that the linear system of the divisor $2E+\sum_{i=1}^5E_i$, which is the one defining the morphism $\pi$,
 is invariant for $\sigma^*$. Thus the automorphism $\sigma$ induces an automorphism $\bar\sigma$ of $\P^2$ preserving the 
 branch curve $\ell\cup C$.

The involution $\sigma_2$ fixes a genus 6 curve and a rational curve $R$.
If the induced automorphism $\overline{\sigma}_2$ were an involution, 
it would fix a line and a point. 
This would imply that in $\Fix(\sigma_2)$ the maximum possible genus is 2, giving a contradiction. 
Thus $\overline{\sigma}_2$ is the identity on $\P^2$, 
$\sigma_2$ is the covering involution and 
$R$ is the transform of the line $\ell$ in the branch locus.
Moreover, since $R$ is contained in $\Fix(\sigma_4)$ and $\Fix(\sigma_8)$, 
$\ell$ is fixed by $\overline{\sigma}_4$ and $\overline{\sigma}_8$.
In addition, $\overline{\sigma}_{16}$ has order 8 on $\P^2$ and it only fixes points, 
$\overline{\sigma}_{8}$ has order 4 and 
$\overline{\sigma}_{4}$ has order 2. 

Assume that $\ell$ is defined by $x_0=0$, thus 
\[
\overline{\sigma}_{8}(x_0,x_1,x_2)=(ix_0,x_1,x_2), \quad 
\overline{\sigma}_{4}(x_0,x_1,x_2)=(-x_0,x_1,x_2).
\]
Since $\overline{\sigma}_{16}$ only fixes points in $\P^2$,
it is $\overline{\sigma}_{16}(x_0,x_1,x_2)=(\zeta_8x_0,x_1,-x_2)$
and the equation of $X$ is obtained taking invariant monomials
and recalling that we need the quintic to be smooth, 
otherwise $\Fix(\sigma_2)$ would not contain a genus $6$ curve.
Thus the equation of $X$ is as in Example \ref{ex16-2}
and 
\[
\sigma_{16}(x_0,x_1,x_2,y)=(\zeta_8x_0, x_1,-x_2,\zeta_{16}y)=(x_0,\zeta_8^7x_1,\zeta_8^3x_2,\zeta_{16}^{11}y),
\]
where $\zeta_{16}$ is a primitive $16$-th root of unity with $\zeta_{16}^2=\zeta_8$.
Observe that $\sigma_{16}^9$ is the automorphism in Example \ref{ex16-2}.
If all the coefficients of $F_6$ are non-zero, then one obtains the equation in Table \ref{1dim} up to rescaling the variables.
\end{proof}

\subsection{Order 20}

\begin{proposition}\label{prop20}
Let $X$ be a K3 surface with a purely non-symplectic automorphism $\sigma_{20}$ of order $20$ 
such that $\dim(V^\sigma)=2$.
Then the fixed loci of $\sigma_{20}$ and of its powers are as follows:
\[
\begin{array}{c|c|c|c|c}
\Fix(\sigma_{20}) & \Fix(\sigma_{10}) & \Fix(\sigma_{5})&\Fix(\sigma_{4})&\Fix(\sigma_{2})\\
\hline
 \{p_1,p_2,p_3\} & \{p_1,\dots,p_7\} & C_2\sqcup\{p_1\} & \{p_1,\dots,p_6\}\sqcup R &  C_6\sqcup R
\end{array}
\]
where $g(C_i)=i$ for $i=2,6$ and $g(R)=0$.
Moreover $d=(2,0,1,0,0,2)$ and ${\rm NS}(X)=S(\sigma_2)$ for a very general such K3 surface $X$.%
\end{proposition}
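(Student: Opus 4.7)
The plan is to follow the template of the order $15$ and $22$ analyses: reduce to finitely many candidate vectors $d$ using the eigenspace decomposition of $H^2(X,\C)$ and the topological Lefschetz formula, then eliminate all but one via the classifications of non-symplectic automorphisms of the prime-power divisors of $20$, and finally compute the Néron-Severi lattice.

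From $d_{20}=2$ and the dimension count for $H^2(X,\C)$ I get $d_1+d_2+2d_4+4d_5+4d_{10}=6$. A direct trace computation gives
\[
\dim V^{\sigma_5}=4+d_5+d_{10},\qquad \dim V^{\sigma_4}=8+d_4,\qquad \dim V^{\sigma_{10}}=4,
\]
and the topological Lefschetz formulas applied to each $\sigma_k$, $k\in\{2,4,5,10,20\}$, express $\chi_k=\chi(\Fix(\sigma_k))$ as explicit linear functions of the $d_i$. Since each $\sigma_k$ is itself purely non-symplectic, the bound $\dim V^{\sigma_5}\leq 5$ (equivalent to $\gamma(5)=4$) gives $d_5+d_{10}\leq 1$, and by the classification in \cite{ArtebaniSartiTaki} the fixed locus of $\sigma_5$ is either $C_2\sqcup\{p\}$ (if $d_5+d_{10}=1$) or $C_1\sqcup\{p_1,\ldots,p_4\}$ (if $d_5=d_{10}=0$).

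In the first case, the constraint $\chi_4\equiv 0\pmod 4$ coming from the relation $\chi_4=4\alpha_4+4$ of \cite[Prop.\ 1]{ArtebaniSarti4}, combined with $d_1\geq 1$ (the invariant lattice contains an ample class) and a case analysis on $\alpha_4$, narrows the candidates to $(d_1,d_2,d_4)=(2,0,0)$ with $(d_5,d_{10})$ equal to $(1,0)$ or $(0,1)$. The subcase $(d_5,d_{10})=(0,1)$ would force $\chi_4=0$, hence $\alpha_4=-1$, so $\sigma_4$ would pointwise fix a curve $C$ of genus at least $2$; since $C\subseteq\Fix(\sigma_2)$ and $\chi_2=-8$ gives $\Fix(\sigma_2)=C_g\sqcup R_1\sqcup\cdots\sqcup R_k$ with $(g,k)\in\{(5,0),(6,1),(7,2)\}$, the curve $C$ must coincide with $C_g$; then the equation $1-g+k'=-1$ for the rational components $k'$ of $\Fix(\sigma_4)$ forces $k'=g-2\geq 3$, contradicting $k'\leq k\leq 2$. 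In the second case ($d_5=d_{10}=0$), an analogous analysis combining the restrictions $\sigma_{10}|_{C_1}$ (identity or involution on an elliptic curve) with the admissible values of $\chi_{10}=12-4d_4$ and the congruence $\chi_4\equiv 0\pmod 4$ eliminates all remaining candidates. This leaves only $d=(2,0,1,0,0,2)$.

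With $d$ pinned down, the fixed loci of $\sigma_2,\sigma_4,\sigma_5,\sigma_{10}$ are recovered by matching $\chi_k$ to the corresponding classifications, and $\Fix(\sigma_{20})$ is determined via the holomorphic Lefschetz formula for $\sigma_{20}$ together with the restrictions on the types $a_{i,20}$ of isolated fixed points on a $\sigma_{20}$-invariant smooth rational curve (as in the order $16$ analysis). Finally, for very general $X$ one has $\rk\,{\rm NS}(X)=22-2\varphi(20)=6=\rk\,S(\sigma_2)$ and $S(\sigma_2)\subseteq{\rm NS}(X)$ is a full primitive sublattice, so ${\rm NS}(X)=S(\sigma_2)$ by Remark \ref{ns}. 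The main obstacle will be the elimination of the case $d_5=d_{10}=0$, which requires a careful interplay between the Lefschetz counts and the geometry of the $\sigma_{10}$- and $\sigma_{20}$-actions on the elliptic curve $C_1$.
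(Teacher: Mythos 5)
Your overall architecture matches the paper's: decompose $H^2(X,\C)$ into eigenspaces, run the topological Lefschetz formulas, and filter through the known classifications for orders $2$, $4$, $5$. Your handling of the branch $d_5+d_{10}=1$ (where $\Fix(\sigma_5)=C_2\sqcup\{p\}$) is sound and in places crisper than the paper's: using $\chi_4=4\alpha_4+4$ to cut the four candidate vectors down to two, and the counting argument $k'=g-2\geq 3>k$ to kill $(d_5,d_{10})=(0,1)$, are both valid and parallel the paper's elimination of the genus-$7$ and genus-$5$ configurations.

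The genuine gap is in the branch $d_5=d_{10}=0$ (equivalently $\chi_5=4$). First, the order-$5$ classification with $\rk S(\sigma_5)=6$ allows \emph{two} fixed loci for $\sigma_5$: an elliptic curve with four points, \emph{or} four isolated points with no curve; you silently drop the second, and your argument via $\sigma_{10}|_{C_1}$ has nothing to say about it. Second, and more seriously, the purely topological filters you propose ($\sigma_{10}$ restricted to $C_1$ is the identity or an involution, $\chi_{10}=12-4d_4$, and $\chi_4\equiv 0\pmod 4$) do not eliminate all candidates. For instance $(d_{20},d_{10},d_5,d_4,d_2,d_1)=(2,0,0,1,1,3)$ satisfies the dimension count, has $d_1=3\geq 1$, $\chi_{10}=8$ (realizable as $\sigma_{10}$ acting on $C_1$ as an involution with four fixed points plus the four isolated points of $\sigma_5$), $\chi_4=4\equiv 0\pmod 4$ with $\alpha_4=0$, $N_4=4$, and $\chi_2=-12$ compatible with $\Fix(\sigma_2)=C_9\sqcup R_1\sqcup R_2$ and the Riemann--Hurwitz count for $\sigma_5$ on $C_9$ ($g'=1$, $N=4$, plus two fixed points on each $R_i$). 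This configuration passes every test you list. The paper disposes of the whole $\chi_5=4$ branch by feeding the \emph{holomorphic} Lefschetz formula the constraints on the local types $a_{i,20}$ coming from which points lie on curves fixed by $\sigma_5$ (namely $a_{4,20}+a_{5,20}+a_{9,20}\in\{0,2,4\}$, $a_{2,20}+a_{7,20}=1$, etc.) and checking, by computer, that no solutions exist. Some version of that finer arithmetic input is unavoidable here; without it your proof does not close.
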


\begin{proof}
Decomposing $H^2(X,\C)$ as the direct sum of the eigenspaces 
of $\sigma_{20}^*$ we obtain:
 \[22=8d_{20}+4d_{10}+4d_5+2d_4+d_2+d_1.\]
Since $d_{20}=2$, then $d_{10}$ is either $0$ or $1$ and this gives $16$ possibilities for the vector $d$.
Let $\chi_i=\chi(\Fix(\sigma_{i})), i\in\{2,4,5,10,20\}$.
By the topological Lefschetz fixed point 
formula we get
\begin{equation}
\begin{cases}
\chi_{20}&=d_{10}-d_5-d_2+d_1+2\\
\chi_{10}&= 2d_{20}-d_{10}-d_5-2d_4+d_2+d_1+2\\
\chi_{5}&=-2d_{20}-d_{10}-d_5+2d_4+d_2+d_1+2\\
\chi_{4}&= -4d_{10}+4d_5-d_2+d_1+2\\
\chi_{2}&= -8d_{20}+4d_{10}+4d_5-2d_4+d_2+d_1+2\\
\end{cases}\label{sist20}
\end{equation}

By \eqref{sist20} we compute $\chi_5$ for the $16$ possible $d$'s 
and find that it is either $-1$ or $4$.
  Lemma \ref{lemma3} and Lemma \ref{lemma4} study these two cases separately.
  Observe that, since $d_{20}=2$ and $\varphi(20)=8$, 
the N\'eron-Severi lattice of a very general $X$ has rank $22-2\cdot 8=6$.
Moreover $S(\sigma_2)\subseteq {\rm NS}(X)$ by Remark \ref{ns}.
On the other hand, since the fixed locus of $\sigma_2$ is the union of a curve of genus $6$ 
and a rational curve, then ${\rm rk}\,S(\sigma_2)=6$ by \cite{Nikulin}, thus $S(\sigma_2)={\rm NS}(X)$.
 \end{proof}

 \begin{lemma}\label{lemma3}
 If $\chi_5=-1$, then the fixed loci of $\sigma_{20}, \sigma_{10}, \sigma_5, \sigma_4, \sigma_2$ are
 \[ \Fix(\sigma_{20})=\{p_1,p_2,p_3\},\ \Fix(\sigma_{10})=\{p_1,\dots,p_6, p\}\]
 \[ \Fix(\sigma_5)= C_2\sqcup\{p\}, \ \Fix(\sigma_4)=\{p_1,\dots,p_6\}\sqcup R, \Fix(\sigma_2)= C_6\sqcup R
 \]
 \end{lemma}
 
 \begin{proof} 
 By \cite{ArtebaniSartiTaki}, if $\chi_5=-1$, then $\Fix(\sigma_5)$ is the union of a smooth curve $C$
 of genus two and one point  $p$. 
 This corresponds to the cases in Table \ref{tab:20}.
 {\small
\begin{table}[h!]
\begin{tabular}{cccccc|ccccccc}
 $d_{20}$&$d_{10}$&$d_{5}$&$d_{4}$&$d_{2}$&$d_1$&$\chi_{20}$&$\chi_{10}$&$\chi_5$&$\chi_4$&$\chi_2$\\
 \hline
 \rowcolor{Lgray}  2 & 0 & 1 & 0 & 0 & 2 & 3 & 7 & -1 & 8 & -8\\
 2 & 0 & 1 & 0 & 1 & 1 & 1 & 7 & -1 & 6 & -8\\
 2 & 1 & 0 & 0 & 0 & 2 & 5 & 7 & -1 & 0 & -8\\
 2 & 1 & 0 & 0 & 1 & 1 & 3 & 7 & -1 & -2& -8\\
\end{tabular}
\vspace{0.2cm}

\caption{}
\label{tab:20}
\end{table}}
In all these cases $\chi_{10}=7$,
 so that $\Fix(\sigma_{10})$ is the union of $p$ and $6$ points on $C$.
 By the Riemann-Hurwitz formula, this implies that $\sigma_{20}$ has two fixed points on $C$,
 so that $\Fix(\sigma_{20})$ consist of the union of three points.
 Moreover, in all these cases $\chi_2=-8$ by Table \ref{tab:20},
 so that by \cite{Nikulin} $\Fix(\sigma_2)$ is a) the union of a curve of genus $7$ and two rational curves,
 or b) the union of a curve of genus six and a rational curve, or c) a genus $5$ curve.
 In all cases $\sigma_{20}$ acts with order $10$ on the curve of positive genus,
since otherwise either $\sigma_{10}$ or $\sigma_4$ should contain such curve in its fixed locus,
contradicting the previous remarks for $\sigma_{10}$ and \cite[Theorem 0.1]{ArtebaniSarti4}.

 In case a), $\sigma_{20}$ must fix exactly three points on the curve $C$ of genus $7$ 
 and exchange the two rational curves.  By the Riemann-Hurwitz formula this implies that $\sigma_5$ 
 fixes the same points on $C$ and  $\sigma_4$ has exactly $8$ fixed points on $C$ and exchanges 
 the two rational curves.
 This is not possible since by \cite[Theorem 0.1]{ArtebaniSarti4} the number of fixed points equals $2\alpha+4$,
 where $\alpha=\sum_{C\subset \Fix(\sigma_4)}(1-g(C))$.

 In case b), $\sigma_{20}$  has exactly one fixed point on the genus six curve 
and two points on the rational curve $R$, while $\sigma_5$ has exactly five fixed points on the curve by 
the Riemann-Hurwitz formula. By the same formula $\sigma_4$ has $6$ fixed points on $C$.
 By \cite[Theorem 0.1]{ArtebaniSarti4} $R$ is fixed by $\sigma_4$. This case corresponds to the statement.
 
Case c) is impossible since, by the Riemann-Hurwitz formula, a curve of genus $5$ can not 
 have an order five automorphism with more than two fixed points (and $\sigma_5$ would 
 have this property).
 \end{proof}
 
 \begin{lemma}
\label{lemma4}
If $\chi_5=4$, there are no admissible cases.
 \end{lemma}

 \begin{proof} 
By \cite{ArtebaniSartiTaki}, if $\chi_5=4$, then 
$\Fix(\sigma_5)$ contains either four isolated points 
or an elliptic curve and four isolated points. 
In both cases $a_{1,5}=3, a_{2,5}=1$.
Observe that 
points of type $A_{4,20}, A_{5,20}, A_{9,20}$ lie on a curved fixed by $\sigma_5$ while
points of type $A_{i,20}$ with $i\in\{1,2,3,6,7,8\}$ are isolated fixed points for $\sigma_5$.
Since the action of $\sigma_{20}$ on $\Fix(\sigma_5)$ has order $2$ or $4$,  
in both cases the point of type $A_{2,5}$ is fixed by $\sigma_{20}$ and $a_{2,20}+a_{7,20}=1$ and $a_{1,20}+a_{3,20}+a_{6,20}+a_{8,20}$ is either $1$ or $3$.

If $\Fix(\sigma_5)$ consists of four isolated points, 
$a_{4,20}+a_{5,20}+a_{9,20}=0$ since there are no curves in $\Fix(\sigma_5)$.
A Magma computation shows that the
holomorphic Lefschetz formula has no solutions satisfying these conditions.

If $\Fix(\sigma_5)$ consists of four isolated points and an elliptic curve $E$,
by the Riemann-Hurwitz formula $E$ contains $0$, $2$ or $4$ isolated points for $\sigma_{20}$ 
thus
$a_{4,20}+a_{5,20}+a_{9,20}\in\{0,2,4\}$. 
 The holomorphic Lefschetz formula 
 has no solutions with these restrictions.
\end{proof}

\begin{example}\label{ex20} 
 Let $B$ be the plane sextic defined by 
 \[
 F_6(x_0,x_1,x_2)=x_0(x_1^5+a_1x_2^5+a_2x_0^2x_2^3+a_3x_0^4x_2)=0,
 \]
 where $a_1,a_2,a_3\in\C$ are general. 
Observe that $B$ is the union of a smooth plane quintic $C$ and a line $L$.
Let $Y$ be the double cover of $\P^2$
 branched along $B$, which  can be defined by the equation 
 $x_3^2-F_6(x_0,x_1,x_2)=0$ in $\P(1,1,1,3)$.
The surface $Y$ has the order $20$ automorphism
\[
\sigma_{20}(x_0,x_1,x_2,y)=(-x_0,\zeta_{5}x_1,x_2,iy).
\]
The surface $Y$ has $5$ singular points of type $A_1$ over the intersection points 
of $C$ and $L$. Its minimal resolution $X$ is a K3 surface and $\sigma_{20}$ lifts to an 
automorphism $\tilde\sigma_{20}$ of $X$. The automorphism $\tilde\sigma_{20}$ has $3$ fixed points 
over $(1,0,0), (0,1,0), (0,0,1)$. 
\end{example}

\begin{proof}[Proof of Theorem \ref{main}, order 20]
Let $X$ be a K3 surface with a purely non-symplectic automorphism $\sigma_{20}$ of order $20$.
By Proposition \ref{prop20} 
$\Fix(\sigma_{5})$ contains a curve $C_2$ of genus $2$ and one point.
The linear system $|C_2|$ defines a morphism $\varphi:X\to \P^2$ 
of degree two which contracts all smooth rational curves 
orthogonal to $C_2$. 
Since $\sigma$ leaves $C_2$ invariant, then it induces an 
automorphism $\bar\sigma$ of $\P^2$ which can be assumed 
to be  diagonal.

Let $\bar\sigma_2=\bar\sigma^{10}$ and assume it has order two.
Thus its fixed locus is the union of a line and one point,
so that $\Fix(\sigma_2)$ contains a fixed curve of genus at most $2$,
contradicting the fact that $\sigma_2$ fixes a curve of genus $6$.
Thus $\sigma_2$ coincides with the covering involution of $\varphi$.

Now consider the automorphism $\bar\sigma_4=\bar\sigma^5$,
whose order is equal to two. Its fixed locus contains a line, we can assume it to be 
$L=\{x_0=0\}$ up to projectivities. By Proposition \ref{prop20} the line $L$ 
must be a component of the branch curve $B$ of $\varphi$.

Finally, let $\bar\sigma_5=\bar\sigma^4$. Since $\sigma_5$ 
has a fixed curve, then $\bar\sigma_5$ must fix a line $L'$ which 
is not equal to the line fixed by $\bar\sigma_4$,
thus up to projectivities we can assume $L'=\{x_1=0\}$.
In these coordinates 
\[
\bar\sigma(x_0,x_1,x_2)=(-x_0,\zeta_5x_1,x_2).
\]
The branch curve $B$ is reduced, invariant for $\bar\sigma$ and 
must contain the line $L$ as a component. This implies that 
its equation is  as in Example \ref{ex20}. 
If all the coefficients of $F_6$ are non-zero, then 
one obtains the equation in Table \ref{1dim} up to rescaling the variables.
 \end{proof}

\begin{remark} It follows from the proof of Theorem \ref{main}, $n=20$,
that there are five smooth rational  curves $R_1,\dots,R_5$ in $X$, each intersecting at one point the 
two fixed curves $C_6$ and $R$ of $\sigma_2$. 
The classes of the curves $R, R_1,\dots, R_5$ all belong to the invariant lattice $S(\sigma_2)$.
Observe that the classes of 
\[2R+R_1+R_2+R_3+R_4,\ 2R+R_1+R_2+R_3+R_5,\ 
R,\ R_1,\ R_2,\ R_3,
\]
 generate a lattice $S$ isometric to $U(2)\oplus D_4$.
Since $S$ is contained in $S(\sigma_2)$ and $\det(S(\sigma_2))=\det(S)=-2^4$ by \cite[Theorem 0.1]{ArtebaniSartiTaki}, then $S=S(\sigma_2)$.
\end{remark}

\subsection{Order 24}

\begin{proposition}\label{prop24}
Let $X$ be a K3 surface with a purely non-symplectic automorphism $\sigma=\sigma_{24}$ of order $24$ 
such that $\dim(V^\sigma)=2$. Then the fixed loci of $\sigma_{24}$ and some of its powers are 
as follows:
{\small
\[
\begin{array}{c|c|c}
\Fix(\sigma_{24}) & \Fix(\sigma_{12}) & \Fix(\sigma_{6})\\
\hline
 \{p_1, p_2, p_3,p_{12},p_{13}\} & \{p_1, p_2, p_3,p_{12},p_{13}\} & R_1\sqcup\{p_1,\dots,p_{11}\}
\end{array}
\]
\[
\begin{array}{c|c}
\Fix(\sigma_{3})&\Fix(\sigma_{2})\\
\hline
C_4\sqcup R_1\sqcup\{p_1\} &  C_7\sqcup R_1\sqcup R_2
\end{array}
\]
}where $g(C_i)=i$ for $i=4,7$ and $g(R_1)=g(R_2)=0$. 
Moreover $\chi(\Fix(\sigma_4))=\chi(\Fix(\sigma_8))=8$,
$d=(2,0,0,0,0,1,0,4)$ and ${\rm NS}(X)=S(\sigma_2)\cong U\oplus D_4$ for a very general such K3 surface $X$.
\end{proposition}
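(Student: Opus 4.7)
The strategy follows the template established for the previous orders. First I would decompose $H^{2}(X,\C)$ into $\sigma^{*}$-eigenspaces: since $\varphi(24)=8$,
\[
22=8d_{24}+4d_{12}+4d_{8}+2d_{6}+2d_{4}+2d_{3}+d_{2}+d_{1},
\]
and the hypothesis $d_{24}=2$ leaves only a short finite list of tuples for $(d_{12},d_{8},d_{6},d_{4},d_{3},d_{2},d_{1})$. Writing $\chi_{i}=\chi(\Fix(\sigma_{i}))$ for each nontrivial proper divisor $i$ of $24$, each $\chi_{i}$ becomes a linear function of these numbers via the topological Lefschetz formula, exactly as in the proofs of Propositions \ref{prop22}, \ref{prop15}, \ref{teo-16} and \ref{prop20}.

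Next, I would use the classifications of purely non-symplectic automorphisms of the intermediate orders to prune the list. The involution $\sigma_{2}=\sigma^{12}$ is purely non-symplectic, so by \cite{Nikulin} its fixed locus is a disjoint union $C_{g}\sqcup R_{1}\sqcup\cdots\sqcup R_{k}$ with $(g,k)$ determined by $\chi_{2}$; the possibilities for $\sigma_{3}=\sigma^{8}$ are controlled by \cite{ArtebaniSarti}, those for $\sigma_{4}=\sigma^{6}$ and $\sigma_{6}=\sigma^{4}$ by \cite{ArtebaniSarti4} and Proposition \ref{gs}, and those for $\sigma_{8}=\sigma^{3}$ by \cite{TabbaaSarti}. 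The nested inclusions $\Fix(\sigma_{24})\subseteq\Fix(\sigma_{12})\subseteq\Fix(\sigma_{k})$ for every $k\mid 12$ link these lists, and Riemann--Hurwitz applied to the action of $\sigma_{24}$ on the invariant positive-genus curves (in particular on the genus-$7$ curve fixed by $\sigma_{2}$ and the genus-$4$ curve fixed by $\sigma_{3}$) yields further cuts. After this combinatorial sieve I expect only the configuration in the statement to survive; the common rational curve $R_{1}$ then arises as the unique $\sigma_{24}^{*}$-invariant $(-2)$-class that is pointwise fixed by both $\sigma_{3}$ and $\sigma_{6}$.

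The main obstacle is the final tightening, namely showing that $\Fix(\sigma_{12})=\Fix(\sigma_{24})$ consists of exactly the five prescribed points. This should follow by combining the holomorphic Lefschetz formula for $\sigma_{24}$, which gives a Diophantine relation among the integers $a_{i,24}$, with the geometric constraint that a fixed point of type $A_{i,24}$ is isolated for $\sigma_{12}$ precisely when neither local eigenvalue of $\sigma^{2}$ equals $1$, and otherwise lies on one of the curves fixed by $\sigma_{2}$, $\sigma_{3}$ or $\sigma_{6}$. A case analysis modelled on the one used for $\sigma_{15}$ in Proposition \ref{prop15} will then complete the count. Finally, for a very general $X$ one has ${\rm rk}\,{\rm NS}(X)=22-2\varphi(24)=6$, and since $S(\sigma_{2})\subseteq{\rm NS}(X)$ by Remark \ref{ns} and $\rk\,S(\sigma_{2})=6$ by Nikulin's formulas applied to $(g,k)=(7,2)$, one concludes ${\rm NS}(X)=S(\sigma_{2})\cong U\oplus D_{4}$, as claimed.
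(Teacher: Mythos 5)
Your plan has the same skeleton as the paper's proof: the eigenspace decomposition $22=8d_{24}+4d_{12}+4d_8+2d_6+2d_4+2d_3+d_2+d_1$, the topological Lefschetz formulas expressing each $\chi_i$ linearly in the $d_k$, the use of the classifications for the intermediate orders together with the nested inclusions of fixed loci and Riemann--Hurwitz on the positive-genus fixed curves, and the concluding lattice argument (which you get exactly right: ${\rm rk}\,{\rm NS}(X)=6$ for very general $X$, $S(\sigma_2)\subseteq{\rm NS}(X)$, and $S(\sigma_2)\cong U\oplus D_4$ from the invariants $(g,k)=(7,2)$). So the approach is the correct one.

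However, the plan defers precisely the steps where specific ideas are needed, and in one place names the wrong tool. The sieve is driven by the value $\chi_3\in\{0,-3,-6\}$, and the branches $\chi_3=0$ and $\chi_3=-6$ are eliminated neither by Riemann--Hurwitz nor by the nested inclusions alone: the paper kills them using Dillies' classification of order-$6$ non-symplectic automorphisms (\cite[Theorem 4.1]{Dillies}), which constrains the local types of the isolated $\sigma_6$-fixed points lying on a curve fixed by $\sigma_3$. Proposition \ref{gs}, which you cite for $\sigma_6$, is an existence statement about square roots of automorphisms and provides no such constraint, so your toolkit as listed cannot close these cases. Second, even after fixing $\chi_3=-3$, $\chi_6=13$ and $\chi_{12}=\chi_{24}=5$, two candidate vectors $d=(2,0,0,0,0,1,0,4)$ and $d=(2,0,0,1,0,0,1,3)$ survive; the second is excluded only by observing that $\Fix(\sigma_8)$ consists of isolated points plus at most one rational curve, whence $\chi_8\ge\chi_{24}=5$, ruling out $\chi_8=2$. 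Nothing in your plan would detect this. Finally, the holomorphic Lefschetz formula enters not to pin down $\Fix(\sigma_{24})$ directly --- the equality $\Fix(\sigma_{12})=\Fix(\sigma_{24})$ already follows from $\chi_{12}=\chi_{24}=5$ and the inclusion of fixed loci --- but to exclude the possibility that $\sigma_{12}$ fixes the rational curve $R_1$ pointwise. These are substantive missing ingredients rather than routine bookkeeping, so the proposal as written is not yet a proof.
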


\begin{proof}
Decomposing $H^2(X,\C)$ as the direct sum of the eigenspaces 
of $\sigma_{24}^*$ we obtain:
\[22=8d_{24}+4d_{12}+4d_8+2d_6+2d_4+2d_3+d_2+d_1.\]
Let $\chi_i=\chi(\Fix(\sigma_{i})), i\in\{2,3,4,6,8,12,24\}$.
 By the topological Lefschetz fixed point 
formula we get

\begin{equation}
\begin{cases}
\chi_{24}&=d_6-d_3-d_2+d_1+2\\
\chi_{12}&=2d_{12}-d_6-2d_4-d_3+d_2+d_1+2\\
\chi_{8}&=-2d_6+2d_3-d_2+d_1 +2\\
\chi_{6}&=4d_{24}-2d_{12}-4d_8-d_6+2d_4-d_3+d_2+d_1 +2\\
\chi_{4}&= -4d_{12}+2d_6-2d_4+2d_3+d_2+d_1 +2\\
\chi_{3}&=-4d_{24}-2d_{12}+4d_8-d_6+2d_4-d_3+d_2+d_1+2\\
\chi_{2}&= -8d_{24}+4d_{12}-4d_8+2d_6+2d_4+2d_3+d_2+d_1 +2\\
\end{cases}
\end{equation}

Computing all possible values of the vector $d$ one can see 
that $\chi_3\in \{0,-3,-6\}$. 

Assume $\chi_3=0$. By \cite{ArtebaniSarti} $\Fix(\sigma_3)$ 
is either the union of  genus two curve and two isolated points or the union of a genus three curve,
a smooth rational curve and two isolated points.
Clearly $\Fix(\sigma_6)\subseteq \Fix(\sigma_3)$ and in this case $\chi_6=16$ or $8$. 
The first case is incompatible with the structure of the fixed locus of $\sigma_3$.
If $\chi_6=8$, then the fixed locus of $\sigma_3$ must be  the union of a genus three curve $C$,
a smooth rational curve $R$ and two isolated points $p,q$.
The automorphism $\sigma_6$ fixes $4$ points on $C$ and $p,q$. Moreover, it either fixes pointwise $R$ 
or it has two isolated fixed points on it.
Both cases are incompatible with \cite[Theorem 4.1]{Dillies} since the fixed points of $\sigma_6$ 
contained in the fixed curve of $\sigma_3$ are those of type $A_{2,6}$ (of type $\frac{1}{6}(3,4)$ in \cite{Dillies}).

If $\chi_3=-6$ we have $\chi_6=10$ and this 
can be seen to be incompatible with \cite[Theorem 4.1]{Dillies}  
with an argument similar to the previous one.

If $\chi_3=-3$ and by \cite{ArtebaniSarti} $\Fix(\sigma_3)$ 
is either the union of a curve of genus $3$ and one point,
or the union of a curve of genus $4$, a smooth rational curve and one point.
In these cases we have $\chi_6=13$, 
which excludes the first possibility for $\Fix(\sigma_3)$.
Thus $\Fix(\sigma_3)$ is the union of a curve $C$ of genus $4$, 
a smooth rational curve $R$ and one point $p$.
Using the Riemann-Hurwitz formula for $\sigma_6$ and the fact that 
$\chi_6=13$ we obtain that $\sigma_6$ fixes $p$ and
$10$ points on $C$. Moreover, by  \cite[Theorem 4.1]{Dillies} the curve $R$ 
is pointwise fixed by $\sigma_6$.
 In this case one computes that $\chi_{12}$ is either $5$ or $1$,
 but the second case is not possible since $\sigma_{12}$ either fixes pointwise 
 or has two fixed points on $R$.
 Thus $\Fix(\sigma_{12})$ fixed $p$, two points on $C$ 
 and it either fixes pointwise or has two fixed points on $R$.
 A computation using the holomorphic Lefschetz formula shows that 
 the first case does not occur.
 In this case one computes that $\chi_{24}\in \{-1,1,3,5,7\}$.
 The only cases compatible with the structure of $\Fix(\sigma_{12})$ are $\chi_{24}=3$ or $5$.
 The first case is impossible by the Riemann-Hurwitz formula.
 
 Assuming $\chi_3=-3$, $\chi_6=13$, $\chi_{12}=5$ 
 and $\chi_{24}=5$ we find two possible vectors $d=(2,0,0,0,0,1,0,4), (2,0,0,1,0,0,1,3)$.
 For these cases $\chi_2=-8$, $\chi_4=8$. 
 Moreover $\chi_8=8$ in the first case and $2$ in the second case.
 
 By \cite{Nikulin} the fixed locus of $\sigma_2$ is either the union of a curve $C_7$ of genus $7$ 
 and two smooth rational curves ($R_1$ and $R_2$), or the union of a curve $C_6$ of genus $6$ and $R_1$. 
 The latter is not possible by the Riemann-Hurwitz formula applied to $\sigma_6$ restricted to $C_6$.
 Since $\chi_4=8$, $\sigma_4$ must fix $4$ points on $C_7$, 
 two points on  $R_1$ and it either fixes pointwisely $R_2$ or it has two fixed points on it.
 This implies that $\Fix(\sigma_8)$ contains isolated points and, at most, a smooth rational curve.
 Thus $\chi_8\geq \chi_{24}=5$, which excludes the case $d=(2,0,0,1,0,0,1,3)$.
 
 Finally, by \cite[Theorem 4.2.2]{Nikulin-inv} or \cite[Figure 1]{ArtebaniSartiTaki} 
 the invariant lattice of $\sigma_2$ is isometric to $U\oplus D_4$.
 For a very general K3 surface we have ${\rm rk}\, {\rm NS}(X)=22-2\varphi(24)=6$.
 Moreover $S(\sigma_2)\subseteq {\rm NS}(X)$ by Remark \ref{ns}, 
 thus  $S(\sigma_2)= {\rm NS}(X)$.
 \end{proof}
 
 \begin{example}\label{ex24} 
  Consider the elliptic surface with equation
  \[
  y^2=x^3+t(t^4-1)(t^4-a),\ a\in \C.
  \]
  For general $a\in \C$ it is a K3 surface and carries the order $24$ automorphism
  \[
  \sigma_{24}(x,y,t)=(\zeta_{12}x,\zeta_8y,it).
  \]
  The action of $\sigma^*$ on the holomorphic two form $\omega_X=(dx \wedge dt)/2y$ is the multiplication by $\zeta_{12}\zeta_4\zeta_8^{-1}$,
thus $\sigma_{24}$ is purely non-symplectic.
For general $a\in \C$ the elliptic fibration has a singular fiber $F_{\infty}$ of type $I_0^*$ over $t=\infty$ and $9$ fibers of type $II$.
The automorphism $\sigma_2$ fixes the section at infinity $R_1$, the genus $7$ curve defined by $y=0$ and the central 
component $R_2$ of the fiber $F_{\infty}$.
The automorphism $\sigma_3$ fixes $R_1$, the curve of genus  $4$ defined by $x=0$ and the intersection point $p_1$ 
between $R_2$ and the component of $F_{\infty}$ intersecting $R_1$. Observe that the remaining three components 
of $F_{\infty}$ are permuted by $\sigma_3$.
The automorphism $\sigma_{6}$ fixes the $9$ singular points $p_3,\dots,p_{11}$ of the fibers of type $II$,
the point $p_1$ and the intersection point $p_2$ between the fiber $F_{\infty}$ and the curve $x=0$.
Finally, the automorphisms $\sigma_{12}$ and $\sigma_{24}$ fix the singular point $p_3$ of the fiber $F_0$ of type $II$ over $t=0$,
the intersection points of $R_1$ with the fibers $F_0, F_{\infty}$, $p_1$ and $p_2$.  
\end{example}

 \begin{proof}[Proof of Theorem \ref{main}, order 24]
 By Proposition \ref{prop24} the fixed locus of $\sigma_2$ is the union of a curve $C_7$ of genus $7$ and two rational curves $R_1, R_2$.
Moreover ${\rm NS}(X)=S(\sigma_2)\cong U\oplus D_4$ for a very general  $X$. 
Following the first part of the proof of Theorem \ref{main} for order $16$,
 we find that a very general $X$ has a jacobian elliptic fibration $\pi:X\to \P^1$ with a fiber of type $I_0^*$ 
 such that $R_1$ can be assumed to be a section of $\pi$, 
 $R_2$ is the central component of the reducible fiber, and $C_7$ 
 intersects a general fiber in three points.
 It follows from \cite[Lemma 5]{ArtebaniSarti4} with $x=[C_7]$ that $\pi$ is invariant for $\sigma_{24}$.
 Since the fixed locus of $\sigma_3$ contains a curve $C_4$ of genus $>1$, then 
 each fiber of $\pi$ is invariant for $\sigma_3$. Moreover $\sigma_3$ fixes pointwise the curve $R_1$.
 Thus up to a coordinate change $\pi$ has Weierstrass equation  of the form
\[
y^2=x^3+p(t),
\]
where $\deg(p)\leq 12$, $\sigma_2(x,y,t)=(x,-y,t)$ and $\sigma_3(x,y,t)=(\zeta_3x,y,t)$.
 Observe that $\sigma_8$ preserves $R_1$ but $\sigma_4=\sigma_8^2$ does not fix 
 it pointwisely,  since otherwise $R_1$ would be contained in the fixed locus of $\sigma_{12}$,
 contradicting Proposition \ref{prop24}.
Thus $\sigma_8$ induces an automorphism $\bar\sigma_8$ of order $4$ on $\P^1$.
 Up to a coordinate change, we can assume that $\bar\sigma_8(t)=it$. 
Since the reducible fiber of type $I_0^*$ must be preserved by $\sigma_8$,
 then we we can assume it to be over $t=\infty$. 
 By \cite{Miranda} this implies that the $\deg(p)=9$ (so that it has a triple root at infinity).
 Moreover, since its zero set is invariant for $\bar\sigma_8$, then $p(t)=t(t^4-a)(t^4-b)$ for some 
 $a,b\in \C$. Finally, since $\sigma_8^4=\sigma_2$ we can assume that
 $ \sigma_8(x,y,t)=(-ix,\zeta_8y,it).$ This implies that up to a coordinate change 
 $X$ belongs to the family in Example \ref{ex24}.
 \end{proof}

\section{Classification for order 22}\label{class22}
We now provide a classification theorem of 
purely non-symplectic automorphisms $\sigma$ of order $22$ on a K3 surface
according to their fixed locus.  
Observe that, since $\varphi(22)=10$, then $\dim(V^\sigma)\in \{1,2\}$. 
The case when $\dim(V^\sigma)=2$ has  been studied in Section \ref{s2}.

We recall that the fixed locus of any power $\sigma_i:=\sigma^\frac{22}{i}$  of $\sigma$ is of the form
\[
 C_i\sqcup R_1\sqcup \ldots\sqcup R_{k_i}\sqcup \{p_1,\dots, p_{N_i}\},
\]
where $g(C_i)=g_i$, $g(R_\ell)=0$ for $\ell=1,\dots,k_i$.

\begin{remark}
As in \cite[Lemma 1.3]{ACV}, a straightforward computation using the holomorphic Lefschetz formula shows 
that a non-symplectic automorphism of order $22$ is purely non-symplectic.
\end{remark}
\begin{theorem}\label{thm22}
Let $\sigma$ be a purely non-symplectic automorphism of order $22$ of a complex K3 surface $X$.
Then the invariants $(g_i,k_i,N_i)$ of the fixed locus of $\sigma_i:=\sigma^\frac{22}{i}$, the vector 
$d=(d_{22,}d_{11}, d_2,d_1)$ giving the dimensions of the eigenspaces of $\sigma^*$ in $H^2(X,\mathbb C)$ 
and the N\'eron-Severi lattice of a very general K3 surface carrying an automorphism with such invariants 
are given by one of the rows of Table \ref{tab22}. Moreover, all cases in the table exist.

\begin{table}[h!!!]
\centering
\begin{tabular}{c|ccc|ccc|cc|c|c}
&$N_{22}$ & $g_{22}$& $k_{22} $  & $N_{11}$& $g_{11}$ & $k_{11}$  & $g_2$ & $k_2$ & $d$ & ${\rm NS}$ \\
\hline
A1&6&-& 0& 2 &  1& 0&  10 & 1& (2,0,0,2) & $U$ \\
B1&11&0& 0& 11& 0 & 0 & 5 & 5 & (1,0,1,11) & $U\oplus A_{10}$ \\
B2&9&0& 0& 11& 0 & 0 & 5 & 4 & (1,0,2,10)& $U\oplus A_{10}$\\
B3&5&-& 0& 11& 0 & 0& 5 & 1 & (1,0,5,7) & $U\oplus A_{10}$\\
\end{tabular}
 \vspace{0.2cm}
\caption{Order 22}
\label{tab22}
\end{table}
\end{theorem}

\begin{proof}
Let $\sigma_{11}$ be the square of $\sigma_{22}$.
According to \cite[Table 4]{ArtebaniSartiTaki}, the fixed locus of $\sigma_{11}$ is 
either a) the union of a smooth elliptic curve and 2 points, 
or b) the union of a rational curve and 11 points.
In the first case $m:=\frac{1}{10}(22-\rk S(\sigma_{11}))=2$, while in the second case $m=1$.

Recall that fixed points of type $A_{10,22}$ lie on a curve in $\Fix(\sigma_{11})$, 
while points of type $A_{i,22}, A_{10-i,22}$ 
correspond to isolated points for $\sigma_{11}$ of type $A_{i,11}$, $i=1,\ldots,4$.
The Lefschetz holomorphic formula with the restrictions 
\[
a_{5,22}\leq a_{5,11},\quad a_{i,22}+a_{10-i,22}\leq a_{i,11},\quad i=1,2,3,4
\]
gives the solutions as in Table \ref{tabB}, 
where we compute $\chi_{22}$ and $\chi_{2}$ by \eqref{sistema22}.
{\begin{table}[h]
\begin{tabular}{c|cc|cc}
&$(a_{1,22}, a_{2,22},\ldots, a_{10,22})$&$\alpha$&$\chi_{22}$&$\chi_2$\\
\hline
A1&(0, 0, 0, 1, 0, 0, 0, 0, 1, 4)& 0& 6& -16  \\
B1&(3, 2, 1, 1, 1, 2, 1, 0, 0, 0)&1&13&2\\
B2&(3, 2, 2, 1, 1, 0, 0, 0, 0, 0)&1&11&0\\
B3&(0, 0, 0, 1, 1, 0, 0, 0, 1, 2)&0&5&-6\\
\end{tabular}
   \vspace{0.2cm}
\caption{}
\label{tabB}
\end{table}}

In case A1 we have $d_{1}+d_{2}=2$ and $d_{11}+d_{22}=2$ by \cite[Table 4]{ArtebaniSartiTaki}.
Since $\chi_{22}=6$, then $(d_{22},d_{11},d_2,d_1)=(2,0,0,2)$ by \eqref{sistema22}.
The description of the fixed locus of $\sigma_2$ is thus obtained as in the proof of Proposition \ref{prop22}.

We now study the possibilities for $\Fix(\sigma_{22})$ when
$\Fix(\sigma_{11})$ is the union of a rational curve and $11$ points. 
By \cite{Nikulin} the fixed locus of the involution $\sigma_2$
is the union of a curve of genus $g_2$ and $k_2$ rational curves and 
$\chi_2=2(1-g_2+k_2)$. 
Thus in case B1 one has $(g_2,k_2)\in\{(0,0), (1,1),(2,2),(3,3),(4,4), (5,5)\}$.
The only admissible one is $(g_2,k_2)=(5,5)$, since otherwise, recalling that isolated points of $\sigma_{22}$ lie on fixed curves for $\sigma_2$,
one gets a contradiction with the Riemann-Hurwitz formula.

As for case B2, one has $(g_2,k_2)\in\{(1,0), (2,1),(3,2),(4,3),(5,4), (6,5)\}$.
The first four cases give a contradiction to the Riemann Hurwitz formula. 
Case $(g_2,k_2)=(6,5)$ is not admissible since by \cite[Proposition V.2.14]{FK} a curve of genus 6 does not admit an automorphism of order 11 acting on it.

Similarly, in case B3 the possibilities are $(g_2,k_2)\in\{(4,0), (5,1),(6,2)\}$ and the only admissible one is $(g_2,k_2)=(5,1)$.
The vector $d=(d_{22},d_{11},d_2,d_1)$ is obtained in all cases by means of \eqref{sistema22}.

The N\'eron-Severi lattice of a very general K3 surface in case A1 has been given in Section \ref{sec:22}.
In the remaining cases, which have $d_{22}=1$, the rank of the N\'eron-Severi lattice in the very general case is $22-\varphi(22)=12$.
Since the lattice $S(\sigma_{11})$ is a primitive sublattice of ${\rm NS}(X)$ by Remark \ref{ns} and has rank $d_1+d_2=12$ in each case, then 
${\rm NS}(X)=S(\sigma_{11})$. By \cite{OZ2} or \cite[Sec. 7]{ArtebaniSartiTaki} the lattice $S(\sigma_{11})$ is isometric to $U\oplus A_{10}$.

An example for case A1 has been given in Section \ref{sec:22}.
We now provide examples for  the cases B1, B2, B3.
\end{proof}

 \begin{example}\label{B1} (Case B1)
Let $X$ be the elliptic K3 surface whose elliptic fibration is given by
\[y^2=x^3+t^7x+t^5.
\]
The singular fibers of the fibration are $II^*$ over $t=0$, $III$ over $t=\infty$ and 11 fibers of type $I_1$.
The automorphism 
\[\sigma_{22}:(x,y,t)\mapsto(\zeta_{22}^2x,\zeta_{22}^3y,\zeta_{22}^{10}t)\]
is purely non-symplectic of order $22$ since its action on the two form $\frac{dx\wedge dt}{2y}$ is the multiplication 
by $-\zeta_{11}$. The automorphism $\sigma_{22}$ preserves the fibers over $t=0$ and $t=\infty$.
In the fiber over $t=0$, which is of type $II^*$, it must fix the component of multiplicity $6$ and 
has $8$ isolated fixed points in the other components. 
In the fiber over $t=\infty$ it fixes three isolated points.
The involution $\sigma_2$ preserves each fiber of the elliptic fibration, thus
it must fix $R$, three more components of the fiber over $t=0$, the section at infinity and 
the $3$-section $y=0$, which has genus $5$. 
This corresponds to case B1.
\end{example}

\begin{example}\label{B2} (Case B2) Let us consider the elliptic fibration
\[y^2=x^3+t^5x+t^2,\]
The fibration has a fiber of type $IV$ over $t=0$, a fiber of type $III^*$ over $t=\infty$ 
and $11$ fibers of type $I_1$.
The automorphism
\[
\sigma_{22}(x,y,t)=(\zeta_{11}^8x,-\zeta_{11}y,\zeta_{11}t)
\]
is purely non-symplectic of order $22$ since its action on the two form $\frac{dx\wedge dt}{2y}$ is the multiplication 
by $-\zeta_{11}^8$. 
By \cite[Example 7.4]{ArtebaniSartiTaki}, $\sigma_{11}$ has fixed locus $R\cup\{p_1,\ldots,p_{11}\}$, 
where $R$ is the central component of the fiber of type $III^*$.
The involution $\sigma_2$ maps $(x,y,t)$ to $(x,-y,t)$, 
thus it preserves each fiber. This implies that it
fixes $R$ and two more rational components of the fiber of type $III^*$, 
as well as the section at infinity and the $3$-section $y=0$, whose genus is $5$.
This corresponds to case B2.
 \end{example}

\begin{example}\label{B3} (Case B3) We already observed in Section \ref{sec:22} that 
the elliptic K3 surface  defined by
\[y^2=x^3+ax+(t^{11}-1), \ a\in\C^*
\]
with the automorphism  $\sigma_{22}:(x,y,t)\mapsto (x:-y:\zeta_{11}t)$, is an example of case A.
If  $a^3=-\frac{27}4$, 
thus the fibration admits a singular fiber of type $II$ over $t=0$, $I_{11}$ over $t=\infty$ and $11$ fibers of type $I_1$.
The  fixed locus of the automorphism $\sigma_{11}$ is contained in the fibers over $t=0$ and $t=\infty$. 
Since it fixes $11$ isolated points and one rational curve, then it must fix one of the components of the fibre of type $I_{11}$, say $R$,
has $9$ fixed points in the other components of the same fibers and two more fixed points in the fiber of type $II$.
The involution $\sigma_2$ fixes the section at infinity and the curve $y=0$, which has genus $5$.
Moreover, $\sigma_2$ can not preserve each component of the fiber of type $I_{11}$ by \cite[Lemma 4]{ArtebaniSarti4}.
Thus $\sigma_2$ acts on the fiber of type $I_{11}$ as a reflection, without fixed components 
and with a unique invariant component.
This corresponds to case B3.
\end{example}

\begin{remark} 
By Section \ref{moduli} the moduli space of K3 surfaces having a purely non-symplectic automorphism of order $22$ 
whose invariants are as in cases B1, B2 or B3  is $0$-dimensional,  since $\dim(V^\sigma)=1$. 
In fact, since ${\rm rk}\,T(X)=10=\varphi(22)$ and $f^*$ has order $11$ on ${\rm NS}(X)$, then  
it follows from \cite[Theorem 5.9]{brandhorst} that there is a unique K3 surface $X$ which carries the three types 
of non-symplectic automorphisms of order $22$.  Thus the K3 surfaces given in Examples \ref{B1}, \ref{B2} and \ref{B3} 
are isomorphic. \end{remark}

\section{Classification for order 15}\label{class15}
We now provide a classification theorem of 
purely non-symplectic automorphisms $\sigma$ of order $15$ on a K3 surface
according to their fixed locus.  
Observe that, since $\varphi(15)=8$, then $\dim(V^\sigma)\in \{1,2\}$. 
The case when $\dim(V^\sigma)=2$ has  been studied in Section \ref{s2}.

\begin{theorem}\label{thm15}
Let $\sigma$ be a purely non-symplectic automorphism of order $15$ of a complex K3 surface $X$.
Then the invariants $(g_i,k_i,N_i)$ of the fixed locus of $\sigma_i:=\sigma^\frac{15}{i}$, the vector 
$d=(d_{15,}d_{5}, d_3,d_1)$ giving the dimensions of the eigenspaces of $\sigma^*$ in $H^2(X,\mathbb C)$ 
and the N\'eron-Severi lattice of a very general K3 surface carrying an automorphism with such invariants 
are given by one of the rows of Table \ref{tab}. Moreover, all cases in the table exist.
 
\begin{table}[h!]
\begin{tabular}{c|ccc|ccc|ccc|c|c}
&$N_{15}$ & $g_{15}$ & $k_{15}$ & $N_5$ &$g_{5}$ & $k_{5}$ & $N_3$ &$g_{3}$&$k_{3}$ & $d$ & {\rm NS}\\
\hline

A1&5&-&	0 &	1&2&0&2&2&0 & $(2,1,0,2)$ & $U(3)\oplus A_2\oplus A_2$\\
B1&7&-&	0 &	4&1&0&1&4&1 &$(2,0,1,4)$ & $H_5\oplus A_4$\\
B2&7&-&	0&	4&1&0&6&0&2 & $(1,2,0,6)$ & $U\oplus E_6\oplus A_2^{\oplus 3}$\\
B3&4&-&   0 &     4&1&0&0&4&0 & $(2,0,2,2)$ & $H_5\oplus A_4$\\
 D1&10&0&0&	7&1&1&6&0&2 & $(1,1,0,10)$ & $U\oplus E_6\oplus A_2^{\oplus 3}$\\
F3&9&0&	0&	10&0&1& 4&2&2 & $(1,0,2,10)$ & $H_5\oplus A_4\oplus E_8$\\
F7&12&0&0&	10&0&1& 5&2&3 & $(1,0,1,12)$ &$H_5\oplus A_4\oplus E_8$\\
F8&5&-&0&	10&0&1& 2&2&0 & $(1,0,4,6)$ & $H_5\oplus A_4\oplus E_8$\\
\end{tabular}
   \vspace{0.2cm}
\caption{Order $15$}
\label{tab}
 \end{table}

\end{theorem} 
\begin{proof}
According to \cite{ArtebaniSartiTaki}, 
the fixed locus of the cube of $\sigma_{15}$, i.e. $\sigma_5$,
is the union of a smooth curve of genus $g_5$, $k_5$ rational curves and $a_{1,5}+a_{2,5}$ isolated points, 
with $g_5,k_5,a_{1,5},a_{2,5}$ as in one of the lines of Table \ref{order5}.
\begin{table}[h!]
\begin{tabular}{c|cc|cc|c}
&$a_{1,5}$&$a_{2,5}$&$g_5$&$k_5$&$m$\\
\hline
A&1&0&2&0&5\\
B&3&1&1&0&4\\
C&3&1&-&-&4\\
D&5&2&1&1&3\\
E&5&2&0&0&3\\
F&7&3&0&1&2\\
G&9&4&0&2&1\\
\end{tabular}
\vspace{0.2cm}
\caption{Fixed locus of $\sigma_5$} \label{order5}\end{table}

We recall that  $\alpha=\sum_{C\subset\Fix(\sigma_{15})} (1-g(C))$.
In order to find all possibilities for $\Fix(\sigma_{15})$, 
we will look for a solution $a:=(a_{1,15},a_{2,15},\ldots,a_{7,15},\alpha)$ of the holomorphic Lefschetz formula 
compatible with the system of equations \eqref{sistema15}.
 
 \begin{remark}\label{rmk2}
We recall that points of type $A_{4,15}, A_{5,15}$ lie on a curve fixed by $\sigma_5$ 
and not by $\sigma_{15}$.
Thus if $a_{4,15}+a_{5,15}>0$, 
there is at least a curve in $\Fix(\sigma_{5})\backslash\Fix(\sigma_{15})$.
\end{remark}

\begin{remark} \label{genus3}
Observe that by \cite{broughton}, 
a curve of genus 3 does not admit an automorphism of order 5.
Thus if $\Fix(\sigma_3)$ contains a curve of genus $3$, such curve is also fixed by $\sigma_{15}$.
\end{remark}

We now analyze each line of the previous table separately.\\
 
\bbb{ {\bf Case A}:  corresponds to $\chi(\Fix(\sigma_5))=-1$. By equations (\ref{eqpts})
it follows that $a_{2,15}=a_{7,15}=0$. The only solution of  the holomorphic Lefschetz formula 
with this property is $a=(0,0,1,2,2,0,0,0)$. In particular $\chi(\Fix(\sigma_{15}))=5$. 
It follows from equations (\ref{sistema15}) that $d=(2,1,0,2)$. The proof thus follows as  
in the proof of Proposition \ref{prop15}.}

\bbb{{\bf Case B}: corresponds to $\chi(\Fix(\sigma_5))=4$, i.e. $\Fix(\sigma_5)$ is the disjoint 
union of a smooth curve of genus one and $4$ points. By \cite[Example 5.6]{ArtebaniSartiTaki} 
$X$ has an elliptic fibration $\pi:X\to \mathbb P^1$
which can be defined by a Weierstrass equation of the form
\[
y^2=x^3+(t^5+\alpha)x+(t^{10}+\beta t^5+\gamma),\ \alpha,\beta, \gamma \in \mathbb C,
\]
where $\sigma_5(x,y,t)=(x,y,\zeta_5 t)$. The automorphism $\sigma_5$ fixes pointwise the smooth fiber $F_0$
over $t=0$ and leaves invariant the fiber $F_{\infty}$ over $t=\infty$, which contains $4$ fixed points.
This property and the fact that $24-e(F_{\infty})$ must be divisible by $5$, imply that 
 $F_{\infty}$ is of Kodaira type $IV$, i.e. the union of three smooth rational curves intersecting 
transversally at one point. 
Observe that the elliptic fibration $\pi$ is invariant for $\sigma_3$, since the smooth fiber over $t=0$ is invariant 
for $\sigma_3$, and thus the same holds for the associated linear system.
Moreover $\sigma_3$ must preserve all fibers of $\pi$, since otherwise $15$ should divide 
$24-e(F_{\infty})=20$, a contradiction. 
The remaining singular fibers of $\pi$, considering the fact that they are preserved by $\sigma_3$ (thus $J=0$) and that $24-e(F_{\infty})=20$, are either $5$ fibers of type $IV$ or $10$ fibers of type $II$.

By the holomorphic Lefschetz formula 
and equations (\ref{eqpts}) we find that either $a=(0,1,0,0,3,0,0,0)$ or $a=(0,0,0,0,3,3,1,0)$.

If $a=(0,1,0,0,3,0,0,0)$, then  it follows from equations (\ref{sistema15}) that either 
$d=(2,0,2,2)$ or $d=(1,2,1,4)$. The first case has been considered in the proof 
of Proposition \ref{prop15} (case {\bf B3}). 
In the second case by  (\ref{sistema15}) and \cite[Table 1]{ArtebaniSarti}, $\chi_3=9$ and the fixed locus of  $\sigma_3$ contains
at least two curves. We now exclude this case.

The automorphism $\sigma_{15}$ fixes four points: three of them lie on the unique curve $F_0$ 
fixed by $\sigma_5$ and the other one is an isolated fixed point for $\sigma_5$.
By the previous description, it follows that $\sigma_3$ must fix the center of 
the fiber $F_{\infty}$ and permutes the other three fixed points of $\sigma_5$ on it 
(and thus the three components of the fiber $F_{\infty}$).
Moreover, being of types $A_{2,15}$ and $A_{5,15}$, 
the fixed points of $\sigma_{15}$ are all contained in a curve $C$ fixed by $\sigma_3$.
Since $C$ passes through the center of the fiber $F_{\infty}$,
then it is connected and by the Riemann-Hurwitz formula it is the unique fixed curve of 
$\sigma_3$ which is transversal to the fibers of $\pi$. 
On the other hand, $\sigma_3$ can not fix a curve $R$ contained in a fiber of $\pi$,
since the other singular fibers are either of type $II$, or of type $IV$,
and in both cases $R$ would intersect $C$, a contradiction. 
Thus $\sigma_3$ fixes at most one (connected) curve, so that 
the case $d=(1,2,1,4)$ is not possible.

If $a=(0,0,0,0,3,3,1,0)$, then  it follows from equations (\ref{sistema15}) that either 
$d=(2,0,1,4)$ or $d=(1,2,0,6)$. If $d=(2,0,1,4)$, then by (\ref{sistema15}) and \cite[Table 1]{ArtebaniSarti}, 
$\chi_3=-3$ and the fixed locus of $\sigma_3$ consists either of the disjoint 
union of a genus three curve and one point 
or the disjoint union of a  curve of genus four, a rational curve and one point.
The first case is not possible by Remark \ref{genus3}.

If $d=(1,2,0,6)$, then  by (\ref{sistema15}) and \cite[Table 1]{ArtebaniSarti},
$\chi_3=12$ and the fixed locus of $\sigma_3$ consists either of the union of three disjoint 
rational curves and $6$ points or the disjoint union of a curve of genus one, three rational curves and six points.
We now exclude the second case. Observe that in this case $\sigma_{15}$ fixes three points on $F_0$ 
and four isolated points in the fiber $F_{\infty}$. Six of these points are contained in a curve 
fixed by $\sigma_3$, which will intersect each fiber of $\pi$ at three points counting multiplicity.
The same argument as before shows that  $\sigma_3$ can not fix a curve contained in a fiber of $\pi$.
Thus $\sigma_3$ fixes at most three (connected) curves.

To conclude, the only possible cases have $a=(0,0,0,0,3,3,1,0)$ and either $d=(2,0,1,4)$ with $\sigma_3$ fixing a genus four curve, a rational curve and one point (case {\bf B1}), or $d=(1,2,0,6)$ with $\sigma_3$ fixing three smooth rational curves and six points (case {\bf B2}).
}

\bbb{{\bf Case C}:  in this case $\sigma_5$ fixes exactly four points, more precisely
$a_{1,5}=3$ and $a_{2,5}=1$. As before, by the holomorphic Lefschetz formula one obtains that either 
\[a=(0,1,0,0,3,0,0,0) \mbox{ or } a=(0,0,0,0,3,3,1,0).\]
In both cases $a_{4,15}+a_{5,15}>0$, 
thus this case is not possible by Remark \ref{rmk2}.}

\bbb{{\bf Case D}:
in this case
the fixed locus of $\sigma_5$ contains an elliptic curve, a smooth rational curve $R$
and $7$ isolated fixed points, with $a_{1,5}=5, a_{2,5}=2$.
The holomorphic Lefschetz formula with the restrictions of  \eqref{eqpts} gives four solutions 
for the vector $a$:
{\small
\begin{equation}
(0, 0, 0, 0, 3, 3, 1, 0),
    (0, 0, 1, 2, 2, 0, 0, 0),
    (0, 1, 0, 0, 3, 0, 0, 0),
    (3, 2, 2, 3, 0, 0, 0, 1).
\label{DE}
  \end{equation}}
  The only one compatible with equations in \eqref{sistema15} is $a=(3, 2, 2, 3, 0, 0, 0, 1)$.
By Remark \ref{rmk2} a solution with $\alpha=1$ means that only $R$ is fixed by $\sigma_{15}$.
By \eqref{sistema15}, this gives $\chi_3=9$. 
According to \cite[Table 1]{ArtebaniSarti}, there are two possibilities for $\Fix(\sigma_3)$:
\begin{enumerate}[{\bf D1}]
\item  disjoint union of 3 smooth rational curves and 6 points;
\item disjoint union of an elliptic curve, 3 smooth rational curves and 6 points.
\end{enumerate}

We now show that case D2 is not possible. 
Let 
\[
\Fix(\sigma_3)=E\cup R_1\cup R_2\cup R_3\cup\{p_1,p_2,\ldots,p_6\}
\]
and consider the elliptic fibration $\pi:X\to \mathbb P^1$ defined by the linear system $|E|$.
The automorphism $\bar\sigma_3$ induced by $\sigma_3$ on $\P^1$ 
is not the identity, since otherwise $\sigma_3$ should 
act on the general fiber of $\pi$ either as a translation (which is impossible since $\sigma_3$ is non-symplectic) 
or with fixed points (impossible, since otherwise $\sigma_3$ should fix a curve which is transverse to all fibers, 
and thus intersecting $E$).
Thus $\bar\sigma_3$ has order three and fixes two points in $\P^1$, one of them corresponding to the fiber $E$.
The smooth rational curves and the isolated points fixed by $\sigma_3$ must be components of the other invariant fiber.
This implies that such  fiber is of type $I_6^*=\tilde D_{10}$.

Since the curve $E$ is preserved by $\sigma_5$, thus the fibration $\pi$ is preserved too. 
The fixed locus of $\sigma_5$ contains a curve of genus one $E'$. 
The curve $E'$ can not be transverse to the fibers of $\pi$, since otherwise the general fiber 
of $\pi$ would have an order five automorphism with a fixed point, which is impossible by \cite[Corollary 4.7, IV]{Hartshorne}.
Thus $E'$ is one of the fibers of $\pi$.
A similar reasoning to the one used for $\sigma_3$ implies that $\sigma_5$ induces an order $5$ automorphism 
of $\P^1$, thus it preserves exactly two fibers of $\pi$. 
Observe that $\sigma_5$ must preserve both $E$, since it commutes with $\sigma_3$, 
and the fiber of type $I_6^*=\tilde D_{10}$,
since an elliptic fibration of a K3 surface can not have five fibers 
of this type (the Euler number of the fiber is $12$).
This implies that $E=E'$, thus $E$ would be a fixed curve of $\sigma_{15}$, a contradiction.}

\bbb{{\bf Case E}:
as in the previous case,
$a_{1,5}=5, a_{2,5}=2$ and the holomorphic
Lefschetz formula  with the restrictions of \eqref{eqpts} has the four solutions of \eqref{DE}.
Since in each case $a_{4,15}+a_{5,15}>0$, 
then by Remark \ref{rmk2} the only curve fixed by $\sigma_5$ is not fixed by $\sigma_{15}$ and
 $\alpha=0$.
For each one of the three possibles $a$'s with $\alpha=0$, the system \eqref{sistema15} has no solutions.
Thus there are no $\sigma_{15}$ such that $\sigma_5$ has invariants as in case E.}

\bbb{{\bf Case F}:
in this case $\Fix(\sigma_5)$ contains two rational curves $R_1,R_2$ and ten points with $a_{1,5}=7, a_{2,5}=3$. 
The holomorphic Lefschetz formula with the restrictions of \eqref{eqpts} gives nine solutions, all of them with $\alpha=0$ or $1$.
Thus at most one of the two curves $R_i$ is contained in $\Fix(\sigma_{15})$.

If $\Fix(\sigma_{15})$ contains a rational curve, then $\alpha=1$ and combining the nine solutions 
of the Lefschetz formula with \eqref{sistema15} one gets the possibilities F1-F7 of Table \ref{tab:F}.
If $\Fix(\sigma_{15})$ only contains points, then $\alpha=0$ and by \eqref{sistema15} we get possibilities F8 and F9.
 {\small \begin{table}[h!]
\begin{tabular}{c|cc|ccc|cccccccc}
&$a_{1,5}$&$a_{2,5}$&$a_{1,3}$&$g_{3}$&$k_{3}$&$a_{1,15}$&$a_{2,15}$&$a_{3,15}$&$a_{4,15}$&$a_{5,15}$&$a_{6,15}$& $a_{7,15}$&$\alpha$\\
\hline
F1&7&3& 4&0&0& 3&3&1&1&1&0&0&1 \\
F2&7&3& 4&1&1& 3&3&1&1&1&0&0&1\\
F3&7&3& 4&2&2& 3&3&1&1&1&0&0&1 \\
F4&7&3& 4&3&3& 3&3&1&1&1&0&0&1\\
F5&7&3& 5&0&1& 3&2&1&1&1&3&1&1\\
F6&7&3& 5&1&2& 3&2&1&1&1&3&1&1\\
F7&7&3& 5&2&3& 3&2&1&1&1&3&1&1 \\
F8&7&3& 2&2&0& 0&0&1&2&2&0&0&0 \\
F9&7&3& 2&3&1& 0&0&1&2&2&0&0&0 \\
\end{tabular}
\vspace{0.2cm}

\caption{Case F}
\label{tab:F}
 \end{table}
 }
 
 By Remark \ref{genus3} we exclude cases F4 and F9.
 
Case F1 has to be excluded for the following reason:
the total number of fixed points for $\sigma_{15}$ is 9 and $\sigma_{15}$ fixes a rational curve.
Thus, $a_{2,15}+a_{3,15}+a_{5,15}+a_{6,15}=5$ of the isolated fixed points for $\sigma_{15}$ lie on curves fixed by $\sigma_{3}$.
However, $\Fix(\sigma_3)$ contains just one rational curve, which is fixed by $\sigma_{15}$,
giving a contradiction.

Case F2 has to be excluded for the following reason: 
$\sigma_{15}$ acts as an automorphism of order $5$ on the elliptic curve in $\Fix(\sigma_3)$ 
and it contains fixed points, which is not possible by \cite[Corollary 4.7, IV]{Hartshorne}.
Case F6 is analogous.

In case F5, the total number of fixed points for $\sigma_{15}$ is 12:
5 of them are isolated for $\sigma_3$, thus 7 points should lie 
on the rational curve in $\Fix(\sigma_3)\backslash\Fix(\sigma_{15})$. This is not possible by the Riemann-Hurwitz formula.}

\bbb{{\bf Case G}:
in this case $\Fix(\sigma_5)$ contains three rational curves and 
all solutions of the holomorphic Lefschetz formula with the restrictions of \eqref{eqpts} have $\alpha=0$ or $1$.
Thus at most one of the three rational curves in $\Fix(\sigma_5)$ is contained in $\Fix(\sigma_{15})$.
Checking \eqref{sistema15} for all solutions in both cases $\alpha=0,1$ we find no solutions.
Thus there are no possible $\sigma_{15}$ such that $\Fix(\sigma_5)$ is as in case G.}

The N\'eron-Severi lattice of a very general K3 surface in cases A1, B1 and B3 has been given in Section \ref{sec:15}.
In the remaining cases, which have $d_{15}=1$, the rank of the N\'eron-Severi lattice in the very general case is $22-\varphi(8)=14$.
In the cases B2 and D1 we have that the rank of $S(\sigma_3)$ is $d_1+4d_5=14$.  Since $S(\sigma_3)$ is a primitive sublattice of 
${\rm NS}(X)$ by Remark \ref{ns}, we conclude that ${\rm NS}(X)=S(\sigma_{3})$. By \cite{ArtebaniSarti} the lattice $S(\sigma_{3})$ in the two 
cases is isometric to $U\oplus E_6\oplus A_2^{\oplus 3}$. A similar argument in the cases $F_3, F_7, F_8$ shows that 
for a very general $X$ we have ${\rm NS}(X)=S(\sigma_5)$ 
and that it is isometric to $H_5\oplus A_4\oplus E_8$ by \cite{ArtebaniSartiTaki}.
\end{proof}

In the following we will provide Examples for all cases collected in Table \ref{tab}, thus completing the proof of Theorem \ref{thm15}.
Examples of cases A1, B1 and B3 can be found in Section \ref{sec:15}.

\begin{example}\label{B2} (Case B2).
The elliptic K3 surface with Weierstrass equation 
\[
y^2=x^3+(t^5-1)^2
\]
has six fibers of type $IV$, over $t=\infty$ and over the zeroes of $t^5-1$.
It carries the order $15$ automorphism
\[
\sigma_{15}:(x,y,t)\mapsto(\zeta_3x,y,\zeta_{5}t).
\]
The fixed locus of $\sigma_5$ is contained in the union of the smooth fiber over $t=0$ 
and  in the fiber over $t=\infty$.
The fixed locus of $\sigma_3$ contains the section at infinity, the two sections 
defined by $x=y\pm (t^5-1)=0$ and the six centers of the fibers of type $IV$.
\end{example}

\begin{example}\label{D1} (Case D1)
This surface appears in \cite{brandhorst}. Let $X$ be the elliptic K3 surface with Weierstrass equation
\[
y^2=x^3+t^5x+1, 
\] 
The fibration has one fiber of type $III^*=\tilde E_7$ over $t=\infty$ and $15$ fibers of type $I_1$.
It carries the order $15$ automorphism
\[
\sigma_{15}:(x,y,t)\mapsto(\zeta_{15}^{10}x,y,\zeta_{15}t).
\]
The automorphism $\sigma_5=\sigma_{15}^3$ fixes the smooth fiber $E$ over $t=0$,
the smooth rational curve of multiplicity $4$ of the fiber over $t=\infty$ 
and $7$ isolated points  in the same reducible fiber. 
Thus the invariants of $\sigma_5$ are $(g_5,k_5)=(1,1)$, 
which corresponds to case D.
The elliptic curve $E$ is not fixed by $\sigma_3=\sigma_{15}^5:(x,y,t)\mapsto(\zeta_3x,y,\zeta_{3}t)$. 
The automorphism $\sigma_3$ fixes three smooth rational curves and $3$ isolated points in the fiber over $t=\infty$,
and $3$ points in the curve $E$.
\end{example}

 \begin{example}\label{F3} (Case F3) 
Let $Y$ be the double cover of $\P^2$ defined by the following equation in 
$\P(1,1,1,3)$:
\[
y^2=x_2(x_0^2x_1^3+x_2^5+x_0^5).
\]
The branch sextic $B$ is the union of a line $L$ and a quintic curve $Q$.
The surface $Y$ has four rational double points: one point of type $D_7$ at $(0,1,0,0)$ 
and three points of type $A_1$ at $(-\zeta_3^i,1,0,0)$, for $i=0,1,2$.
The minimal resolution of $Y$ is a K3 surface $X$.
The surface has the order $15$ automorphism 
\[
\sigma_{15}:(x_0,x_1,x_2,y)\mapsto(x_0, \zeta_3x_1,\zeta_5x_2,\zeta_5^3y).
\]
We will denote by $\tilde\sigma_{15}$ the lifting of $\sigma_{15}$ to $X$.
The automorphism $\sigma_3$ fixes the genus two curve $C_2$ defined by $x_1=0$ and the singular point $(0,1,0,0)$.
Thus $\tilde\sigma_3$ fixes the proper transform of $C_2$ and the union of two components and four isolated points 
in the exceptional divisor of type $D_7$. Thus we are in case F3.
\end{example}

 \begin{example}\label{F7} (Case F7) 
 Let $Y$ be the double cover of $\P^2$ defined by the following equation in 
$\P(1,1,1,3)$:
\[
y^2=x_2(x_2^5+x_1^5+x_0^3x_1x_2).
\]
The branch sextic $B$ is the union of a line $L$ and a quintic curve $Q$.
The surface $Y$ has a rational double point of type $D_{10}$ at $(1,0,0,0)$. 
The minimal resolution of $Y$ is a K3 surface $X$.
The surface has the order $15$ automorphism 
\[
\sigma_{15}:(x_0,x_1,x_2,y)\mapsto (\zeta_{5}^2x_0,\zeta_{15}^7x_1,\zeta_3^2x_2,y).
\] 
We will denote by $\tilde\sigma_{15}$ the lifting of $\sigma_{15}$ to $X$.
The automorphism $\sigma_3$ fixes the genus two curve $C_2$ defined by $x_0=0$ and the point $(1,0,0,0)$.
Thus $\tilde\sigma_3$ fixes the proper transform of $C_2$ and the union of three components and five isolated points 
in the exceptional divisor of type $D_{10}$. Thus we are in case F7.
\end{example}

 \begin{example}\label{F8} (Case F8) 
Let $Y$ be the double cover of $\P^2$ defined by the following equation in 
$\P(1,1,1,3)$:
\[
y^2=x_0^5x_1+(x_1^3-x_2^3)^2.
\]
The surface $Y$ has three rational double points of type $A_4$ at $(0,1,\zeta_3^i,0)$, with $i=0,1,2$.
The minimal resolution of $Y$ is a K3 surface $X$.
The surface has the order $15$ automorphism 
\[
\sigma_{15}:(x_0,x_1,x_2,y)\mapsto (\zeta_5x_0,x_1,\zeta_3x_2,y).
\]
We will denote by $\tilde\sigma_{15}$ the lifting of $\sigma_{15}$ to $X$.
The automorphism $\sigma_3$ fixes the genus two curve $C_2$ defined by $x_2=0$ 
and the smooth points $(0,0,1,\pm 1)$. Thus we are either in case F8 or in case A1.
The automorphism $\sigma_5$ fixes the two smooth rational curves defined by 
$x_0=y\pm (x_1^3-x_2^3)=0$ and the point $(1,0,0,0)$.
Thus its lifting $\tilde\sigma_5$ fixes two smooth rational curves, so we are in case F8.
\end{example}

\begin{remark} 
By Section \ref{moduli} the moduli space of K3 surfaces having a purely non-symplectic automorphism of order $15$ 
whose invariants are as in cases B2, D1, F3, F7  or F8  is $0$-dimensional,  since $\dim(V^\sigma)=1$. 
In cases B2 and D1 the isometry $f^*$ has order $5$ on ${\rm NS}(X)$, while in cases $F3, F7, F8$ it has order $3$.
Moreover, in all cases ${\rm rk}\,T(X)=8=\varphi(15)$. 
It follows from \cite[Theorem 5.9]{brandhorst} that there is a unique K3 surface $X$ which carries purely 
non-symplectic automorphisms of order $22$ of types B2 and D1, and a unique K3 surface carrying automorphisms
of types F3, F7, F8. Thus the K3 surfaces given in Examples \ref{B2} and \ref{D1} are isomorphic,  and the same is true for 
the K3 surfaces given in Examples \ref{F3}, \ref{F7}, \ref{F8}. 
\end{remark}


\section*{Acknowledgements}
We warmly thank  Alice Garbagnati  for her valuable suggestions and for sharing 
with us a private note on non-symplectic automorphisms of K3 surfaces. 
We also thank Antonio Laface and Alessandra Sarti for several interesting discussions.
Finally, we are grateful to the anonymous referees for pointing out a mistake (a missing component) in a previous version of the paper 
and for suggesting how to improve the presentation.

The first and last author have been partially 
supported by Proyecto FONDECYT Regular 
N. 1160897 and N. 1211708, the second author has been partially supported by  
Proyecto FONDECYT Iniciaci\'on en Investigaci\'on N. 11190428,
all authors have been supported by  
Proyecto Anillo ACT 1415 PIA CONICYT.

\bibliographystyle{plain}
\bibliography{biblio22}


\end{document}